\newtheorem{theorem}{Theorem}
\newtheorem{lemma}[theorem]{Lemma}
\newtheorem{definition}[theorem]{Definition}
\theoremstyle{remark}\newtheorem{remark}[theorem]{Remark}
\newtheorem{proposition}[theorem]{Proposition}
\newcommand{\R}{{\mathbb R}}
\newcommand{\Z}{{\mathbb Z}}
\newcommand{\C}{{\mathbb C}}
\newcommand{\T}{{\mathbb T}}
\begin{document}

{\let\thefootnote\relax\footnote{Date: March 12th,  2019. 

\textcopyright 2019 by the authors. Faithful reproduction of this article, in its entirety, by any means is permitted for noncommercial purposes.}}

\title{Knocking out teeth in one-dimensional periodic NLS. }

\author{L. Chaichenets}
\address{leonid chaichenets, department of mathematics, institute for analysis, karlsruhe institute of technology, 76128 karlsruhe, germany }
\email{leonid.chaichenets@kit.edu}

\author{D. Hundertmark}
\address{Dirk Hundertmark, department of mathematics, institute for analysis, karlsruhe institute of technology, 76128 karlsruhe, Germany, and Department of Mathematics
Altgeld Hall,
1409 W.\ Green Street
Urbana, IL 61801, USA }
\email{dirk.hundertmark@kit.edu}

\author{P. Kunstmann}
\address{peer kunstmann, department of mathematics, institute for analysis, karlsruhe institute of technology, 76128 karlsruhe, germany }
\email{peer.kunstmann@kit.edu}

\author{N. Pattakos}
\address{nikolaos pattakos, department of mathematics, institute for analysis, karlsruhe institute of technology, 76128 karlsruhe, germany }
\email{nikolaos.pattakos@kit.edu}

\begin{abstract}
{We show the existence of weak solutions in the extended sense of the Cauchy problem for the cubic nonlinear Schr\"odinger equation in one dimension with initial data $u_{0}$ in $H^{s_{1}}(\R)+H^{s_{2}}(\T), 0\leq s_{1}\leq s_{2}.$ In addition, we show that if $u_{0}\in H^{s}(\R)+H^{\frac12+\epsilon}(\T)$ where $\epsilon>0$ and $\frac16\leq s\leq\frac12$ the solution is unique in $H^{s}(\R)+H^{\frac12+\epsilon}(\T).$ Our main tool is a normal form type reduction via the use of the differentiation by parts technique.}
\end{abstract}

\maketitle
\pagestyle {myheadings}

\begin{section}{introduction and main results}
We are interested in the equation
\begin{equation}
\label{maineq}
\begin{cases} iu_{t}-u_{xx}\pm|u|^{2}u=0 &,\ (t,x)\in\mathbb R^{2}\\
u(0,x)=u_{0}(x) &,\ x\in\mathbb R\\
\end{cases}
\end{equation}
with initial data $u_{0}\in H^{s}(\R)+H^{s}(\T)$ for $s\geq0,$ where $\T\coloneqq\R/\Z$ is the one-dimensional torus, that is, the circle. The Sobolev $H^{s}$ spaces are defined as
\begin{equation}
\label{spacon}
H^{s}(\R)\coloneqq\{f\in L^{2}(\R)\ /\ \|f\|_{H^{s}(\R)}\coloneqq\Big(\int_{\R}(1+|\xi|^{2})^{s}|\hat{f}(\xi)|^{2}d\xi\Big)^{\frac12}<\infty\}
\end{equation}
and 
\begin{equation}
\label{spaper}
H^{s}(\T)\coloneqq\{f\in L^{2}(\T)\ /\ \|f\|_{H^{s}(\T)}\coloneqq\Big(\sum_{n\in\Z}(1+|n|^{2})^{s}|f_{n}|^{2}\Big)^{\frac12}<\infty\},
\end{equation}
and we will use $\langle k\rangle\coloneqq(1+|k|^{2})^{\frac{1}{2}}$ for the so--called Japanese bracket. $S(\R)$ is the Schwartz class,  $D(\T)=C^{\infty}(\T)$,  
$S'(\R)$ the tempered distributions, and  $D'(\T)$ the distributions on the torus $\T$. The Fourier transform of a function $f:\R\rightarrow\C$ is given by 
\begin{equation}
\label{four}
\hat{f}(\xi)(=\mathcal F(f)(\xi))\coloneqq\int_{\R}e^{-2\pi i\xi x}f(x)dx,\ \xi\in\R,
\end{equation}
and the Fourier coefficients of a periodic function $f:\T\rightarrow\C$ are
\begin{equation}
\label{fourper}
f_{n}\coloneqq\int_{0}^{1}e^{-2\pi i nx}f(x)dx,\ n\in\Z.
\end{equation}
In \cite{Tsu} it was proved that NLS \eqref{maineq} is locally wellposed in $L^{2}(\R)$ with guaranteed time of 
existence depending only on the $L^{2}(\R)$ norm of the initial data and since this is a conserved quantity, $\|u(t,\cdot)\|_{L^{2}(\R)}=\|u_{0}\|_{L^{2}(\R)}$ for all $t\in\R,$ it follows that the NLS \eqref{maineq} is globally wellposed in $L^{2}(\R)$. 
In \cite{Bou} it was proved that the NLS \eqref{maineq} is locally wellposed in $L^{2}(\T)$ and again by the $L^{2}(\T)$ conservation law it follows that it is globally wellposed in $L^{2}(\T)$. In \cite{GKO} the NLS \eqref{maineq} was studied for initial conditions $u_{0}\in H^{s}(\T)$ 
and in \cite{KOYY} and \cite{HY} for $u_{0}\in H^{s}(\R), s\geq0$.  
In both papers unconditional well--posedness was proved for $s\geq\frac16,$ that is uniqueness of solutions in $C([0,T],H^{s}(\T))$ (and $C([0,T],H^{s}(\R))$ respectively) without intersecting with any auxiliary function space (see \cite{TK} where this notion first appeared). 
They used a normal form reduction via the differentiation by parts technique which was originally introduced in \cite{BIT} in the study of the KdV equation for periodic initial data. 
We also refer to \cite{NP} where the last author introduced a different approach to the normal form reduction for the NLS \eqref{maineq} on $\R$ which follows closely what is done in the periodic case and is well suited for modulation spaces defined in equation \eqref{def}. 
See also \cite{CHKP} for the case of general modulation spaces. 
For textbook accounts on these type of results we refer to 
\cite{LP, Tao}, to \cite{SuSu} for a slightly more applied point of view, and, in particular, \cite{ET} for a nice discussion of the differentiation by parts technique.

Here we make the differentiation by parts approach work in a \emph{hybrid case}, namely the case where the initial data $u_{0}$ is the sum of a periodic function $w_{0}$ on $\R$ and an $L^{2}(\R)$ function $v_{0}$. A tooth, as referred to in the title of this paper, is, for example, $w_0$ restricted to one period. We think of the addition of $v_0$ to $w_0$ as eliminating, or knocking out, finitely many of these teeth in the underlying periodic signal. 

Our work is motivated by high--speed optical fiber 
communications, where in a certain approximation the behavior of 
pulses in glass--fiber cables is described by a NLS.  
A periodic signal is the simplest type of a non-decaying signal, encoding, for example, an infinite string of ones 
if there is exactly one tooth per period. However, such 
a purely periodic signal carries no information. One would like to be able to change it, at least locally.   
This leads necessarily to a hybrid formulation of the NLS where the 
signal is the sum of a periodic and a localized part. 
The localized part being able to knock out, i.e., remove, one 
or more of the teeth in the underlying periodic signal. This way one can model, 
for example, a signal consisting of two infinite blocks of ones which are separated by a single zero, or even far more complicated patterns. 
An interesting question then naturally arises:  Can the missing teeth regrow, which means that the original signal gets distorted (in optics this phenomenon is known as \emph{ghost pulses}, see e.g. \cite{MM} or \cite{ZM}).  Is there an optimal choice of a periodic signal, which makes this distortion very weak or even  impossible?

From a mathematics point of view, in order to be able to address 
these type of questions, one should have first solved the corresponding 
local existence and uniqueness problems, which is 
the main purpose of this work: We solve 
the local existence problem and provide an unconditional 
uniqueness result.  Since the underlying periodic signal can also 
be the constant function, we also cover the case of so--called 
dark solitons, that is, NLS with a non-zero boundary conditions 
at infinity, where the signals are of the form $u= c+v$ with $c$ 
a constant, see \cite{KFC-G} and \cite{KL-D} for a review on 
dark solitons from 
a point of view of applied mathematics and physics. 

Our solution of NLS \eqref{maineq} with initial data $u_{0}=v_{0}+w_{0}\in H^{s_{1}}(\R)+H^{s_{2}}(\T)$ will be constructed as the sum of the solutions of the following partial differential equations
\begin{equation}
\label{perpe}
\begin{cases} iw_{t}-w_{xx}\pm|w|^{2}w=0 &,\ (t,x)\in\R\times\T\\
w(0,x)=w_{0}(x)\in H^{s_{2}}(\T) &,\ x\in\mathbb T \, ,\\
\end{cases}
\end{equation}
which is the periodic cubic NLS on the real line, and the modified cubic NLS
\begin{equation}
\label{nonperpe}
\begin{cases} iv_{t}-v_{xx}\pm G(w,v)=0 &,\ (t,x)\in\R\times\R\\
v(0,x)=v_{0}(x)\in H^{s_{1}}(\R) &,\ x\in\mathbb R\, ,\\
\end{cases}
\end{equation}
where $G(w,v)$ is the nonlinearity
\begin{equation}
\label{nonli}
G(w,v)=|w+v|^{2}(w+v)-|w|^{2}w=|v|^{2}v+v^{2}\bar{w}+w^{2}\bar{v}+2w|v|^{2}+2v|w|^{2}.
\end{equation}

In order to give a meaning to solutions of NLS \eqref{perpe} in $C([0,T], H^{s}(\T))$ and NLS \eqref{nonperpe} in $C([0,T], H^{\tilde{s}}(\R)), s, \tilde{s}\in\R$ 
and to the nonlinearities $\mathcal N(w)\coloneqq w|w|^{2}$ and $G(w,v)$ we need the following definitions, 
which first appeared in \cite{C1} for the periodic NLS. 

\begin{definition}
\label{def1}
A sequence of Fourier cutoff operators is a sequence of Fourier multiplier operators $\{T_{N}\}_{N\in \mathbb N}$ with multipliers $m_{N}:\R\to\C$ such that
\begin{itemize}
\item $m_{N}$ has compact support on $\R$ for every $N\in \mathbb N$,
\item $m_{N}$ is uniformly bounded,
\item $\lim_{N\to\infty}m_{N}(x)=1$, for any $x\in\R$. 
\end{itemize}
\end{definition}

\begin{definition}[Periodic case]
\label{def5}
Let $w\in C([0,T], H^{s}(\T))$. We say that $\mathcal N(w)$ exists and is equal to a distribution $\tilde{w}\in\mathcal [C^{\infty}((0,T),D(\T))]'$ if, for every sequence $\{T_{N}\}_{N\in\mathbb N}$ of Fourier cutoff operators, we have
\begin{equation}
\label{wknows5}
\lim_{N\to\infty}\mathcal N(T_{N}w)=\tilde{w},
\end{equation}
in the sense of distributions on $(0,T)\times\T$.
\end{definition}

\begin{definition}[Periodic case]
\label{def6} Let $r\ge 0$. 
We say that $w\in C([0,T], H^{r}(\T))$ is a weak solution in the extended sense of the NLS \eqref{perpe} if 
\begin{itemize}
\item $w(0,x)=w_{0}(x)$,
\item the nonlinearity $\mathcal N(w)$ exists in the sense of Definition \ref{def5},
\item $w$ satisfies \eqref{perpe} in the sense of distributions on $(0,T)\times\T$, where the nonlinearity $\mathcal N(w)=w|w|^{2}$ is interpreted as above.
\end{itemize}
\end{definition}
For a fixed such solution $w$ of equation  \eqref{perpe}, in the sense of Definition \ref{def6},  
we define a solution $v$ of equation \eqref{nonperpe} as 

\begin{definition}[Continuous case]
\label{def2} Let $s\ge 0$ and $v\in C([0,T], H^{{s}}(\R))$. 
 We say that $G(w,v)$ exists and is equal to a distribution $\tilde{v}\in\mathcal [C^{\infty}((0,T),S(\R)]'$ if, for every sequence $\{T_{N}\}_{N\in\mathbb N}$ of Fourier cutoff operators, we have
\begin{equation}
\label{wknows}
\lim_{N\to\infty}G(T_{N}w,T_{N}v)=\tilde{v},
\end{equation}
in the sense of distributions on $(0,T)\times\R$.
\end{definition}
Similarly to the periodic case, we also introduce
\begin{definition}[Continuous case]
\label{def3}
We say that $v\in C([0,T], H^{s}(\R))$ is a weak solution in the extended sense of NLS \eqref{nonperpe} if 
\begin{itemize}
\item $v(0,x)=v_{0}(x)$,
\item the nonlinearity $G(w,v)$ exists in the sense of Definition \ref{def2},
\item $v$ satisfies \eqref{nonperpe} in the sense of distributions on $(0,T)\times\R$, where the nonlinearity $G(w,v)$ is interpreted as above.
\end{itemize}
\end{definition}
The main results of the paper are the following

\begin{theorem}[Local existence and well--posedness]
\label{th1}
Let $0\leq s_{1}\leq s_{2}$ and $u_{0}=v_{0}+w_{0}\in H^{s_{1}}(\R)+H^{s_{2}}(\T)$. There exists a weak solution in the extended sense $u=v+w\in C([0,T], H^{s_{1}}(\R))+C([0,T],H^{s_{2}}(\T))$ of NLS \eqref{maineq} with initial condition $u_{0}$ where $w$ solves NLS \eqref{perpe} in the sense of Definition \ref{def6}, $v$ solves NLS \eqref{nonperpe} in the sense of Definition \ref{def3} and the time $T$ of existence depends only on $\|v_{0}\|_{H^{s_{1}}(\R)}, \|w_{0}\|_{H^{s_{2}}(\T)}$. 

Moreover, the solution map is locally Lipschitz continuous. 
\end{theorem}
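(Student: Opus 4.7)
The plan is to produce $u=v+w$ by solving the two decoupled equations \eqref{perpe} and \eqref{nonperpe} in succession. \textbf{Step one} is to solve the purely periodic problem \eqref{perpe}: this is handled by invoking the known well--posedness of the cubic NLS on the torus at the level of $H^{s_2}(\T)$, $s_2\geq 0$ (\cite{Bou,GKO}), which produces a weak solution in the extended sense $w\in C([0,T_1],H^{s_2}(\T))$ with $T_1$ depending only on $\|w_0\|_{H^{s_2}(\T)}$ and with Lipschitz dependence on $w_0$.

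\textbf{Step two} is the bulk of the work: with $w$ fixed, I would solve \eqref{nonperpe} for $v$ by a Banach fixed point argument for the Duhamel map
\[
v(t) = e^{-it\partial_x^2}v_0 \mp i \int_0^t e^{-i(t-\tau)\partial_x^2} G(w(\tau),v(\tau))\, d\tau
\]
on a ball of $C([0,T_2],H^{s_1}(\R))$. The five trilinear pieces of $G(w,v)$ expand on the Fourier side into integrals over $\R$ (from $v$-factors) mixed with sums over $\Z$ (from $w$-factors). To make sense of these products in the extended sense of Definition \ref{def2} and to close the contraction at low regularity, I would apply the differentiation by parts / normal form technique as in \cite{BIT,GKO,KOYY,HY,NP}: for each of the five terms, split frequency space into the resonant region where the oscillatory phase arising from the linear Schr\"odinger group vanishes (or is small) and its non-resonant complement, and on the non-resonant part integrate by parts in $\tau$ to trade one factor of the integrand for the inverse phase. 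This converts the Duhamel integrand into boundary terms plus new Duhamel integrals with milder trilinear operators, and simultaneously yields convergence of $G(T_N w, T_N v)$ for every sequence of Fourier cutoffs in the sense of Definition \ref{def2}.

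The principal obstacle is proving the ensuing multilinear estimates in the mixed continuous-discrete setting. The interactions in which two $w$-frequencies (integers) couple to one $v$-frequency (real), and conversely, give rise to resonant sets which are unions of integer shifts of level curves inside $\R^2$, so the lattice-point counts familiar from the periodic theory must be upgraded to handle the shifted continuous variables while keeping the bounds polynomial in $\|w\|_{C_tH^{s_2}(\T)}$ and $\|v\|_{C_tH^{s_1}(\R)}$ and carrying a positive power of $T$. Once such estimates are established, they immediately deliver a contraction on a suitable ball, hence a solution $v\in C([0,T_2],H^{s_1}(\R))$ with $T_2$ depending only on $\|v_0\|_{H^{s_1}(\R)}$ and $\|w_0\|_{H^{s_2}(\T)}$; applied to the difference of two solutions with distinct initial data (and using the Lipschitz dependence of $w$ on $w_0$ from step one as input), the very same estimates yield the local Lipschitz continuity of the full solution map $(v_0,w_0)\mapsto v+w$.
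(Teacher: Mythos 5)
Your overall architecture---solve \eqref{perpe} first via the known periodic theory, then treat \eqref{nonperpe} with $w$ frozen using the differentiation-by-parts/normal-form machinery---matches the paper's, and your identification of the mixed continuous--discrete multilinear estimates as the technical heart is accurate. But there is a genuine gap in Step two: you cannot simply run a Banach fixed-point argument for the Duhamel map and ``close the contraction at low regularity'' by differentiation by parts, because at $s_1=0$ the raw Duhamel integrand $G(w,v)$ is not a well-defined function, and the integration by parts in $\tau$ which trades the phase for inverse powers uses $\partial_t \hat v_n$, which a generic element of your fixed-point ball $C([0,T],L^2(\R))$ does not possess. Even if you replace the Duhamel map by the normal-form-reduced map $\Gamma_{v_0}$ (which is well defined on $C([0,T],H^{s_1}(\R))$ and Lipschitz---the paper establishes this), a fixed point of $\Gamma_{v_0}$ is not, by itself, a weak solution in the extended sense of Definition \ref{def3}: nothing in the fixed-point equation tells you that $\lim_{N\to\infty} G(T_N w, T_N v)$ exists and gives the required distribution.

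The paper closes this gap by \emph{not} doing a direct contraction at low regularity. Instead, it first proves local wellposedness for smooth data $(s_1>1,\ s_2=s_1+1)$ via a straightforward Banach algebra contraction (Lemma \ref{smoothsol}), where every manipulation in the normal-form reduction (Sections \ref{firststeps}--\ref{treenotation}) is rigorous, so that the smooth solution $v^{(m)}$ satisfies the reduced equation $v^{(m)}=\Gamma_{v_0^{(m)}}v^{(m)}$ exactly. It then approximates $(v_0,w_0)$ by smooth data, uses the uniform Lipschitz estimate \eqref{argg10} coming from the normal-form bounds to show $\{v^{(m)}\}$ is Cauchy in $C([0,T_0],H^{s_1}(\R))$ with a time $T_0$ depending only on the norms of the data, and finally proves (Proposition \ref{argg12}) that the limit is a weak solution in the extended sense---this last step crucially exploits that the limit is obtained from smooth solutions, by interpolating the three differences $G(W^{(m)},V^{(m)})$, $G(T_NW^{(m)},T_NV^{(m)})$, $G(T_NW,T_NV)$. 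Your proposal omits both the smooth-data lemma and the approximation argument, and these are precisely what make the normal-form reduction legitimate and what deliver Definition \ref{def3} for the constructed $v$.
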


\begin{theorem}[Unconditional uniqueness]
\label{finth2}
Let $\epsilon>0$ and $\frac16\leq s\leq\frac12$. For any initial condition $u_{0}\in H^{s}(\R)+H^{\frac12+\epsilon}(\T)$ the solution $u=v+w$ constructed in Theorem \ref{th1} is unique in $C([0,T],H^{s}(\R)+H^{\frac12+\epsilon}(\T))$.
\end{theorem}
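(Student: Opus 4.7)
First observe that the decomposition of an element of $H^s(\R)+H^{\frac12+\epsilon}(\T)$ as $v+w$ with $v\in H^s(\R)$ and $w\in H^{\frac12+\epsilon}(\T)$ is unique, because any periodic function that also lies in $L^2(\R)$ must vanish, so $H^s(\R)\cap H^{\frac12+\epsilon}(\T)=\{0\}$. Consequently, given two extended weak solutions $u^{(i)}=v^{(i)}+w^{(i)}$ ($i=1,2$) sharing the initial datum $u_0=v_0+w_0$, one has $u^{(1)}=u^{(2)}$ on $[0,T]$ if and only if $w^{(1)}=w^{(2)}$ in $C([0,T],H^{\frac12+\epsilon}(\T))$ \emph{and} $v^{(1)}=v^{(2)}$ in $C([0,T],H^s(\R))$. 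The periodic parts $w^{(i)}$ both solve \eqref{perpe} in the sense of Definition \ref{def6} with common data $w_0\in H^{\frac12+\epsilon}(\T)\subset H^{\frac16}(\T)$, so the unconditional uniqueness theorem of \cite{GKO} (compare \cite{KOYY, HY}) for the cubic periodic NLS at regularity $\geq\frac16$ immediately yields $w^{(1)}=w^{(2)}=:w$.

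The remaining task is unconditional uniqueness for the modified equation \eqref{nonperpe} with this common coefficient $w$. The plan is to adapt the differentiation-by-parts normal form reduction of \cite{BIT, GKO, KOYY, HY, NP} to the hybrid nonlinearity
\[
G(w,v)=|v|^2v+v^2\bar{w}+w^2\bar{v}+2w|v|^2+2v|w|^2.
\]
Setting $\delta=v^{(1)}-v^{(2)}$ and passing to the interaction representation $\Delta(t):=e^{-it\partial_{x}^{2}}\delta(t)$, one writes Duhamel's formula for $\Delta$ and iterates. At each step the arising trilinear expressions are split according to the size of the phase $\Phi=\xi^{2}-\xi_{1}^{2}+\xi_{2}^{2}-\xi_{3}^{2}$: on the non-resonant region, integration by parts in $t$ against $e^{it\Phi}$ produces a factor $\Phi^{-1}$ together with boundary terms and higher-order multilinear remainders; on the resonant region, $\Phi$ is bounded and the expression is estimated directly. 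The pure $|\delta|^{2}\delta$-type term and the mixed terms carrying one or two $w$-factors are then controlled by standard trilinear estimates, yielding a Gronwall-type bound that forces $\delta\equiv 0$ on $[0,T]$ provided $s\geq\frac16$.

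The new feature compared to the purely periodic or purely continuous setting lies in the mixed part of $G$, whose frequency support mixes the integer lattice (from $w$) with the real line (from $v$). Since $\frac12+\epsilon>\frac12$, the embedding $H^{\frac12+\epsilon}(\T)\hookrightarrow L^{\infty}$ makes $w$, $|w|^{2}$ and $w^{2}$ uniformly bounded coefficients in time and space, so the mixed-term bounds essentially reduce to bilinear or linear estimates on $\delta$, at the cost of at most $\epsilon$ derivatives on the $w$-factors. The main obstacle I anticipate is organising the iterated normal form expansion in the hybrid case: tracking which branches of the resulting nested trees carry integer frequencies and which carry continuous ones, and checking at every node that the non-resonant gain $\Phi^{-1}$ together with the extra regularity of $w$ suffices to sum the series in $H^{s}(\R)$. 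The threshold $s\geq\frac16$ will arise, as in the reference papers, from the worst purely-$v$ resonant interaction, while the additional $\epsilon$ on the periodic part is precisely what is needed to invert the phase in the hybrid near-resonances where two periodic modes almost cancel against a continuous one.
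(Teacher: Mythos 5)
Your proposal follows essentially the same route as the paper: reduce to uniqueness of the periodic part $w$ (cited from \cite{GKO}) and then apply the differentiation-by-parts/normal-form tree expansion to the modified equation \eqref{nonperpe}, using the embeddings $H^s(\R)\hookrightarrow L^3(\R)$ and $H^{\frac12+\epsilon}(\T)\hookrightarrow L^\infty(\T)$ to make sense of the hybrid nonlinearity, exactly as the paper does. One small organizational difference worth noting: the paper does not iterate the Duhamel formula for the difference $\delta=v^{(1)}-v^{(2)}$ directly; instead it first justifies (Lemmata \ref{didi} and \ref{finafinafina}, together with the $C^1$-in-time regularity of $v_n$ provided by $G(w,v)\in L^1+L^2$) that each weak solution $v^{(i)}$ by itself satisfies the full normal-form identity $v^{(i)}=\Gamma_{v_0}v^{(i)}$, and only then uses the already-proven Lipschitz estimate for $\Gamma_{v_0}$ to conclude $v^{(1)}=v^{(2)}$; this keeps the multilinear structure cleaner than carrying the cross terms $|v^{(1)}|^2\delta$, $v^{(1)}\bar\delta v^{(2)}$, etc.\ through the tree expansion. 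Also, your heuristic that the extra $\epsilon$ is what inverts the phase in hybrid near-resonances is not quite what the paper uses — the $\epsilon$ is needed purely for the $L^\infty(\T)$ embedding that makes $G(w,v)$ a bona fide function; the phase factorization works the same for integer and continuous frequencies.
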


\begin{remark}
\label{ante}
The result of Theorem \ref{finth2} is also true for $s>\frac12$, but in this case the spaces $H^{s}(\R)$ and $H^{\frac12+\epsilon}(\T)$ embed continuously into $L^\infty(\R),$ thus also their sum. Hence $H^{s}(\R) + H^{\frac12+\epsilon}(\T)$ is a Banach algebra and existence and uniqueness results become much easier with the help of straightforward direct Banach contraction mapping  arguments.
The condition $s\geq\frac16$ guarantees that $v\in H^{s}(\R)\hookrightarrow L^{3}(\R)$ which means that $|v|^{2}v\in L^{1}(\R)$ and together with $ H^{\frac12+\epsilon}(\T)\hookrightarrow L^{\infty}(\T)$, 
allows us to control non--linear interaction terms 
which pair $v$ and $w$ together. 
For example, integrals of the form $\int w^{2}\bar{v}$ and $\int v^{2}\bar{w}$ which appear naturally due to the nonlinearity $G(w,v)$. 
\end{remark}

\begin{remark}
The unconditional uniqueness of NLS \eqref{maineq} with initial data in $H^{s}(\R)$ for $s\geq\frac16$ was first proved by Kato in \cite{TK}. 
\end{remark}
For the proof of Theorem \ref{th1} we will need to localise our functions on the Fourier side and this is achieved through the box operators that are defined as follows: Let $Q_{0}=[-\frac12, \frac12)$ and its translations $Q_{k}=Q_{0}+k$ for all $k\in\mathbb Z$. Consider a partition of unity $\{\sigma_{k}=\sigma_{0}(\cdot-k)\}_{k\in\mathbb Z}\subset C^{\infty}(\mathbb R)$ satisfying 

\begin{itemize}
\item
$ \exists c > 0: \,
\forall \eta \in Q_{0}: \,
|\sigma_{0}(\eta)| \geq c$,
\item
$
\mbox{supp}(\sigma_{0}) \subseteq \{\xi\in\R:|\xi|<1\}$.
\end{itemize}
Note that this implies $1=\sigma_0(0)=\sigma_k(k)$ for all $k\in\Z$. 
Given a partition of unity as above, we define the isometric decomposition operators (box operators)
\begin{equation}
\label{iso}
\Box_{k} \coloneqq \mathcal F^{(-1)} \sigma_{k} \mathcal F, \quad
\left(\forall k \in \Z \right).
\end{equation}
It is not difficult to see that for $1\leq p_{1}\leq p_{2}\leq\infty$ the following holds
\begin{equation}
\label{Bern1}
\|\Box_{k}f\|_{p_{2}}\lesssim\|\Box_{k}f\|_{p_{1}},
\end{equation}
where the implicit constant is independent of $k$ and the function $f$. Having the box operators we may define the modulation spaces $M^{s}_{p,q}(\R), s\in\R, 1\leq p,q\leq\infty$ as 
\begin{equation}
\label{def}
M^{s}_{p,q}(\R)\coloneqq\{f\in S'(\R)\ /\ \|f\|_{M^{s}_{p,q}}\coloneqq\Big(\sum_{k\in\mathbb Z}\langle k\rangle^{sq}\|\Box_{k}f\|_{p}^{q}\Big)^{\frac1{q}}<\infty\},
\end{equation}
with the usual interpretation when the index $q$ is equal to infinity. It can be proved that different choices of the function $\sigma_{0}$ lead to equivalent norms in $M^{s}_{p,q}(\mathbb R)$. 
When $s=0$ we denote the space $M^{0}_{p,q}(\mathbb R)$ by $M_{p,q}(\mathbb R)$. 
In the special case where $p=q=2$ we have $M_{2,2}^{s}(\R)=H^{s}(\R)$.  The usual Sobolev spaces as in \eqref{spacon}. Modulation spaces were introduced by Feichtinger in \cite{FEI}. 
In \cite{CHKP} and \cite{NP} the NLS \eqref{maineq} was studied with initial data $u_{0}\in M^{s}_{p,q}(\R)$ and under the restrictions $s\in[0,\infty), q\in[1,2]$ and $p\in[2,\frac{10q'}{q'+6}),$ existence of weak solutions in the extended sense was proved. 
Moreover, under the extra assumption that $M^{s}_{p,q}(\R)\hookrightarrow L^{3}(\R)$ unconditional well--posedness of the Cauchy problem was shown to be true. 
Unfortunately, the space $M_{\infty,2}(\R)$ is not included in the previously mentioned family of modulation spaces. 
Nevertheless, we are able to obtain an existence result (and uniqueness of solutions under some extra assumptions) for initial data $u_{0}$ in its subspace $H^{s}(\R)+H^{s}(\T)\subset M_{\infty,2}(\R)$ for $s\geq0$.

\subsection{Preliminaries} 
The following lemma will be needed in the proof of Theorem \ref{th1}. It is a straightforward consequence of Young's inequality.

\begin{lemma}
\label{Bern}
Let $1\leq p\leq \infty$ and $\sigma \in C^{\infty}_{c}(\R)$. Then the multiplier operator $T_\sigma: S'(\R) \to S'(\R)$ defined by
\begin{equation*}
(T_\sigma f) = \mathcal F^{-1}(\sigma \cdot \hat{f}), \quad
\forall f \in S'(\R)
\end{equation*}
is bounded on $L^p(\R)$ and
\begin{equation*}
\|T_{\sigma}\|_{L^p(\R)\to L^p(\R)} \lesssim\|\check{\sigma}\|_{L^{1}(\R)}.
\end{equation*}
\end{lemma}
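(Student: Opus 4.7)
The plan is to identify the multiplier operator $T_\sigma$ with convolution by $\check{\sigma}$ and then conclude by Young's inequality. Since $\sigma\in C_c^\infty(\R)\subset S(\R)$, its inverse Fourier transform $\check{\sigma}$ lies in $S(\R)$ and in particular in $L^1(\R)$, so the quantity $\|\check{\sigma}\|_{L^1(\R)}$ appearing on the right-hand side is finite and the estimate is meaningful.

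The key step is the identity $T_\sigma f=\check{\sigma}\ast f$ for every $f\in L^p(\R)$. For $f\in S(\R)$ this is immediate from the convolution theorem, since
\begin{equation*}
T_\sigma f=\mathcal F^{-1}(\sigma\,\hat f)=\mathcal F^{-1}(\mathcal F\check{\sigma}\cdot\mathcal F f)=\check{\sigma}\ast f.
\end{equation*}
For $p<\infty$ I would extend by density: given $f\in L^p(\R)$, pick Schwartz approximants $f_n\to f$ in $L^p$; then $\check{\sigma}\ast f_n\to\check{\sigma}\ast f$ in $L^p$ by the easy form of Young's inequality applied to the differences, while sequential continuity of the Fourier transform, of multiplication by the Schwartz function $\sigma$, and of $\mathcal F^{-1}$ on $S'(\R)$ gives $T_\sigma f_n\to T_\sigma f$ in $S'(\R)$. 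The two limits must agree as tempered distributions, hence almost everywhere. For $p=\infty$ one instead tests $T_\sigma f$ against an arbitrary $\varphi\in S(\R)$: standard distributional duality moves the operator onto $\varphi$, yielding $\langle T_\sigma f,\varphi\rangle=\langle f,\check{\sigma}(-\cdot)\ast\varphi\rangle=\langle\check{\sigma}\ast f,\varphi\rangle$, and the identification again follows.

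Once $T_\sigma f=\check{\sigma}\ast f$ is in place, Young's inequality with exponents $(1,p,p)$ gives
\begin{equation*}
\|T_\sigma f\|_{L^p(\R)}=\|\check{\sigma}\ast f\|_{L^p(\R)}\leq\|\check{\sigma}\|_{L^1(\R)}\|f\|_{L^p(\R)},
\end{equation*}
which is exactly the claimed bound. There is no substantive obstacle: as the authors note, the content of the lemma is precisely Young's inequality, and the only routine technicality is the identification of the Fourier multiplier with the corresponding convolution operator at the level of tempered distributions.
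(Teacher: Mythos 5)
Your argument is correct and coincides with the paper's (which simply remarks that the lemma is a straightforward consequence of Young's inequality): you identify $T_\sigma$ with convolution by $\check{\sigma}\in L^1(\R)$ and apply Young's inequality, filling in the routine density/duality step to pass from Schwartz functions to all of $L^p$.
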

We also need for $S(t)=e^{it\Delta}$, the Schr\"odinger semigroup,  the `conservation of mass'
\begin{equation}
\label{Sch}
\|S(t)f\|_{L^{2}(\R)}=\|f\|_{L^{2}(\R)}.
\end{equation}
Lastly, let us recall the following number theoretic fact (see \cite{HW}, Theorem $315$) which is going to be used throughout the proof of Theorem \ref{th1}: Given an integer $m$, let $d(m)$ denote the number of divisors of $m$. Then 
\begin{equation}
\label{num}
d(m)\lesssim e^{c\frac{\log m}{\log\log m}}=o(m^{\epsilon})
\end{equation}
for all $\epsilon>0$. 

The paper is organised as follows: In Section \ref{smoothsection} we consider initial data $u_{0}=v_{0}+w_{0}$ with $v_{0}, w_{0}$ sufficiently smooth and we show that NLS \eqref{maineq} is locally wellposed. In Section \ref{firststeps} we describe the first steps of the differentiation by parts technique and in Section \ref{treenotation} we define the trees which allow us to continue with the infinite iteration procedure. Finally, in Section \ref{weakextended} we show that the solution $u$ described in Theorem \ref{th1} exists through a smooth approximation procedure and in Section \ref{uniquenessofsol} we prove Theorem \ref{finth2}. 

\end{section}

\begin{section}{smooth initial data}
\label{smoothsection}
Let us assume that the initial data is smooth, that is, $u_{0}=v_{0}+w_{0}$ where $v_{0}\in H^{s_{1}}(\R), w_{0}\in H^{s_{2}}(\T)$ for sufficiently large $s_{1},s_{2}\in\R$. We choose $s_{1}>1, s_{2}=s_{1}+1$. 
Then the spaces $H^{s_{1}}(\R)$ and $H^{s_{2}}(\T)$ are Banach algebras and an easy Banach contraction argument for the operator
\begin{equation}
\label{Banachcontr0}
Tw=e^{it\partial_{x}^{2}}w_{0}\pm\int_{0}^{t}e^{i(t-\tau)\partial_{x}^{2}}|w|^{2}w\ d\tau
\end{equation}
shows that the NLS \eqref{perpe} is locally wellposed in $X_{2}\coloneqq C([0,T],H^{s_{2}}(\T))$ for some $T=T(\|w_{0}\|_{H^{s_{2}}})$. Let $w$ be that solution of NLS \eqref{perpe} in the ball $\{w\in X_{2}:\|w\|_{X_{2}}\leq2\|w_{0}\|_{H^{s_{2}}}\}$ and consider the operator
\begin{equation}
\label{Banachcontr}
Tv=e^{it\partial_{x}^{2}}v_{0}\pm\int_{0}^{t}e^{i(t-\tau)\partial_{x}^{2}}G(w,v)\ d\tau.
\end{equation}
Our goal is to show that $T$ is a contraction in a suitable ball in $X_{1}\coloneqq C([0,T],H^{s_{1}}(\R))$. 

Before we prove this, let us estimate the norm of $\|wv\|_{H^{s_{1}}(\R)}$ for $w\in H^{s_{2}}(\T)$ and $v\in H^{s_{1}}(\R)$. First we need to calculate $\mathcal F(wv)(\xi)$ which equals 
$$\hat{w}\ast\hat{v}(\xi)=\Big(\sum_{n\in\Z}w_{n}\delta_{n}\Big)\ast\hat{v}(\xi)=\sum_{n\in\Z}w_{n}\hat{v}(\xi-n),$$
where we used that for a $1$--periodic function $w$ its Fourier transform is given by $\widehat{w}=\sum_{n\in\Z} w_n \delta_n$, where $\delta_n$ is Dirac delta centered at $n$. Thus,
$$|\mathcal F(wv)(\xi)|^{2}=\sum_{n,m\in\Z}w_{n}\bar{w}_{m}\hat{v}(\xi-n)\overline{\hat{v}(\xi-m)}$$
and, therefore,
\begin{equation*}
	\begin{split}
		\|wv\|_{H^{s_{1}}}^{2}
			&= \int_{\R}(1+|\xi|^{2})^{s_{1}}\sum_{n,m\in\Z}w_{n}\bar{w}_{m}\hat{v}(\xi-n)\overline{\hat{v}(\xi-m)}d\xi \\
			&=\Big|\int_{\R}(1+|\xi|^{2})^{s_{1}}\sum_{n,m\in\Z}w_{n}\bar{w}_{m}\hat{v}(\xi-n)\overline{\hat{v}(\xi-m)}d\xi\Big| \\
			&\leq \sum_{n,m\in\Z}|w_{n}||\bar{w}_{m}|\int_{\R}(1+|\xi|^{2})^{s_{1}}|\hat{v}(\xi-n)||\overline{\hat{v}(\xi-m)}|d\xi. 
	\end{split}
\end{equation*}

For the integral we apply H\"older's inequality
\begin{multline*}
\int_{\R}(1+|\xi|^{2})^{s_{1}}|  \hat{v}(\xi-n)||\overline{\hat{v}(\xi-m)}|d\xi \\
	 \leq\Big(\int_{\R}(1+|\xi|^{2})^{s_{1}}|\hat{v}(\xi-n)|^{2}d\xi\Big)^{\frac12}\Big(\int_{\R}(1+|\xi|^{2})^{s_{1}}|\hat{v}(\xi-m)|^{2}d\xi\Big)^{\frac12},
	\end{multline*}
and this can be estimated from above by the product
$$\lesssim_{s_{1}}(1+|n|^{s_{1}})(1+|m|^{s_{1}})\|v\|_{H^{s_{1}}}^{2},$$
which implies 
$$\|wv\|_{H^{s_{1}}}^{2}\lesssim_{s_{1}}\|v\|_{H^{s_{1}}}^{2}\Big(\sum_{n\in\Z}(1+|n|)^{s_{1}}|w_{n}|\Big)^{2}.$$
Since $s_1>1$, the last sum is again easily estimated using H\"older's inequality as follows
\begin{multline*}
	\Big(\sum_{n\in\Z} (1+|n|)^{s_{1}}  |w_{n}|\Big)^{2}
	 =\Big(\sum_{n\in\Z}\frac{(1+|n|)^{s_{1}+1}}{(1+|n|)}|w_{n}|\Big)^{2}\\
	\leq\Big(\sum_{n\in\Z}(1+|n|)^{2s_{1}+2}|w_{n}|^{2}\Big)\Big(\sum_{n\in\Z}\frac1{(1+|n|)^{2}}\Big)
		\lesssim\|w\|_{H^{s_{1}+1}}^{2}.
\end{multline*}
Thus
\begin{equation}
\label{obvv}
\|wv\|_{H^{s_{1}}(\R)}\lesssim_{s_{1}}\|w\|_{H^{s_{1}+1}(\T)}\|v\|_{H^{s_{1}}(\R)}.
\end{equation}
From \eqref{obvv} and \eqref{nonli} we also obtain  
\begin{equation}
\label{contraction1}
\|Tv\|_{X_{1}}\lesssim\|v_{0}\|_{H^{s_{1}}(\R)}+T\|G(w,v)\|_{X_{1}}\lesssim\|v_{0}\|_{X_{1}}+T(\|v\|_{X_{1}}+\|w\|_{X_{2}})^{3},
\end{equation}
which implies 
\begin{equation}
\label{contraction2}
\|Tv\|_{X_{1}}\lesssim\|v_{0}\|_{H^{s_{1}}(\R)}+T(\|v\|_{X_{1}}+2\|w_{0}\|_{H^{s_{2}}(\T)})^{3}.
\end{equation}
If we assume $v\in B\coloneqq\{v\in X_{1}:\|v\|_{X_{1}}\leq2\|v_{0}\|_{H^{s_{1}}}\coloneqq R\}$, then $T$ maps $B$ into itself for sufficiently small $T=T(\|v_{0}\|_{H^{s_{1}}(\R)}, \|w_{0}\|_{H^{s_{2}}(\T)})$. Indeed, for $T>0$ such that $16T(\|v_{0}\|_{H^{s_{1}}}+\|w_{0}\|_{H^{s_{2}}})^{3}\leq R$, 
we see from \eqref{contraction2} that $Tv\in B$. 

Also, for $v_{1},v_{2}\in B$, it is easy to see 
\begin{equation}
\label{contraction3}
Tv_{1}-Tv_{2}=\pm\int_{0}^{t}\left(G(w,v_{1})-G(w,v_{2})\right)\, d\tau
\end{equation}
where the difference inside the integral equals 
$$|v_{1}|^{2}v_{1}-|v_{2}|^{2}v_{2}+v_{1}^{2}w-v_{2}^{2}w+w^{2}\bar{v}_{1}-w^{2}\bar{v}_{2}+2w|v_{1}|^{2}-2w|v_{2}|^{2}+2v_{1}|w|^{2}-2v_{2}|w|^{2}=$$
$$(v_{1}-v_{2})(|v_{1}|^{2}+\bar{v}_{1}v_{2})+(\bar{v}_{1}-\bar{v}_{2})v_{2}^{2}+w(v_{1}+v_{2})(v_{1}-v_{2})+w^{2}(\bar{v}_{1}-\bar{v}_{2})+$$
$$2w(v_{1}(\bar{v}_{1}-\bar{v}_{2})+\bar{v}_{2}(v_{1}-v_{2}))+2|w|^{2}(v_{1}-v_{2}).$$
Thus,
\begin{equation}
\label{contraction4}
\|Tv_{1}-Tv_{2}\|_{X_{1}}\leq T(\|v_{1}\|_{X_{1}}+\|v_{2}\|_{X_{1}}+\|w\|_{X_{2}})^{2}\|v_{1}-v_{2}\|_{X_{1}},
\end{equation}
which implies, for sufficiently small $T=T(\|v_{0}\|_{H^{s_{1}}}, \|w_{0}\|_{H^{s_{2}}})>0$, that the operator $T:B\to B$ is a contraction. Therefore, we have proved
\begin{lemma}
\label{smoothsol}
Let $s>1$ and $u_{0}=v_{0}+w_{0}\in H^{s}(\R)+H^{s+1}(\T)$. Then NLS \eqref{maineq} is locally wellposed with a solution $u=v+w\in C([0,T],H^{s}(\R))+C([0,T],H^{s+1}(\T))$ where $w$ solves \eqref{perpe} in the sense that satisfies \eqref{Banachcontr0} and $v$ solves \eqref{nonperpe} in the sense that it satisfies \eqref{Banachcontr} for a sufficiently small $T=T(\|v_{0}\|_{H^{s}}, \|w_{0}\|_{H^{s+1}})>0$.
\end{lemma}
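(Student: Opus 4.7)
The plan is to construct $w$ and $v$ successively via two Banach fixed-point arguments, using the smoothness of the initial data to trivialise most nonlinear estimates. First I would solve the purely periodic problem \eqref{perpe} in $X_2 = C([0,T], H^{s+1}(\T))$: since $s+1>1/2$ the space $H^{s+1}(\T)$ is a Banach algebra, so the cubic map $w\mapsto |w|^2 w$ is bounded, and a routine contraction on the ball $\{\|w\|_{X_2} \le 2\|w_0\|_{H^{s+1}}\}$ produces a solution of \eqref{Banachcontr0} for a time depending only on $\|w_0\|_{H^{s+1}(\T)}$.

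With $w$ frozen, the key analytic input for the $v$-equation is the mixed product estimate
\begin{equation*}
\|wv\|_{H^s(\R)} \lesssim_s \|w\|_{H^{s+1}(\T)}\,\|v\|_{H^s(\R)},
\end{equation*}
which I would prove by writing the Fourier transform of the $1$-periodic function $w$ as $\widehat w = \sum_{n\in\Z} w_n\,\delta_n$, so that $\widehat{wv}(\xi) = \sum_n w_n\,\widehat v(\xi-n)$. Weighting by $\langle\xi\rangle^{2s}$, expanding the square, and applying Cauchy--Schwarz in $\xi$ (after the translation $\int\langle\xi\rangle^{2s}|\widehat v(\xi-n)|^2\,d\xi \lesssim (1+|n|)^{2s}\|v\|_{H^s}^2$) reduce matters to controlling $\bigl(\sum_n(1+|n|)^s|w_n|\bigr)^2$; a second Cauchy--Schwarz against the summable weight $(1+|n|)^{-2}$ then converts this into $\|w\|_{H^{s+1}(\T)}^2$. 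This asymmetric loss of a full derivative on the torus piece is what forces the choice $s_2 = s_1 + 1$ in the statement.

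Combining this bilinear bound with the Banach algebra property of $H^s(\R)$ for the pure term $|v|^2 v$, each of the five summands of $G(w,v)$ in \eqref{nonli} is bounded in $H^s(\R)$ by $(\|v\|_{X_1} + \|w\|_{X_2})^3$. Since the Schr\"odinger semigroup is an $H^s(\R)$-isometry, the Duhamel operator \eqref{Banachcontr} gains a factor $T$ from time integration. Setting $R = 2\|v_0\|_{H^s}$ and shrinking $T$ so that $T(R + 2\|w_0\|_{H^{s+1}})^3 \le R$ gives the self-map property on $B = \{v\in X_1:\|v\|_{X_1} \le R\}$. Expanding $G(w,v_1) - G(w,v_2)$ into pieces each containing a factor $v_1 - v_2$ or $\bar v_1 - \bar v_2$ and applying the same product estimate term-by-term then yields $\|Tv_1 - Tv_2\|_{X_1} \lesssim T(R + \|w\|_{X_2})^2 \|v_1 - v_2\|_{X_1}$, and a further shrinking of $T$ completes the contraction.

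The hard part is the mixed product estimate itself: since $w$ does not decay, none of the usual $L^p$-based Sobolev product rules on $\R$ are available. What saves us is that $\widehat w$ is a pure point measure supported on the integer lattice, turning the frequency convolution into a discrete sum whose convergence can be purchased by paying one extra derivative on the periodic factor. Once that estimate is in hand everything reduces to a textbook Picard iteration, and the local Lipschitz dependence of the solution on $(v_0,w_0)$ follows from essentially the same difference calculation.
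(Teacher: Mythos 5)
Your proposal is correct and follows essentially the same route as the paper: solve the periodic NLS first by contraction in $C([0,T],H^{s+1}(\T))$ using the Banach algebra property, then establish the mixed product bound $\|wv\|_{H^s(\R)}\lesssim\|w\|_{H^{s+1}(\T)}\|v\|_{H^s(\R)}$ by writing $\widehat w=\sum_n w_n\delta_n$, applying Cauchy--Schwarz twice, and paying an extra derivative on the periodic factor, and finally run a second contraction for $v$ via \eqref{Banachcontr}. The paper's argument matches yours step for step, including the identification of the discrete-measure structure of $\widehat w$ as the key analytic observation.
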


\end{section}

\begin{section}{first steps of the iteration process}
\label{firststeps}
From here on, we consider only the case $s_{1}=s_{2}=0$ in Theorem \ref{th1} since for the other cases similar considerations apply. See Remark \ref{reme} at the end of the Section \ref{treenotation} for a more detailed argument. We also assume in the following calculations that the functions $v$ and $w$ are sufficiently smooth. 

Let us define the function $\Phi:\R^{4}\to\R, \Phi(\xi,\xi_{1},\xi_{2},\xi_{3})\coloneqq\xi^{2}-\xi_{1}^{2}+\xi_{2}^{2}-\xi_{3}^{2}$ 
and observe that, 
under the hypothesis $\xi=\xi_{1}-\xi_{2}+\xi_{3}$, it factorizes 
into $\Phi(\xi,\xi_{1},\xi_{2},\xi_{3})=2(\xi-\xi_{1})(\xi-\xi_{3})$. 

By making the change of variables $w\mapsto e^{-it\partial_{x}^{2}}w$, 
we can rewrite the periodic NLS \eqref{perpe} in terms of its Fourier coefficients as 
\begin{align}
\partial_{t}w_{n}
	& =\sum_{n=n_{1}-n_{2}+n_{3}}e^{-2i(n-n_{1})(n-n_{3})t}w_{n_{1}}\bar{w}_{n_{2}}w_{n_{3}}-|w_{n}|^{2}w_{n}+2\Big(\int_{\T}|w|^{2}dx\Big)w_{n} \label{nperpe} \\
	& =\mathscr N_{1}^{t}(w)(n)-\mathscr R_{1}^{t}(w)(n)+\mathscr R_{2}^{t}(w)(n) \nonumber. 
\end{align}

In a similar fashion, we would like to  rewrite the modified NLS 
\eqref{nonperpe}, which contains both periodic and non--periodic 
functions. For this we again make the change of 
variables   $v\mapsto e^{-it\partial_{x}^{2}}v$ and introduce, with the help of the isometric decomposition operators,
$v_{n}\coloneqq\Box_{n}v$ for $n\in\Z$. Note that its Fourier transform, 
$\hat{v}_n$, is a function supported within the interval $(n-1,n+1)$, so, in general, products of the form $\hat{v}_n\hat{v}_m$ can be non-zero only if $|n-m|\le 1$, that is, only neighbouring 
$\hat{v}_n$ can overlap.  
Thus it is convenient to define  
\begin{equation}
\label{approxim}
n\approx m\  \mbox{iff}\ n=m\ \mbox{or}\ n=m+1\ \mbox{or}\ n=m-1 
\end{equation}
 for $n,m\in\Z$. 
Recall that for a $1$--periodic function $w$ its Fourier transform is given by $\widehat{w}=\sum_{n\in\Z} w_n \delta_n$, where $\delta_n$ is Dirac delta centered at $n$. Thus $\Box_n w(x)= w_n e^{inx}$, since the partition of unity we use in the definition of $\Box_n$ 
obeys $1=\sigma_n(n)$. With this we may rewrite the modified NLS 
 \eqref{nonperpe} on the Fourier side, up to constants, as 
\begin{equation}
  \begin{split}\label{nnonperpe}
	\partial_{t}\hat{v}_{n}
	& =E^{1,t}_{I,n}(v_{n_{1}},\bar{v}_{n_{2}},v_{n_{3}})+E^{1,t}_{II,n}(w_{n_{1}},\bar{w}_{n_{2}},v_{n_{3}})+E^{1,t}_{III,n}(w_{n_{1}},\bar{v}_{n_{2}},w_{n_{3}}) \\
	&\phantom{=}\,  +E^{1,t}_{IV,n}(v_{n_{1}},\bar{v}_{n_{2}},w_{n_{3}})+E^{1,t}_{V.n}(v_{n_{1}},\bar{w}_{n_{2}},v_{n_{3}})\, .
  \end{split}
\end{equation} 
where we also introduced  
\begin{equation}
\label{q1}
E^{1,t}_{I,n}(v_{n_{1}},\bar{v}_{n_{2}},v_{n_{3}})(\xi)\coloneqq\!\!\sum_{n\approx n_{1}-n_{2}+n_{3}}\!\!\overbrace{\sigma_{n}(\xi)\iint_{\R^2}e^{-2i(\xi-\xi_{1})(\xi-\xi_{3})t}\hat{v}_{n_{1}}(\xi_{1})\hat{\bar{v}}_{n_{2}}(\xi-\xi_{1}-\xi_{3})\hat{v}_{n_{3}}(\xi_{3})d\xi_{1} d\xi_{3}}^{\eqqcolon\mathcal F(Q^{1,t}_{I,n}(v_{n_{1}},\bar{v}_{n_{2}},v_{n_{3}}))}
\end{equation}
\begin{equation}
\label{q2}
E^{1,t}_{II,n}(w_{n_{1}},\bar{w}_{n_{2}},v_{n_{3}})(\xi)\coloneqq\!\!\sum_{n\approx n_{1}-n_{2}+n_{3}}\!\!\overbrace{\sigma_{n}(\xi)e^{-2i(\xi-n_{1})(n_{1}-n_{2})t}w_{n_{1}}\bar{w}_{n_{2}}\hat{v}_{n_{3}}(\xi-n_{1}+n_{2})}^{\eqqcolon \mathcal F(Q^{1,t}_{II,n}(w_{n_{1}},\bar{w}_{n_{2}},v_{n_{3}}))}
\end{equation}
\begin{equation}
\label{q3}
E^{1,t}_{III,n}(w_{n_{1}},\bar{v}_{n_{2}},w_{n_{3}})(\xi)\coloneqq\!\!\!\sum_{n\approx n_{1}-n_{2}+n_{3}}\!\!\overbrace{\sigma_{n}(\xi)e^{-2i(\xi-n_{1})(\xi-n_{3})t}w_{n_{1}}\hat{\bar{v}}_{n_{2}}(\xi-n_{1}-n_{3})w_{n_{3}}}^{\eqqcolon \mathcal F(Q^{1,t}_{III,n}(w_{n_{1}},\bar{v}_{n_{2}},w_{n_{3}}))}
\end{equation}
\begin{equation}
\label{q4}
E^{1,t}_{IV,n}(v_{n_{1}},\bar{v}_{n_{2}},w_{n_{3}})(\xi)\coloneqq\!\!\sum_{n\approx n_{1}-n_{2}+n_{3}}\!\!\overbrace{\sigma_{n}(\xi)w_{n_{3}}\int_{\R}e^{-2i(\xi-n_{3})(\xi-\xi_{1})t}\hat{v}_{n_{1}}(\xi_{1})\hat{\bar{v}}_{n_{2}}(\xi-n_{3}-\xi_{1})d\xi_{1}}^{\eqqcolon \mathcal F(Q^{1,t}_{IV,n}(v_{n_{1}},\bar{v}_{n_{2}},w_{n_{3}}))}
\end{equation}
\begin{equation}
\label{q5}
E^{1,t}_{V.n}(v_{n_{1}},\bar{w}_{n_{2}},v_{n_{3}})(\xi)\coloneqq\!\!\sum_{n\approx n_{1}-n_{2}+n_{3}}\!\!\overbrace{\sigma_{n}(\xi)\bar{w}_{n_{2}}\int_{\R}e^{-2i(\xi-\xi_{1})(\xi_{1}-n_{2})t}\hat{v}_{n_{1}}(\xi_{1})\hat{v}_{n_{3}}(\xi+n_{2}-\xi_{1})d\xi_{1}}^{\eqqcolon \mathcal F(Q^{1,t}_{V,n}(v_{n_{1}},\bar{w}_{n_{2}},v_{n_{3}}))}.
\end{equation}
\begin{remark}
A short note on our notation is necessary here: The expression  $E^{1,t}_{I,n}(v_{n_{1}},\bar{v}_{n_{2}},v_{n_{3}})$ above depends not only on the single $v_{n_{1}}$, $\bar{v}_{n_{2}}$, or $v_{n_{3}}$, but on the sequences $(v_{n_{1}})_{n_1\in\Z}$, $(\bar{v}_{n_{2}})_{n_2\in\Z}$, and  $(v_{n_{3}})_{n_3\in\Z}$. So one should instead write  $E^{1,t}_{I,n}((v_{n_{1}})_{n_1\in\Z},(\bar{v}_{n_{2}})_{n_2\in\Z},(v_{n_{3}})_{n_3\in\Z})$, or simply, 
 $E^{1,t}_{I,n}(v,\bar{v},v)$. 
 However, when we construct a tree--type expansion later, it will be very important to know in which order the $v_n$ and $w_m$ appear in 
 considerably more involved expressions. Thus it will be convenient to  write $E^{1,t}_{I,n}(v,\bar{v},v)$ as $E^{1,t}_{I,n}(v_{n_{1}},\bar{v}_{n_{2}},v_{n_{3}})$, keeping in mind, that one sums over $n_1$, $n_2$, and $n_3$.  The same applies to the other terms on the right--hand side of  equation \eqref{nnonperpe}.	
\end{remark}

\begin{remark}
\label{oll}
The operator $Q^{1,t}_{I,n}$ in the definition of $E^{1,t}_{I,n}$ in equation \eqref{q1} is the same as the operator $Q^{1,t}_{n}$ 
studied in \cite{CHKP} and \cite{NP}. Here let us notice that if 
we choose functions such that 
$\hat{v}_{n_{1}}=w_{n_{1}}\delta_{n_{1}}$ and $\hat{v}_{n_{2}}=w_{n_{2}}\delta_{n_{2}}$ then we obtain the relation 
$Q^{1,t}_{I,n}(v_{n_{1}},\bar{v}_{n_{2}},v_{n_{3}})=Q^{1,t}_{II,n}(w_{n_{1}},\bar{w}_{n_{2}},v_{n_{3}})$. 
Similar relations hold between $Q^{1,t}_{I,n}$ and the remaining operators $Q^{1,t}_{III,n},Q^{1,t}_{IV,n}$ and $Q^{1,t}_{V,n}$.
\end{remark}
We split the sums in \eqref{q1}, \eqref{q2}, \eqref{q3}, \eqref{q4} and \eqref{q5} into 
$$\sum_{\substack{n_{1}\approx n\\ or\\ n_{3}\approx n}}\ldots+\sum_{n_{1}, n_{3}\not\approx n}\ldots $$
and define the resonant operators
\begin{equation}
\label{main9}
R^{t}_{2}(v)(n)\coloneqq\Big(\sum_{n_{1}\approx n}+\sum_{n_{3}\approx n}\Big)\Big(Q^{1,t}_{I,n}+Q^{1,t}_{II,n}+Q^{1,t}_{III,n}+Q^{1,t}_{IV,n}+Q^{1,t}_{V,n}\Big)
\end{equation}
$$R^{t}_{1}(v)(n)\coloneqq\sum_{\substack{n_{1}\approx n\\ and\\ n_{3}\approx n}}\Big(Q^{1,t}_{I,n}+Q^{1,t}_{II,n}+Q^{1,t}_{III,n}+Q^{1,t}_{IV,n}+Q^{1,t}_{V,n}\Big) $$
and the non-resonant operator 
\begin{equation}
\label{main10}
N_{1}^{t}(v)(n)\coloneqq\sum_{n_{1}, n_{3}\not\approx n}\Big(Q^{1,t}_{I,n}+Q^{1,t}_{II,n}+Q^{1,t}_{III,n}+Q^{1,t}_{IV,n}+Q^{1,t}_{V,n}\Big).
\end{equation}
With this notation, equation \eqref{nnonperpe} can be written in the form
\begin{equation}
\label{mainmain}
\partial_{t}v_{n}=R^{t}_{2}(v)(n)-R^{t}_{1}(v)(n)+N_{1}^{t}(v)(n),
\end{equation}
keeping in mind that the operators appearing in the RHS above depend also on the periodic function $w$, which we suppress in our notation, for simplicity. For the resonant part we have the estimate

\begin{lemma}
\label{lem}
For $j=1,2$ 
$$\|R^{t}_{j}(v)\|_{l^{2}(\Z)L^{2}(\R)}\lesssim\|v\|^{3}_{L^{2}(\R)}+\|w\|_{L^{2}(\T)}\|v\|^{2}_{L^{2}(\R)}+\|w\|^{2}_{L^{2}(\T)}\|v\|_{L^{2}(\R)}$$
and
$$\|R^{t}_{j}(v)-R^{t}_{j}(u)\|_{l^{2}(\Z)L^{2}(\R)}\lesssim$$
$$(\|v\|^{2}_{L^{2}(\R)}+\|u\|^{2}_{L^{2}(\R)}+\|w\|_{L^{2}(\T)}(\|v\|_{L^{2}(\R)}+\|u\|_{L^{2}(\R)})+\|w\|^{2}_{L^{2}(\T)})\|v-u\|_{L^{2}(\R)}.$$
\end{lemma}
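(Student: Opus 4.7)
My plan is to treat the five operators $Q^{1,t}_{I,n},\dots,Q^{1,t}_{V,n}$ separately, as they all have the same structure up to the book-keeping of which factors are periodic and which are non-periodic. The starting point is the observation that, since $e^{it\partial_x^2}$ commutes with $\Box_n$ and preserves $L^2$, each $Q^{1,t}_{X,n}$ reduces to $\Box_n$ applied to a pointwise product of the three input factors. For the $v$ inputs, the relevant factor is of the form $e^{-it\partial_x^2}v_{n_i}$, whose Fourier support is the same as that of $v_{n_i}$, so Bernstein's inequality \eqref{Bern1} gives $\|e^{-it\partial_x^2}v_{n_i}\|_{L^\infty}\lesssim \|v_{n_i}\|_{L^2}$. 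For the periodic inputs, $\Box_{n_i}w=w_{n_i}e^{2\pi i n_i x}$ has $L^\infty$ norm $|w_{n_i}|$. Putting the single $L^2$-factor (always present, since none of $Q_I,\dots,Q_V$ is purely periodic) in $L^2$ and the other two in $L^\infty$ via Hölder, I get the pointwise-in-$n$ estimates
\begin{equation*}
\|Q^{1,t}_{X,n}(\cdot)\|_{L^2(\R)}\lesssim \prod_{i=1}^{3}\|v_{n_i}\|_{L^2(\R)}\ \text{ or }\ |w_{n_i}|,
\end{equation*}
depending on which factors are periodic.

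With these basic estimates in hand, the remaining work is purely combinatorial. The resonance condition $n\approx n_1-n_2+n_3$ together with the localization in the definitions of $R^t_1$ and $R^t_2$ severely restricts the sum. For $R^t_1$ one forces $n_1\approx n$ and $n_3\approx n$, which in turn forces $n_2\approx n$, so for every fixed $n$ only $O(1)$ triples $(n_1,n_2,n_3)$ contribute; this gives a pointwise trilinear bound in terms of $v_{n\pm 1},v_n$ (and $w_{n\pm 1},w_n$), and summing over $n$ in $\ell^2$ yields the three types of products $\|v\|_{L^2}^3,\|w\|_{L^2(\T)}\|v\|_{L^2}^2$ and $\|w\|_{L^2(\T)}^2\|v\|_{L^2}$ claimed in the lemma, according to how many $v$'s and $w$'s appear in $Q_I,\dots,Q_V$. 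For $R^t_2$ only one of $n_1,n_3$ is pinned near $n$ (say $n_1\approx n$), forcing $n_2\approx n_3$ by $n\approx n_1-n_2+n_3$, and the remaining index $n_3$ runs freely. The sum over the free $n_3$ with $n_2\approx n_3$ has the structure of a convolution by a finitely supported sequence, which is controlled by Cauchy--Schwarz, giving $\|v\|_{L^2}^2$ or $\|w\|_{L^2(\T)}\|v\|_{L^2}$ or $\|w\|_{L^2(\T)}^2$ times $(\|v_{n\pm 1}\|_{L^2}+\|v_n\|_{L^2})$ or $(|w_{n\pm 1}|+|w_n|)$; an $\ell^2(n)$ summation finishes the proof.

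The Lipschitz estimate follows from the same pointwise bound together with the multilinear structure of each $Q^{1,t}_{X,n}$. Writing differences as telescoping sums, e.g.
\begin{equation*}
Q^{1,t}_{I,n}(v,\bar v,v)-Q^{1,t}_{I,n}(u,\bar u,u)=Q^{1,t}_{I,n}(v-u,\bar v,v)+Q^{1,t}_{I,n}(u,\overline{v-u},v)+Q^{1,t}_{I,n}(u,\bar u,v-u),
\end{equation*}
and analogously for $Q_{IV},Q_V$ (bilinear in $v$), while $Q_{II},Q_{III}$ are linear in $v$ so the telescoping is trivial, exactly the same trilinear estimate is then applied to each piece; collecting terms produces the right-hand side stated in the lemma. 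I do not expect a serious obstacle here: the only slightly delicate point is making sure that the Cauchy--Schwarz step for $R^t_2$ is carried out in the free summation variable and not in the pinned one, so that one indeed gets $\|v\|_{L^2(\R)}$ and $\|w\|_{L^2(\T)}$ rather than a norm of $w$ restricted to neighborhoods of $n$; once this is done, everything else is routine.
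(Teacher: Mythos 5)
Your proposal is correct and takes essentially the same approach as the paper: the pointwise-in-$n$ bounds you derive are exactly those of Remark~\ref{expl}, and the sum over the pinned versus free indices is handled in the same way (finitely many terms for $R^t_1$, Cauchy--Schwarz in the free index for $R^t_2$). The only stylistic difference is that the paper treats the five operators case by case (citing \cite{NP} for $Q_I$, arguing directly for $Q_{II}$, and using the $V_n$-substitution only for $Q_V$), whereas you observe up front that all of $Q_I,\dots,Q_V$ become $\Box_n$ of a pointwise product after conjugation by the Schr\"odinger flow and derive all five bounds from one $L^\infty\times L^\infty\times L^2$ H\"older plus Bernstein.
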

\begin{proof}
Both resonant operators contain a sum that only involves the $v$ function, that is 
$$\Big(\sum_{n_{1}\approx n}+\sum_{n_{3}\approx n}-\sum_{\substack{n_{1}\approx n\\ and\\ n_{3}\approx n}}\Big)Q^{1,t}_{I,n}(v_{n_{1}},\bar{v}_{n_{2}},v_{n_{3}}).$$
As mentioned in Remark \ref{oll} this operator was estimated in \cite{NP}, it gives the upper bound of $\|v\|^{3}_{2}$ and we refer the interested reader to Lemma $10$ of that paper. 

For the sum that contains $Q^{1,t}_{II,n}$ and $Q^{1,t}_{III,n}$ it suffices to estimate only $Q^{1,t}_{II,n}$., the bound for the sum involving $Q^{1,t}_{III,n}$ is very similar to one for $Q^{1,t}_{II,n}$  Moreover, since, for fixed $n\in\Z$, the sum 
$$\sum_{\substack{n_{1}\approx n\\ and\\ n_{3}\approx n}}Q^{1,t}_{II,n}(w_{n_{1}},\bar{w}_{n_{2}},v_{n_{3}})$$
is only over the neighbours of $n$, we only look at the part where $n_{1}=n$ and $n_{3}=n$, the other summands are bounded in the same way. Then we have the estimate
$$\Big\|\sigma_{n}(\xi)w_{n}\bar{w}_{n}\hat{v}_{n}(\xi)\Big\|_{l^{2}(\Z)L_\xi^{2}(\R)}\lesssim\Big(\sum_{n\in\Z}|w_{n}|^{4}\|v_{n}\|_{2}^{2}\Big)^{\frac12}\lesssim \|w_{n}\|^{2}_{l^{\infty}(\Z)}\|v\|_{2}\leq\|w\|_{L^{2}(\T)}^{2}\|v\|_{L^{2}(\R)}, $$
by the embedding $l^{2}(\Z)\hookrightarrow l^{\infty}(\Z)$. To continue it suffices to look at the sum 
$$\sum_{n_{1}\approx n}Q^{1,t}_{II,n}(w_{n_{1}},\bar{w}_{n_{2}},v_{n_{3}}).$$
Again, since it consists of finitely many summands, depending on whether $n_{1}=n-1$ or $n_{1}=n$ or $n_{1}=n+1,$ it is enough to estimate the part where $n_{1}=n$. In this case, we have
$$\Big\|\sigma_{n}(\xi)w_{n}\sum_{n_{2}\in\Z}e^{-2it(\xi-n)(n-n_{2})}\bar{w}_{n_{2}}\hat{v}_{n_{2}}(\xi-n+n_{2})\Big\|_{L^{2}(\R)}\lesssim|w_{n}|\sum_{n_{2}\in\Z}|w_{n_{2}}|\|v_{n_{2}}\|_{2},$$
so with H\"older's inequality we get the upper bound
$$|w_{n}|\Big(\sum_{n_{2}\in\Z}|w_{n_{2}}|^{2}\Big)^{\frac12}\Big(\sum_{n_{2}\in\Z}\|v_{n_{2}}\|_{2}^{2}\Big)^{\frac12}=|w_{n}|\|w\|_{L^{2}(\T)}\|v\|_{L^{2}(\R)}.$$
Taking the $l^{2}(\Z)$ norm we obtain 
$$\|w\|_{L^{2}(\T)}^{2}\|v\|_{L^{2}(\R)}.$$

For the sum that contains $Q^{1,t}_{IV,n}$ and $Q^{1,t}_{V.n}$ it suffices to estimate only $Q^{1,t}_{V,n}$. As before, from the sum 
$$\sum_{\substack{n_{1}\approx n\\ and\\ n_{3}\approx n}}Q^{1,t}_{V,n}(v_{n_{1}},\bar{w}_{n_{2}},v_{n_{3}})$$
we may look only at the part where $n_{1}=n$ and $n_{3}=n$. Thus, we have
$$\Big\|\sigma_{n}(\xi)\bar{w}_{n}\int_{\R}e^{-2it(\xi-\xi_{1})(\xi_{1}-n)}\hat{v}_{n}(\xi_{1})\hat{v}_{n}(\xi-\xi_{1}+n)d\xi_{1}\Big\|_{L^{2}(\R)}$$
which, by setting $\hat{V}_{n}=e^{it\xi^{2}}\hat{v}_{n}$ and using \eqref{Sch}, we may rewrite as 
$$\Big\|\sigma_{n}(\xi)\bar{w}_{n}e^{-itn^{2}}\int_{\R}\hat{V}_{n}(\xi_{1})\hat{V}_{n}(\xi-\xi_{1}+n)d\xi_{1}\Big\|_{L^{2}(\R)}\lesssim|w_{n}|\Big\|\hat{V}_{n}\ast\hat{V}_{n}(\cdot+n)\Big\|_{L^{2}(\R)}.$$
The last expression equals
$$|w_{n}|\Big\|V_{n}e^{in(\cdot)}V_{n}\Big\|_{L^{2}(\R)}=|w_{n}|\|V_{n}\|^{2}_{L^{4}(\R)}\lesssim|w_{n}|\|V_{n}\|_{L^{2}(\R)}^{2},$$
where we used \eqref{Bern1} and, since $\|V_{n}\|_{2}=\|v_{n}\|_{2}$ (by \eqref{Sch}), we can take the $l^{2}(\Z)$ norm in $n$ and obtain the upper bound
$$\Big(\sum_{n}|w_{n}|^{2}\|v_{n}\|_{2}^{4}\Big)^{\frac12}\leq\|w\|_{L^{2}(\T)}\|\{\|v_{n}\|_{2}\}_{n\in\Z}\|_{l^{\infty}(\Z)}^{2}\leq\|w\|_{L^{2}(\T)}\|v\|_{L^{2}(\R)}^{2},$$
by the embedding $l^{2}(\Z)\hookrightarrow l^{\infty}(\Z)$. Finally, we look at the sum 
$$\sum_{n_{1}\approx n}Q^{1,t}_{V,n}(v_{n_{1}},\bar{w}_{n_{2}},v_{n_{3}}).$$
As before, it suffices to look at the term where $n_{1}=n$. In this case we have
$$\Big\|\sigma_{n}(\xi)\sum_{n_{2}\in\Z}\bar{w}_{n_{2}}\int_{\R}e^{-2it(\xi-\xi_{1})(\xi_{1}-n_{2})}\hat{v}_{n_{1}}(\xi_{1})\hat{v}_{n_{2}}(\xi-\xi_{1}+n_{2})d\xi_{1}\Big\|_{L^{2}(\R)},
$$
and setting again $\hat{V}_{n}=e^{it\xi^{2}}\hat{v}_{n}$, we arrive at the upper bound
$$\sum_{n_{2}\in\Z}|w_{n_{2}}|\|\hat{V}_{n}\ast\hat{V}_{n_{2}}(\cdot+n_{2})\|_{2}=\sum_{n_{2}\in\Z}|w_{n_{2}}|\|V_{n}e^{in_{2}(\cdot)}V_{n_{2}}\|_{2}=\sum_{n_{2}\in\Z}|w_{n_{2}}|\|V_{n}V_{n_{2}}\|_{2}.$$
Applying H\"older's inequality, \eqref{Bern1} and \eqref{Sch} we continue the estimate as follows
$$\sum_{n_{2}\in\Z}|w_{n_{2}}|\|V_{n}\|_{4}\|V_{n_{2}}\|_{4}\lesssim\|V_{n}\|_{2}\sum_{n_{2}\in\Z}|w_{n_{2}}|\|V_{n_{2}}\|_{2}=\|v_{n}\|_{2}\sum_{n_{2}\in\Z}|w_{n_{2}}|\|v_{n_{2}}\|_{2}$$
$$\leq\|v_{n}\|_{2}\Big(\sum_{n_{2}\in\Z}|w_{n_{2}}|^{2}\Big)^{\frac12}\Big(\sum_{n_{2}\in\Z}\|v_{n_{2}}|^{2}\Big)^{\frac12}=\|v_{n}\|_{2}\|w\|_{L^{2}(\T)}\|v\|_{L^{2}(\R)}.$$
Taking the $l^{2}(\Z)$ norm in $n$ finishes the proof.
\end{proof}
\begin{remark}
\label{outoftheblue}
In \cite{GKO} it was proved that the resonant part of the periodic solution $w$ satisfies 
$$\|\mathscr R_{j}^{t}(w)\|_{L^{2}(\T)}\lesssim\|w\|^{3}_{L^{2}(\T)}$$
for $j=1,2$. This will be used later in Lemma \ref{finaal} for the estimate of the $N_{r}^{(J)}$ operator. 
\end{remark}
In order to continue the iteration process we define the sets
\begin{equation}
\label{set1}
A_{N}(n)=\{(n_{1},n_{2},n_{3})\in\mathbb Z^3:n_{1}-n_{2}+n_{3}\approx n, n_{1}\not\approx n\not\approx n_{3}, |\Phi(n,n_{1},n_{2},n_{3})|\leq N\}
\end{equation}
and
\begin{equation}
\label{idid}
A_{N}(n)^{c}=\{(n_{1},n_{2},n_{3})\in\mathbb Z^3:n_{1}-n_{2}+n_{3}\approx n, n_{1}\not\approx n\not\approx n_{3}, |\Phi(n,n_{1},n_{2},n_{3})|> N\}.
\end{equation}
The number $N>0$ is considered to be large and will be fixed later 
in the proof. The non-resonant operator $N_{1}^{t}$ we 
split as
\begin{equation}
\label{main13}
N_{1}^{t}(v)(n)=N_{11}^{t}(v)(n)+N_{12}^{t}(v)(n),
\end{equation}
where 
\begin{align*}
N_{11}^{t}(v)(n)
	&=\sum_{A_{N}(n)}\Big(Q^{1,t}_{I,n}(v_{n_{1}},\bar{v}_{n_{2}},v_{n_{3}})+Q^{1,t}_{II,n}(w_{n_{1}},\bar{w}_{n_{2}},v_{n_{3}})+Q^{1,t}_{III,n}(w_{n_{1}},\bar{v}_{n_{2}},w_{n_{3}})\\ 
	&\phantom{=\sum_{A_{N}(n)}\Big( ~}+Q^{1,t}_{IV,n}(v_{n_{1}},\bar{v}_{n_{2}},w_{n_{3}})+Q^{1,t}_{V,n}(v_{n_{1}},\bar{w}_{n_{2}},v_{n_{3}})\Big),
\end{align*}
and the following yields a convenient bound on $N^t_{11}$.

\begin{lemma}
\label{lemle}
$$\|N_{11}^{t}(v)\|_{l^{2}(\Z)L^{2}(\R)}\lesssim N^{\frac12+}(\|v\|^{3}_{L^{2}(\R)}+\|w\|_{L^{2}(\T)}\|v\|^{2}_{L^{2}(\R)}+\|w\|^{2}_{L^{2}(\T)}\|v\|_{L^{2}(\R)})$$
and
$$\|N_{11}^{t}(v)-N_{11}^{t}(u)\|_{l^{2}(\Z)L^{2}(\R)}\lesssim$$ 
$$N^{\frac12+}(\|v\|^{2}_{L^{2}(\R)}+\|u\|^{2}_{L^{2}(\R)}+\|w\|_{L^{2}(\T)}(\|v\|_{L^{2}(\R)}+\|u\|_{L^{2}(\R)})+\|w\|^{2}_{L^{2}(\T)})\|v-u\|_{L^{2}(\R)}.$$
\end{lemma}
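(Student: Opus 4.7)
My plan is to decompose $N_{11}^t = N_{11,I}^t + N_{11,II}^t + N_{11,III}^t + N_{11,IV}^t + N_{11,V}^t$ according to the five $Q^{1,t}_X$ operators appearing in its definition, and prove the desired $N^{\frac12+}$ bound for each piece separately; the triangle inequality then yields the claim. The $v$-only piece $N_{11,I}^t$ is exactly the non-resonant operator bounded in Lemma $10$ of \cite{NP} (recall Remark \ref{oll}), contributing $N^{\frac12+}\|v\|_{L^2(\R)}^3$ to the stated bound.

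For each of the four remaining pieces the approach is uniform. I first apply Cauchy--Schwarz on the finite index set $A_N(n)$: for any $L^2_\xi$-valued family $(g_\alpha)_{\alpha\in A_N(n)}$,
\begin{equation*}
\Bigl\|\sum_{\alpha\in A_N(n)} g_\alpha\Bigr\|_{L^2_\xi}^2 \leq |A_N(n)|\sum_{\alpha\in A_N(n)}\|g_\alpha\|_{L^2_\xi}^2.
\end{equation*}
The factorization $\Phi(n,n_1,n_2,n_3)=2(n-n_1)(n-n_3)$ turns membership in $A_N(n)$ into a divisor condition: writing $a=n-n_1$, $b=n-n_3$ with $|a|,|b|\geq 2$, we require $|ab|\leq N/2$. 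The divisor bound \eqref{num} then gives $|A_N(n)|\lesssim \sum_{0<|m|\leq N/2}d(|m|)\lesssim N^{1+}$, uniformly in $n$. For the individual $L^2_\xi$-norms I proceed exactly as in Lemma \ref{lem}: for summands of type $Q^{1,t}_{II,n}$ and $Q^{1,t}_{III,n}$ the uniform boundedness of $\sigma_n$ and translation invariance of $\|\cdot\|_{L^2}$ give $\|g_\alpha\|_{L^2_\xi}\lesssim|w_{n_1}||w_{n_2}|\|v_{n_3}\|_{L^2(\R)}$, while for $Q^{1,t}_{IV,n}$ and $Q^{1,t}_{V,n}$ I substitute $\hat V_m=e^{it\xi^2}\hat v_m$, absorb the oscillating phase as in the proof of Lemma \ref{lem}, and use Plancherel together with Hölder and the boxwise embedding \eqref{Bern1} plus \eqref{Sch} to get $\|g_\alpha\|_{L^2_\xi}\lesssim|w_{n_j}|\|v_{n_1}\|_{L^2}\|v_{n_3}\|_{L^2}$, with $n_j$ the slot carrying the $w$-factor.

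Summing in $n$ I invoke Fubini: the constraint $n_1-n_2+n_3\approx n$ restricts $n$ to at most three values for each triple $(n_1,n_2,n_3)$, so the double sum $\sum_n\sum_{A_N(n)}$ is dominated by the unconstrained $\sum_{(n_1,n_2,n_3)}$. The resulting sums separate into products, producing
\begin{equation*}
\|N_{11,X}^t(v)\|_{\ell^2(\Z)L^2(\R)}^2 \lesssim N^{1+}\|w\|_{L^2(\T)}^4\|v\|_{L^2(\R)}^2 \quad\text{or}\quad N^{1+}\|w\|_{L^2(\T)}^2\|v\|_{L^2(\R)}^4,
\end{equation*}
depending on whether two or one of the factors in $Q^{1,t}_X$ is a $w$; this gives $N^{\frac12+}$ times exactly the norm products claimed. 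The Lipschitz estimate for the difference follows by telescoping each $Q^{1,t}_X$ in its $v$-slots, e.g.
\begin{equation*}
Q^{1,t}_X(v,\bar v,v)-Q^{1,t}_X(u,\bar u,u)=Q^{1,t}_X(v-u,\bar v,v)+Q^{1,t}_X(u,\overline{v-u},v)+Q^{1,t}_X(u,\bar u,v-u)
\end{equation*}
(and analogously for the $Q^{1,t}_X$ that are only linear in the $v$-slot), so each resulting term has a factor of $v-u$ and the bound already proved applies.

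The main technical obstacle is the uniform counting $|A_N(n)|\lesssim N^{1+}$. Without the subexponential divisor estimate \eqref{num} one would lose a full power of $N$ instead of $N^{\epsilon}$, and the fact that the count is $n$-independent is what makes the subsequent Fubini argument produce cleanly separated norm factors of $w$ and $v$. Everything else is a mild adaptation of the resonant estimates carried out in Lemma \ref{lem} and in \cite{NP}.
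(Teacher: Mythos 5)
Your proof is correct and follows essentially the same strategy as the paper: split by the five $Q^{1,t}_{X,n}$ operators, handle the $v$-only piece via \cite{NP}, use Cauchy--Schwarz over $A_N(n)$ together with the divisor bound \eqref{num} to extract the $N^{\frac12+}$ factor, and then sum in $n$ using the separable structure of the remaining product (the paper phrases this last step as Young's inequality in $\ell^1$ where you invoke Fubini, but the two are interchangeable here). The one small slip is the citation: the non-resonant estimate for $Q^{1,t}_{I,n}$ is Lemma~11 of \cite{NP}, not Lemma~10, which treats the resonant part and is invoked in Lemma~\ref{lem}.
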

\begin{proof}
The part
$$\sum_{A_{N}(n)}Q^{1,t}_{I,n}(v_{n_{1}},\bar{v}_{n_{2}},v_{n_{3}})$$
has been estimated in \cite{NP}, Lemma $11$ giving an upper bound of the form $N^{\frac12+}\|v\|_{2}^{3}$. 
For the sum that contains $Q^{1,t}_{II,n}$ and $Q^{1,t}_{III,n}$ it suffices to estimate only $Q^{1,t}_{II,n}$, the other one being similar. We have
$$\sum_{A_{N}(n)}\Big\|\sigma_{n}(\xi)e^{-2it(\xi-n_{1})(n_{1}-n_{2})}w_{n_{1}}\bar{w}_{n_{2}}\hat{v}_{n_{3}}(\xi-n_{1}+n_{2})\Big\|_{L^{2}(\R)}\lesssim\sum_{A_{N}(n)}|w_{n_{1}}||w_{n_{2}}|\|v_{n_{3}}\|_{2},$$
which by H\"older's inequality implies the estimate
\begin{equation}
\label{ghg}
\Big(\sum_{A_{N}(n)}1\Big)^{\frac12}\Big(\sum_{A_{N}(n)}|w_{n_{1}}|^{2}|w_{n_{2}}|^{2}\|v_{n_{3}}\|_{2}^{2}\Big)^{\frac12}.
\end{equation}
The first factor is estimated by $N^{\frac12+}$ with the use of \eqref{num} and then, by taking the $l^{2}(\Z)$ norm of 
the second sum and applying Young's inequality in $l^{1}(\Z)$, 
we obtain the upper bound 
$$N^{\frac12+}\Big(\sum_{n\in\Z}\sum_{A_{N}(n)}|w_{n_{1}}|^{2}|w_{n_{2}}|^{2}\|v_{n_{3}}\|_{2}^{2}\Big)^{\frac12}\leq N^{\frac12+}\|w\|_{L^{2}(\T)}^{2}\|v\|_{L^{2}(\R)}.$$ 

For the sum that contains $Q^{1,t}_{IV.n}$ and $Q^{1,t}_{V,n}$ it 
again suffices to estimate only $Q^{1,t}_{V,n}$. In this case, 
letting $\hat{V}_{n}=e^{it\xi^{2}}\hat{v}_{n}$, we have 
$$\sum_{A_{N}(n)}\Big\|\sigma_{n}(\xi)\bar{w}_{n_{2}}\int_{\R}e^{-2it(\xi-\xi_{1})(\xi_{1}-n_{2})}\hat{v}_{n_{1}}(\xi_{1})\hat{v}_{n_{3}}(\xi-\xi_{1}+n_{2})d\xi_{1}\Big\|_{L^{2}(\R)}\lesssim$$
$$\sum_{A_{N}(n)}|w_{n_{2}}|\Big\|\hat{V}_{n_{1}}\ast\hat{V}_{n_{3}}(\cdot+n_{2})\Big\|_{L^{2}(\R)}=\sum_{A_{N}(n)}|w_{n_{2}}|\Big\|V_{n_{1}}V_{n_{3}}\Big\|_{L^{2}(\R)}\leq$$
$$\sum_{A_{N}(n)}|w_{n_{2}}|\|V_{n_{1}}\|_{4}\|V_{n_{3}}\|_{4}\lesssim\sum_{A_{N}(n)}|w_{n_{2}}|\|V_{n_{1}}\|_{2}\|V_{n_{3}}\|_{2}=\sum_{A_{N}(n)}|w_{n_{2}}|\|v_{n_{1}}\|_{2}\|v_{n_{3}}\|_{2},$$
where we used \eqref{Bern1} and \eqref{Sch}. Then the estimate continues as in \eqref{ghg} giving the upper bound 
$N^{\frac12+}\|w\|_{2}\|v\|_{2}^{2}$. 
\end{proof}
For the $N_{12}^{t}$ operator we only look at frequencies where $|\Phi(n,n_{1},n_{2},n_{3})|>N$, 
which means that we can apply the differentiation by parts techniques, in order to take advantage of possible cancellations, due to the 
 fact that the exponential terms contain the phase factor $\Phi(n,n_{1},n_{2},n_{3})$, having a large magnitude. 
 By doing this separately to the $Q^{1,t}_{I,n}\ldots, Q^{1,t}_{V,n}$ operators we obtain the following expressions 
\begin{equation}
\begin{split}
\label{ttr1}
&\partial_{t}\Big(\overbrace{\sigma_{n}(\xi)\int_{\mathbb R^2}\frac{e^{-2it(\xi-\xi_{1})(\xi-\xi_{3})}}{-2i(\xi-\xi_{1})(\xi-\xi_{3})}\ \hat{v}_{n_{1}}(\xi_{1})\hat{\bar{v}}_{n_{2}}(\xi-\xi_{1}-\xi_{3})\hat{v}_{n_{3}}(\xi_{3})\ d\xi_{1}d\xi_{3}}^{\eqqcolon \mathcal F(\tilde{Q}^{1,t}_{I,n})}\Big) \\
&- \, \overbrace{\sigma_{n}(\xi)\int_{\mathbb R^2}\frac{e^{-2it(\xi-\xi_{1})(\xi-\xi_{3})}}{-2i(\xi-\xi_{1})(\xi-\xi_{3})}\partial_{t}\Big(\hat{v}_{n_{1}}(\xi_{1})\hat{\bar{v}}_{n_{2}}(\xi-\xi_{1}-\xi_{3})\hat{v}_{n_{3}}(\xi_{3})\Big)\ d\xi_{1}d\xi_{3}}^{\eqqcolon \mathcal F(T^{1,t}_{I,n})},
\end{split}
\end{equation}
\begin{equation}
\begin{split}
\label{ttr2}
&\partial_{t}\Big(\overbrace{\sigma_{n}(\xi)\ \frac{e^{-2it(\xi-n_{1})(n_{1}-n_{2})}}{-2i(\xi-n_{1})(n_{1}-n_{2})}\ w_{n_{1}}\bar{w}_{n_{2}}\hat{v}_{n_{3}}(\xi-n_{1}+n_{2})}^{\eqqcolon \mathcal F(\tilde{Q}^{1,t}_{II,n})}\Big) \\
&- \, \overbrace{\sigma_{n}(\xi)\ \frac{e^{-2it(\xi-n_{1})(n_{1}-n_{2})}}{-2i(\xi-n_{1})(n_{1}-n_{2})}\ \partial_{t}\Big(w_{n_{1}}\bar{w}_{n_{2}}\hat{v}_{n_{3}}(\xi-n_{1}+n_{2})\Big)}^{\eqqcolon \mathcal F(T^{1,t}_{II,n})},
\end{split}
\end{equation}
\begin{equation}
\begin{split}
\label{ttr3}
&\partial_{t}\Big(\overbrace{\sigma_{n}(\xi)\ \frac{e^{-2it(\xi-n_{1})(\xi-n_{3})}}{-2i(\xi-n_{1})(\xi-n_{3})}\ w_{n_{1}}\hat{\bar{v}}_{n_{2}}(\xi-n_{1}-n_{3})w_{n_{3}}}^{\eqqcolon \mathcal F(\tilde{Q}^{1,t}_{III,n})}\Big)\\
&- \, \overbrace{\sigma_{n}(\xi)\ \frac{e^{-2it(\xi-n_{1})(\xi-n_{3})}}{-2i(\xi-n_{1})(\xi-n_{3})}\ \partial_{t}\Big(w_{n_{1}}\hat{\bar{v}}_{n_{2}}(\xi-n_{1}-n_{3})w_{n_{3}}\Big)}^{\eqqcolon \mathcal F(T^{1,t}_{III,n})}
\end{split}
\end{equation}

\begin{equation}
\begin{split}
\label{ttr4}
&\partial_{t}\Big(\overbrace{\sigma_{n}(\xi)w_{n_{3}}\ \int_{\R}\frac{e^{-2it(\xi-n_{3})(\xi-\xi_{1})}}{-2i(\xi-n_{3})(\xi-\xi_{1})}\ \hat{v}_{n_{1}}(\xi_{1})\hat{\bar{v}}_{n_{2}}(\xi-\xi_{1}-n_{3})d\xi_{1}}^{\eqqcolon \mathcal F(\tilde{Q}^{1,t}_{IV.n})}\Big) \\
&- \, \overbrace{\sigma_{n}(\xi)\ \int_{\R}\frac{e^{-2it(\xi-n_{3})(\xi-\xi_{1})}}{-2i(\xi-n_{3})(\xi-\xi_{1})}\ \partial_{t}\Big(\hat{v}_{n_{1}}(\xi_{1})\hat{\bar{v}}_{n_{2}}(\xi-\xi_{1}-n_{3})w_{n_{3}}\Big)d\xi_{1}}^{\eqqcolon \mathcal F(T^{1,t}_{IV,n})}
\end{split}
\end{equation}
and
\begin{equation}
\begin{split}
\label{ttr5}
&\partial_{t}\Big(\overbrace{\sigma_{n}(\xi)\bar{w}_{n_{2}}\ \int_{\R}\frac{e^{-2it(\xi-\xi_{1})(\xi_{1}-n_{2})}}{-2i(\xi-\xi_{1})(\xi_{1}-n_{2})}\ \hat{v}_{n_{1}}(\xi_{1})\hat{v}_{n_{3}}(\xi-\xi_{1}+n_{2})d\xi_{1}}^{\eqqcolon \mathcal F(\tilde{Q}^{1,t}_{V,n})}\Big)- \\
&\overbrace{\sigma_{n}(\xi)\ \int_{\R}\frac{e^{-2it(\xi-\xi_{1})(\xi_{1}-n_{2})}}{-2i(\xi-\xi_{1})(\xi_{1}-n_{2})}\ \partial_{t}\Big(\hat{v}_{n_{1}}(\xi_{1})\bar{w}_{n_{2}}\hat{v}_{n_{3}}(\xi-\xi_{1}+n_{2})\Big)d\xi_{1}}^{\mathcal F(T^{1,t}_{V,n})}.
\end{split}
\end{equation}
This allows us to express 
\begin{equation}
\begin{split}
\label{nex}
N_{12}^{t}(v)
&= \sum_{A_{N}(n)}\Big(Q^{1,t}_{I,n}+Q^{1,t}_{II,n}+Q^{1,t}_{III,n}+Q^{1,t}_{IV,n}+Q^{1,t}_{V,n}\Big) \\
&= \partial_{t}\Big(\overbrace{\sum_{A_{N}(n)}\Big(\tilde{Q}^{1,t}_{I,n}+\tilde{Q}^{1,t}_{II,n}+\tilde{Q}^{1,t}_{III,n}+\tilde{Q}^{1,t}_{IV,n}+\tilde{Q}^{1,t}_{V,n}\Big)}^{\eqqcolon N_{21}^{t}(v)}\Big) \\      
&\phantom{+~}+\, \overbrace{\sum_{A_{N}(n)}\Big(T^{1,t}_{I,n}+T^{1,t}_{II,n}+T^{1,t}_{III,n}+T^{1,t}_{IV,n}+T^{1,t}_{V,n}\Big)}^{\eqqcolon N_{22}^{t}(v)}.
\end{split}
\end{equation}
At this point let us also define the operators 
\begin{align}
\label{rr1}
\mathcal F(R^{1,t}_{I,n}(v_{n_{1}},\bar{v}_{n_{2}},v_{n_{3}}))(\xi)
	&= \sigma_{n}(\xi)\int_{\mathbb R^2}\frac{\hat{v}_{n_{1}}(\xi_{1})\hat{\bar{v}}_{n_{2}}(\xi-\xi_{1}-\xi_{3})\hat{v}_{n_{3}}(\xi_{3})}{(\xi-\xi_{1})(\xi-\xi_{3})}\ d\xi_{1}d\xi_{3}\, , 
		\\ 
\label{rr2}
\mathcal F(R^{1,t}_{II,n}(w_{n_{1}},\bar{w}_{n_{2}},v_{n_{3}}))(\xi)
	& =\sigma_{n}(\xi)\ \frac{w_{n_{1}}\bar{w}_{n_{2}}\hat{v}_{n_{3}}(\xi-n_{1}+n_{2})}{(\xi-n_{1})(n_{1}-n_{2})} \, , \\
\label{rr3}
\mathcal F(R^{1,t}_{III,n}(w_{n_{1}},\bar{v}_{n_{2}},w_{n_{3}}))(\xi)
	&= \sigma_{n}(\xi)\ \frac{w_{n_{1}}\hat{\bar{v}}_{n_{2}}(\xi-n_{1}-n_{3})w_{n_{3}}}{(\xi-n_{1})(\xi-n_{3})} \, , \\
\label{rr4}
\mathcal F(R^{1,t}_{IV,n}(v_{n_{1}},\bar{v}_{n_{2}},w_{n_{3}}))(\xi)
	&= \sigma_{n}(\xi)w_{n_{3}}\int_{\R}\frac{\hat{v}_{n_{1}}(\xi_{1})\hat{\bar{v}}_{n_{2}}(\xi-\xi_{1}-n_{3})}{(\xi-n_{3})(\xi-\xi_{1})}\ d\xi_{1}\, ,  \\
\label{rr5}
\mathcal F(R^{1,t}_{V,n}(v_{n_{1}},\bar{w}_{n_{2}},v_{n_{3}}))(\xi)
	&= \sigma_{n}(\xi)\bar{w}_{n_{2}}\int_{\R}\frac{\hat{v}_{n_{1}}(\xi_{1})\hat{v}_{n_{3}}(\xi-\xi_{1}+n_{2})}{(\xi-\xi_{1})(\xi_{1}-n_{2})}\ d\xi_{1} \, ,
\end{align}
and observe that, if we let 
\begin{equation}
\label{namee}
\hat{V}_{n}=e^{it\xi^{2}}\hat{v}_{n},\ W_{n}=e^{itn^{2}}w_{n}\, ,
\end{equation} 
then 
\begin{align}
\label{rela1}
\mathcal F(\tilde{Q}^{1,t}_{I,n}(v_{n_{1}},\bar{v}_{n_{2}},v_{n_{3}}))(\xi)
	&= e^{-it\xi^{2}}\mathcal F(R^{1,t}_{I,n}(V_{n_{1}},\bar{V}_{n_{2}},V_{n_{3}}))(\xi) \, , \\
\label{rela2}
\mathcal F(\tilde{Q}^{1,t}_{II,n}(w_{n_{1}},\bar{w}_{n_{2}},v_{n_{3}}))(\xi)
	&= e^{-it\xi^{2}}\mathcal F(R^{1,t}_{II,n}(W_{n_{1}},\bar{W}_{n_{2}},V_{n_{3}}))(\xi) \, ,\\
\label{rela3}
\mathcal F(\tilde{Q}^{1,t}_{III,n}(w_{n_{1}},\bar{v}_{n_{2}},w_{n_{3}}))(\xi)
	&= e^{-it\xi^{2}}\mathcal F(R^{1,t}_{III,n}(W_{n_{1}},\bar{V}_{n_{2}},W_{n_{3}}))(\xi) \, ,\\
\label{rela4}
\mathcal F(\tilde{Q}^{1,t}_{IV,n}(v_{n_{1}},\bar{v}_{n_{2}},w_{n_{3}}))(\xi)
	&= e^{-it\xi^{2}}\mathcal F(R^{1,t}_{IV,n}(V_{n_{1}},\bar{V}_{n_{2}},W_{n_{3}}))(\xi) \\
\label{rela5}
\mathcal F(\tilde{Q}^{1,t}_{V,n}(v_{n_{1}},\bar{w}_{n_{2}},v_{n_{3}}))(\xi)
	& =e^{-it\xi^{2}}\mathcal F(R^{1,t}_{V,n}(V_{n_{1}},\bar{W}_{n_{2}},V_{n_{3}}))(\xi). 
\end{align}
Also notice that, writing out the Fourier transforms of the functions inside the integral of \eqref{rr1}, it is not difficult to see 
\begin{equation}
\label{main7}
R^{1,t}_{I,n}(v_{n_{1}},\bar{v}_{n_{2}},v_{n_{3}})(x)=\int_{\mathbb R^3}K^{(1)}_{n}(x,x_{1},y,x_{3})v_{n_{1}}(x)\bar{v}_{n_{2}}(y)v_{n_{3}}(x_{3})\ dx_{1}dydx_{3},
\end{equation}
where
$$K^{(1)}_{n}(x,x_{1},y,x_{3})=\int_{\mathbb R^3}e^{i\xi_{1}(x-x_{1})+i\eta(x-y)+i\xi_{3}(x-x_{3})}\ \frac{\sigma_{n}(\xi_{1}+\eta+\xi_{3})}{(\eta+\xi_{1})(\eta+\xi_{3})}\ d\xi_{1}d\eta d\xi_{3}=$$
$$\mathcal F^{-1}\rho^{(1)}_{n}(x-x_{1},x-y,x-x_{3})$$
and 
$$\rho_{n}^{(1)}(\xi_{1},\eta,\xi_{3})=\frac{\sigma_{n}(\xi_{1}+\eta+\xi_{3})}{(\eta+\xi_{1})(\eta+\xi_{3})}.$$

\begin{remark}
\label{asbefore}
The operators $\tilde{Q}^{1,t}_{I,n}$ and $R^{1,t}_{I,n}$ are the same as the operators $\tilde{Q}^{1,t}_{n}$ and $R^{1,t}_{n}$ studied in \cite[Lemma 12]{CHKP} and \cite[Lemma 12]{NP}. Also notice that for $\hat{v}_{n_{2}}=w_{n_{2}}\delta_{n_{2}}$ and $\hat{v}_{n_{1}}=w_{n_{1}}\delta_{n_{1}}$ we have $R^{1,t}_{I,n}(v_{n_{1}},\bar{v}_{n_{2}},v_{n_{3}})=R^{1,t}_{II,n}(w_{n_{1}},\bar{w}_{n_{2}},v_{n_{3}})$. Similar relations hold between $R^{1,t}_{I,n}$ and the remaining operators $R^{1,t}_{III,n}, R^{1,t}_{IV,n}$ and $R^{1,t}_{V,n}$. 
\end{remark}

\begin{lemma} \label{fir}
 For fixed $n, n_1, n_2, n_3$, the multilinear operators defined in \eqref{rr1}--\eqref{rr5} are bounded by 
\begin{align*}
\|R^{1,t}_{I,n}(v_{n_{1}},\bar{v}_{n_{2}},v_{n_{3}})\|_{L^{2}(\R)}&\lesssim  \frac{\|v_{n_{1}}\|_{2}\|v_{n_{2}}\|_{2}\|v_{n_{3}}\|_{2}}{|n-n_{1}||n-n_{3}|},\\
\|R^{1,t}_{II,n}(w_{n_{1}},\bar{w}_{n_{2}},v_{n_{3}})\|_{L^{2}(\R)}&\lesssim  \frac{|w_{n_{1}}||w_{n_{2}}|\|v_{n_{3}}\|_{2}}{|n-n_{1}||n-n_{3}|},\\
\|R^{1,t}_{III,n}(w_{n_{1}},\bar{v}_{n_{2}},w_{n_{3}})\|_{L^{2}(\R)}&\lesssim  \frac{|w_{n_{1}}|\|v_{n_{2}}\|_{2}|w_{n_{3}}|}{|n-n_{1}||n-n_{3}|},\\
\|R^{1,t}_{IV,n}(v_{n_{1}},\bar{v}_{n_{2}},w_{n_{3}})\|_{L^{2}(\R)}&\lesssim\frac{\|v_{n_{1}}\|_{2}\|v_{n_{2}}\|_{2}|w_{n_{3}}|}{|n-n_{1}||n-n_{3}|}\, ,\\
\intertext{and}
\|R^{1,t}_{V,n}(v_{n_{1}},\bar{w}_{n_{2}},v_{n_{3}})\|_{L^{2}(\R)}&\lesssim\frac{\|v_{n_{1}}\|_{2}|w_{n_{2}}|\|v_{n_{3}}\|_{2}}{|n-n_{1}||n-n_{3}|}.
\end{align*}
where the implicit constants do not depend on $n, n_1, n_2, n_3$.  
\end{lemma}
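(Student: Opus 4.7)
The plan is to prove all five bounds in parallel by passing to the Fourier side via Plancherel's theorem and exploiting the compact-support structure imposed by $\sigma_n$ and by the box operators $\Box_{n_j}$. In each of the integrands in \eqref{rr1}--\eqref{rr5} one has $\xi \in \mathrm{supp}(\sigma_n) \subset (n-1, n+1)$ and $\xi_j \in \mathrm{supp}(\hat v_{n_j}) \subset (n_j-1, n_j+1)$ for every index $j \in \{1, 3\}$ appearing as an integration variable. Since $n_1, n_3 \not\approx n$ forces $|n - n_1|, |n - n_3| \ge 2$, the support constraints yield the pointwise lower bounds $|\xi - \xi_j| \gtrsim |n - n_j|$ (valid as soon as $|n-n_j|\ge 3$), $|\xi - n_j| \ge |n-n_j|-1 \ge |n-n_j|/2$, and, for the $|\xi_1 - n_2|$ factor appearing in $R^{1,t}_{V,n}$, $|\xi_1 - n_2| \gtrsim |n - n_3|$ via the near-resonance constraint $n_1 - n_2 + n_3 \approx n$ combined with $|n - n_3| \ge 2$.

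For $R^{1,t}_{I,n}$, pulling the two denominators out of the integrand in absolute value gives, via Plancherel,
$$
\|R^{1,t}_{I,n}\|_{L^2(\R)} \,=\, \|\mathcal F(R^{1,t}_{I,n})\|_{L^2(\R)} \,\lesssim\, \frac{1}{|n-n_1||n-n_3|} \left\|\sigma_n(\xi)\,\big(|\hat v_{n_1}|*|\hat{\bar v}_{n_2}|*|\hat v_{n_3}|\big)(\xi)\right\|_{L^2_\xi}.
$$
Since each $|\hat v_{n_j}|$ is an $L^2$ function supported in an interval of length $2$, iterated Young-type estimates ($L^2 * L^2 \hookrightarrow L^\infty$ by Cauchy--Schwarz, then $L^\infty \hookrightarrow L^2$ on compact supports) control the trilinear convolution by $\prod_j \|\hat v_{n_j}\|_2 = \prod_j \|v_{n_j}\|_2$, which produces the first bound. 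The other four operators are handled by the same template. For $R^{1,t}_{II,n}$ the scalar factors $w_{n_1},\bar w_{n_2}$ come out of the integral, the denominator $|\xi - n_1|\gtrsim|n-n_1|$ as above, and $|n_1 - n_2| \gtrsim |n - n_3|$ by the near-resonance relation, leaving $\|\sigma_n(\xi)\hat v_{n_3}(\xi-n_1+n_2)\|_{L^2}\lesssim\|v_{n_3}\|_2$. The bound for $R^{1,t}_{III,n}$ is immediate from two integer-based denominators. Both $R^{1,t}_{IV,n}$ and $R^{1,t}_{V,n}$ combine one integer-based denominator (handled pointwise) with a single $\xi_1$-convolution of two compactly supported $L^2$ functions, which yields exactly one $\|v_{n_j}\|_2$ factor as in the $R^{1,t}_{I,n}$ argument.

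The main technical subtlety, shared by all five cases, is the borderline regime $|n-n_j|=2$, where $\mathrm{supp}(\sigma_n)$ and $\mathrm{supp}(\hat v_{n_j})$ abut at a single endpoint and the pointwise lower bound $|\xi-\xi_j|\gtrsim|n-n_j|$ fails arbitrarily close to that common endpoint. Since $1/|n-n_j|$ is of order one in this regime, the missing factor is absorbed into the implicit constant, and it suffices to prove the corresponding $L^2\to L^2$ bound without the gain. This follows from the $L^2$-boundedness of the truncated operator $G\mapsto\chi_{(n-1,n+1)}(\xi)\int G(\xi_1)(\xi-\xi_1)^{-1}d\xi_1$ for $G$ supported in $(n_j-1,n_j+1)$, which is a standard consequence of Hilbert transform theory on the line. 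With this edge case dispatched, the uniform estimate on all five operators follows from the pattern described above.
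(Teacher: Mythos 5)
Your overall strategy---passing to the Fourier side, using the compact supports of $\sigma_n$ and the $\hat v_{n_j}$ to bound the kernel's denominators pointwise, and finishing with Cauchy--Schwarz/Young---is essentially the same as the paper's duality argument, so the main thrust is fine. Where you go wrong is the ``main technical subtlety'' you invent and the fix you propose for it.

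First, the boundary regime $|n-n_j|=2$ is a non-issue. The paper requires $\operatorname{supp}(\sigma_0)\subseteq\{\xi:|\xi|<1\}$. Since the support of a function is, by definition, a \emph{closed} set, a closed subset of the open interval $(-1,1)$ is compact, hence contained in $[-1+\delta,1-\delta]$ for some $\delta>0$. Consequently $\operatorname{supp}(\sigma_n)\subset[n-1+\delta,n+1-\delta]$ and likewise for $\hat v_{n_j}=\sigma_{n_j}\hat v$, so for $|n-n_j|=2$ one has $|\xi-\xi_j|\ge 2\delta>0$ uniformly: the two supports never abut. Thus the pointwise lower bound $|\xi-\xi_j|\gtrsim|n-n_j|$ (with implicit constant depending on $\delta$) is valid for \emph{all} $|n-n_j|\ge 2$, and your entire last paragraph is superfluous once this is noticed. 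This is exactly the observation the paper leans on implicitly, both here and, for $R^{1,t}_{I,n}$, in the cited Lemma $12$ of \cite{CHKP} and \cite{NP}.

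Second, even setting that aside, the Hilbert-transform patch as stated does not carry the weight you assign it. The inner integrals in \eqref{rr1}, \eqref{rr4}, \eqref{rr5} are \emph{not} of the form $\int G(\xi_1)(\xi-\xi_1)^{-1}\,d\xi_1$ with $G$ independent of $\xi$: they contain the coupling factor $\hat{\bar v}_{n_2}(\xi-\xi_1-\cdot)$, which depends on $\xi$, so the operator is a genuinely bilinear object rather than a composition ``multiply by $\chi$ then apply $H$.'' For $R^{1,t}_{I,n}$ matters are worse still, since the denominator $(\xi-\xi_1)(\xi-\xi_3)$ involves two integration variables coupled through $\hat{\bar v}_{n_2}(\xi-\xi_1-\xi_3)$, and the case where both $|n-n_1|=|n-n_3|=2$ is not reduced to classical Hilbert-transform theory by the argument you give. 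So the one paragraph you added to ``dispatch the edge case'' is both unnecessary and, as written, incorrect; drop it, and instead record the compactness of $\operatorname{supp}(\sigma_0)$ inside the open unit ball, which makes the pointwise kernel estimate uniform in the full non-resonant range $|n-n_1|,|n-n_3|\ge 2$.
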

\begin{proof}
As mentioned in Remark \ref{asbefore} the operator $R^{1,t}_{I,n}$ was estimated in \cite{CHKP} and \cite{NP}. 

For $R^{1,t}_{II,n}, R^{1,t}_{III,n}$ the estimate is obvious since 
$\xi\in\mbox{supp}(\sigma_{n})$, otherwise the integrand is zero. 

For $R^{1,t}_{IV,n}, R^{1,t}_{V,n}$ it suffices to estimate only $R^{1,t}_{IV,n}$ since for $R^{1,t}_{V,n}$ similar considerations apply. To bound $\|R^{1,t}_{IV,n}(v_{n_{1}},\bar{v}_{n_{2}},w_{n_{3}})\|_{L^{2}(\R)}$ let $g\in L^{2}(\R), I_{n_{1}}=\mbox{supp}(\hat{v}_{n_{1}}), I_{n_{2}}=\mbox{supp}(\hat{\bar{v}}_{n_{2}})$, and consider the duality pairing
\begin{multline*}
\langle g,   R^{1,t}_{IV,n}(v_{n_{1}},\bar{v}_{n_{2}},w_{n_{3}})\rangle_{L^{2}(\R)} =  \Big|\int_{\R^{2}}\hat{g}(\xi)\ \sigma_{n}(\xi)\ \frac{w_{n_{3}}\hat{v}_{n_{1}}(\xi_{1})\hat{\bar{v}}_{n_{2}}(\xi-\xi_{1}-n_{3})}{(\xi-n_{3})(\xi-\xi_{1})}\ d\xi_{1} d\xi\Big| \\
= |w_{n_{3}}|\Big|\int_{\R^2}\hat{g}(\xi_{1}+\eta+n_{3})\ \sigma_{n}(\xi_{1}+\eta+n_{3})\ \frac{\hat{v}_{n_{1}}(\xi_{1})\hat{\bar{v}}_{n_{2}}(\eta)}{(\eta+\xi_{1})(\eta+n_{3})}\ d\xi_{1} d\eta\Big| \\
 \lesssim
\frac{\|\sigma_{n}\|_{\infty}|w_{n_{3}}|}{|n-n_{1}||n-n_{3}|}\int_{I_{n_{1}}}\int_{I_{n_{2}}}|\hat{g}(\xi_{1}+\eta+n_{3})||\hat{v}_{n_{1}}(\xi_{1})||\hat{\bar{v}}_{n_{2}}(\eta)|\ d\xi_{1} d\eta \\
\lesssim
\frac{|w_{n_{3}}|\|v_{n_{1}}\|_{2}\|v_{n_{2}}\|_{2}}{|n-n_{1}||n-n_{3}|}\ \|g\|_{2}\ |I_{n_{1}}|^{\frac12},
\end{multline*}
where we used that $\xi_{1}\in I_{n_{1}}, -\eta\in I_{n_{2}}, \xi\in\mbox{supp}(\sigma_{n})$ and H\"older's inequality. 
\end{proof}

\begin{remark}
\label{expl}
Notice that the same proof implies the following bounds
$$\|Q_{I,n}^{1,t}(v_{n_{1}},\bar{v}_{n_{2}},v_{n_{3}})\|_{L^{2}(\R)}\lesssim\|v_{n_{1}}\|_{2}\|v_{n_{2}}\|_{2}\|v_{n_{3}}\|_{2}$$
$$\|Q_{II,n}^{1,t}(w_{n_{1}},\bar{w}_{n_{2}},v_{n_{3}})\|_{L^{2}(\R)}\lesssim|w_{n_{1}}||w_{n_{2}}|\|v_{n_{3}}\|_{2}$$
$$\|Q_{III,n}^{1,t}(w_{n_{1}},\bar{v}_{n_{2}},w_{n_{3}})\|_{L^{2}(\R)}\lesssim|w_{n_{1}}|\|v_{n_{2}}\|_{2}|w_{n_{3}}|$$
$$\|Q_{IV,n}^{1,t}(v_{n_{1}},\bar{v}_{n_{2}},w_{n_{3}})\|_{L^{2}(\R)}\lesssim\|v_{n_{1}}\|_{2}\|v_{n_{2}}\|_{2}|w_{n_{3}}|$$
$$\|Q_{V,n}^{1,t}(v_{n_{1}},\bar{w}_{n_{2}},v_{n_{3}})\|_{L^{2}(\R)}\lesssim\|v_{n_{1}}\|_{2}|w_{n_{2}}|\|v_{n_{3}}\|_{2},$$
which will be used later in Lemmata \ref{finaal2} and \ref{dankda}. 
\end{remark}
For the $N_{21}^{t}$ operator the following bound holds 

\begin{lemma}
\label{fir1}
$$\|N_{21}^{t}(v)\|_{l^{2}(\Z)L^{2}(\R)}\lesssim N^{-\frac12+}(\|v\|^{3}_{L^{2}(\R)}+\|w\|_{L^{2}(\T)}\|v\|^{2}_{L^{2}(\R)}+\|w\|^{2}_{L^{2}(\T)}\|v\|_{L^{2}(\R)})$$
and
$$\|N_{21}^{t}(v)-N_{21}^{t}(u)\|_{l^{2}(\Z)L^{2}(\R)}\lesssim$$
$$N^{-\frac12+}(\|v\|^{2}_{L^{2}(\R)}+\|u\|^{2}_{L^{2}(\R)}+\|w\|_{L^{2}(\T)}(\|v\|_{L^{2}(\R)}+\|u\|_{L^{2}(\R)})+\|w\|^{2}_{L^{2}(\T)})\|v-u\|_{L^{2}(\R)}.$$
\end{lemma}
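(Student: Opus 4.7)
The plan is to apply Lemma \ref{fir} termwise and then do an $l^{2}$--summation trick that exploits the lower bound $|\Phi(n,n_{1},n_{2},n_{3})|=2|n-n_{1}||n-n_{3}|>N$ available on $A_{N}(n)^{c}$, together with the divisor estimate \eqref{num}. (Note that the sum defining $N_{21}^{t}$ in \eqref{nex} is over $A_{N}(n)^{c}$, not $A_{N}(n)$, as is clear from the construction preceding \eqref{nex}.)

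First, by the relations \eqref{rela1}--\eqref{rela5} and the $L^{2}$--isometry of $e^{-it\xi^{2}}$, we have
\begin{equation*}
\|\tilde Q^{1,t}_{\star,n}\|_{L^{2}(\R)}=\|R^{1,t}_{\star,n}\|_{L^{2}(\R)}
\qquad(\star\in\{I,II,III,IV,V\}),
\end{equation*}
since $\|V_{n}\|_{2}=\|v_{n}\|_{2}$ and $|W_{n}|=|w_{n}|$. Applying Lemma \ref{fir}, each summand in $N_{21}^{t}(v)(n)$ is controlled by a quantity of the form $\frac{a_{n_{1}}a_{n_{2}}a_{n_{3}}}{|n-n_{1}||n-n_{3}|}$ with $a_{k}\in\{\|v_{k}\|_{L^{2}(\R)},|w_{k}|\}$ depending on the slot.

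Next, I would apply the Cauchy--Schwarz inequality in the $A_{N}(n)^{c}$--sum to decouple the denominator from the function norms:
\begin{equation*}
\|N_{21}^{t}(v)(n)\|_{L^{2}(\R)}^{2}
\;\lesssim\; \Bigl(\sum_{A_{N}(n)^{c}}\tfrac{1}{|n-n_{1}|^{2}|n-n_{3}|^{2}}\Bigr)\,\Bigl(\sum_{A_{N}(n)^{c}}(a_{n_{1}}a_{n_{2}}a_{n_{3}})^{2}\Bigr).
\end{equation*}
For the first factor, writing $m=|n-n_{1}||n-n_{3}|$ and counting, for fixed $m$, pairs $(n-n_{1},n-n_{3})$ by the divisor function (using \eqref{num}), together with the fact that $n_{2}$ is then essentially determined by the near--resonance condition $n_{1}-n_{2}+n_{3}\approx n$, yields
\begin{equation*}
\sum_{A_{N}(n)^{c}}\tfrac{1}{|n-n_{1}|^{2}|n-n_{3}|^{2}}\;\lesssim\;\sum_{m>N/2}\tfrac{d(m)}{m^{2}}\;\lesssim\;N^{-1+}
\end{equation*}
uniformly in $n$. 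For the second factor, I would sum in $n$ and use that the constraint $n_{1}-n_{2}+n_{3}\approx n$ allows a Young--type bound: summing freely in $n_{1},n_{2},n_{3}$ gives
\begin{equation*}
\sum_{n}\sum_{A_{N}(n)^{c}}a_{n_{1}}^{2}a_{n_{2}}^{2}a_{n_{3}}^{2}\;\lesssim\;\|a^{(1)}\|_{l^{2}}^{2}\|a^{(2)}\|_{l^{2}}^{2}\|a^{(3)}\|_{l^{2}}^{2},
\end{equation*}
where each $\|a^{(j)}\|_{l^{2}}$ equals either $\|v\|_{L^{2}(\R)}$ (by Plancherel for $v_{n}=\Box_{n}v$) or $\|w\|_{L^{2}(\T)}$. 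Combining these and taking the square root produces the five contributions $N^{-\frac12+}\|v\|_{L^{2}(\R)}^{3}$, $N^{-\frac12+}\|w\|_{L^{2}(\T)}\|v\|_{L^{2}(\R)}^{2}$, and $N^{-\frac12+}\|w\|_{L^{2}(\T)}^{2}\|v\|_{L^{2}(\R)}$ stated in the lemma.

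The Lipschitz (difference) estimate follows by exactly the same scheme: each $\tilde Q^{1,t}_{\star,n}$ is trilinear in its three slots, so $\tilde Q^{1,t}_{\star,n}(v)-\tilde Q^{1,t}_{\star,n}(u)$ telescopes into a sum of three terms, each containing one factor of $v-u$ and two factors from $\{v,u,w\}$; applying the bounds of Lemma \ref{fir} slot by slot and then the same divisor/Cauchy--Schwarz argument gives the claimed estimate. The main obstacle to this plan is simply the bookkeeping of the five operator families $I$--$V$; the analytic core is identical to the $Q^{1,t}_{I,n}$--case already handled in \cite{NP,CHKP}, and the divisor bound \eqref{num} is what delivers the crucial $N^{-\frac12+}$ gain.
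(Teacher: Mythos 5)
Your proof is correct and follows essentially the same route as the paper's: use the relations \eqref{rela1}--\eqref{rela5} together with \eqref{Sch} and \eqref{namee} to reduce the $\tilde Q$--operators to the $R$--operators of Lemma \ref{fir}, apply Cauchy--Schwarz to decouple the denominator, bound $\sum_{A_N(n)^c}|n-n_1|^{-2}|n-n_3|^{-2}\lesssim N^{-1+}$ via the divisor bound \eqref{num}, and close with Young's inequality after taking the $l^2(\Z)$ norm. You also correctly point out that the summation in $N_{21}^t$ is over $A_N(n)^c$, matching the text preceding \eqref{nex} and the paper's own proof, despite the typo in \eqref{nex}.
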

\begin{proof}
The sum
$$\sum_{A_{N}(n)^{c}}\tilde{Q}^{1,t}_{I,n}$$
was estimated in \cite{NP} Lemma $14$ giving an upper bound of 
the form $N^{-\frac12+}\|v\|_{2}^{3}$. 

For the sum that contains $\tilde{Q}^{1,t}_{II,n}, \tilde{Q}^{1,t}_{III,n}$ it suffices to estimate
$$\sum_{A_{N}(n)^{c}}\Big\|\tilde{Q}^{1,t}_{II,n}(w_{n_{1}},\bar{w}_{n_{2}},v_{n_{3}})\Big\|_{L^{2}(\R)}=\sum_{A_{N}(n)^{c}}\Big\|R^{1,t}_{II,n}(W_{n_{1}},\bar{W}_{n_{2}},V_{n_{3}})\Big\|_{L^{2}(\R)},$$
where we used \eqref{rela2}, \eqref{Sch} and \eqref{namee}. By Lemma \ref{fir} and H\"older's inequality we obtain the upper bound 
$$\sum_{A_{N}(n)^{c}}\frac{|W_{n_{1}}||W_{n_{2}}\|V_{n_{3}}\|_{2}}{|n-n_{1}||n-n_{3}|}\leq\Big(\sum_{A_{N}(n)^{c}}\frac1{|n-n_{1}|^{2}|n-n_{3}|^{2}}\Big)^{\frac12}\Big(\sum_{A_{N}(n)^{c}}|W_{n_{1}}|^{2}|W_{n_{2}}|^{2}\|V_{n_{3}}\|_{2}^{2}\Big)^{\frac12}.$$
The first sum is estimated by $N^{-\frac12+}$ with the use of \eqref{num} and then by taking the $l^{2}(\Z)$ norm and applying Young's inequality in $l^{1}(\Z)$ we arrive at
$$N^{-\frac12+}\Big(\sum_{n\in\Z}\sum_{A_{N}(n)^{c}}|W_{n_{1}}|^{2}|W_{n_{2}}|^{2}\|V_{n_{3}}\|_{2}^{2}\Big)^{\frac12}\leq N^{-\frac12+}\|W\|_{L^{2}(\T)}^{2}\|V\|_{L^{2}(\R)}=$$
$$N^{-\frac12+}\|w\|_{L^{2}(\T)}^{2}\|v\|_{L^{2}(\R)},$$
where we also used \eqref{Sch}. 

For the sum that contains $\tilde{Q}^{1,t}_{IV,n}$ and $\tilde{Q}^{1,t}_{V,n}$ we use again Lemma \ref{fir} and a similar argument as above, we leave the details to the reader . 
\end{proof}
In order to use a similar strategy to bound  the operator 
$N_{22}^{t}$,  the last term in equation \eqref{nex}, we need to 
use equation \eqref{nperpe} for the terms where 
$\partial_{t}(w_{n})$ appears and \eqref{mainmain} for the terms where 
$\partial_{t}(v_{n})$ appears. Because of the nonlinearity 
$G(w,v)$ there will be $51$ new operators in total. For example, the summand
$$\sum_{A_{N}(n)^{c}}\sigma_{n}(\xi)\int_{\R^2}\frac{e^{-2it(\xi-\xi_{1})(\xi-\xi_{3})}}{(\xi-\xi_{1})(\xi-\xi_{3})}\ \partial_{t}(\hat{v}_{n_{1}}(\xi_{1}))\hat{\bar{v}}_{n_{2}}(\xi-\xi_{1}-\xi_{3})\hat{v}_{n_{3}}(\xi_{3})\ d\xi_{1} d\xi_{3}$$
equals
\begin{multline*}
\sum_{A_{N}(n)^{c}}\sigma_{n}(\xi)\int_{\R^2}\frac{e^{-2it(\xi-\xi_{1})(\xi-\xi_{3})}}{(\xi-\xi_{1})(\xi-\xi_{3})}\Big(\mathcal F(R_{2}^{t}(v)(n_{1}))(\xi_{1})-\mathcal F(R_{1}^{t}(v)(n_{1}))(\xi_{1})\Big)\hat{\bar{v}}_{n_{2}}(\xi-\xi_{1}-\xi_{3})\hat{v}_{n_{3}}(\xi_{3})\ d\xi_{1} d\xi_{3} \\
+\sum_{A_{N}(n)^{c}}\sigma_{n}(\xi)\int_{\R^2}\frac{e^{-2it(\xi-\xi_{1})(\xi-\xi_{3})}}{(\xi-\xi_{1})(\xi-\xi_{3})}\mathcal F(N_{1}^{t}(v)(n_{1}))(\xi_{1})\hat{\bar{v}}_{n_{2}}(\xi-\xi_{1}-\xi_{3})\hat{v}_{n_{3}}(\xi_{3})\ d\xi_{1} d\xi_{3}
\end{multline*}
the summand 
$$\sum_{A_{N}(n)^{c}}\sigma_{n}(\xi)\ \frac{e^{-2it(\xi-n_{1})(n_{1}-n_{2})}}{(\xi-n_{1})(n_{1}-n_{2})}\ \partial_{t}(w_{n_{1}})\bar{w}_{n_{2}}\hat{v}_{n_{3}}(\xi-n_{1}+n_{2})$$
equals
\begin{multline*}
\sum_{A_{N}(n)^{c}}\sigma_{n}(\xi)\ \frac{e^{-2it(\xi-n_{1})(n_{1}-n_{2})}}{(\xi-n_{1})(n_{1}-n_{2})}\Big(\mathscr R_{2}^{t}(w)(n_{1})-\mathscr R_{1}^{t}(w)(n_{1})\Big)\bar{w}_{n_{2}}\hat{v}_{n_{3}}(\xi-n_{1}+n_{2}) \\
+\sum_{A_{N}(n)^{c}}\sigma_{n}(\xi)\ \frac{e^{-2it(\xi-n_{1})(n_{1}-n_{2})}}{(\xi-n_{1})(n_{1}-n_{2})}\ \mathscr N_{1}^{t}(w)(n_{1})\bar{w}_{n_{2}}\hat{v}_{n_{3}}(\xi-n_{1}+n_{2})\, ,
\end{multline*}
and the summand
$$\sum_{A_{N}(n)^{c}}\sigma_{n}(\xi)\ \frac{e^{-2it(\xi-n_{1})(n_{1}-n_{2})}}{(\xi-n_{1})(n_{1}-n_{2})}\ w_{n_{1}}\bar{w}_{n_{2}}\partial_{t}(\hat{v}_{n_{3}}(\xi-n_{1}+n_{2}))$$
equals
\begin{multline*}
\sum_{A_{N}(n)^{c}}\sigma_{n}(\xi)\frac{e^{-2it(\xi-n_{1})(n_{1}-n_{2})}}{(\xi-n_{1})(n_{1}-n_{2})}w_{n_{1}}\bar{w}_{n_{2}}\Big(\mathcal F(R_{2}^{t}(v)(n_{3}))(\xi-n_{1}+n_{2})-\mathcal F(R_{1}^{t}(v)(n_{3}))(\xi-n_{1}+n_{2})\Big) \\
+\sum_{A_{N}(n)^{c}}\sigma_{n}(\xi)\ \frac{e^{-2it(\xi-n_{1})(n_{1}-n_{2})}}{(\xi-n_{1})(n_{1}-n_{2})}\ w_{n_{1}}\bar{w}_{n_{2}}\mathcal F(N_{1}^{t}(v)(n_{3}))(\xi-n_{1}+n_{2})\, .
\end{multline*}
All summands that contain the resonant operators $\mathscr R_{2}^{t}(w), \mathscr R_{1}^{t}(w), \mathcal F(R_{2}^{t}(v)), \mathcal F(R_{1}^{t}(v))$ are good in the sense that they are controllable and all summands that contain the non-resonant operators $\mathscr N_{1}^{t}(w), \mathcal F(N_{1}^{t}(v))$ need to be decomposed further into "small" frequencies which give good operators and "big" frequencies using differentiation by parts. 

In order to be able to consistently write all these summands in a 
closed form we need the tree notation similarly as it was introduced in \cite{GKO}, but with some modifications.

\end{section}

\begin{section}{colored trees and the infinite iteration process}
\label{treenotation}
A tree $T$ is a finite, partially ordered set with the following properties:

\begin{itemize}
\item For any $a_{1}, a_{2}, a_{3}, a_{4}\in T$ if $a_{4}\leq a_{2}\leq a_{1}$ and $a_{4}\leq a_{3}\leq a_{1}$ then $a_{2}\leq a_{3}$ or $a_{3}\leq a_{2}$. 
\item There exists a maximum element $r\in T$, that is $a\leq r$ for all $a\in T$ which is called the \textbf{root}. 
\end{itemize}
We call the elements of $T$ the \textbf{nodes} of the tree and in this content we will say that $b\in T$ is a \textbf{child} of $a\in T$ (or equivalently, that $a$ is the \textbf{parent} of $b$) if $b\leq a, b\neq a$ and for all $c\in T$ such that $b\leq c\leq a$ we have either $b=c$ or $c=a$. 

A node $a\in T$ is called \textbf{terminal} if it has no children. A \textbf{nonterminal} node $a\in T$ is a node with exactly $3$ children $a_{1},$ the left child, $a_{2},$ the middle child, and $a_{3},$ the right child. We define the sets
\begin{equation}
\label{setsetset}
T^{0}=\{\mbox{all nonterminal nodes}\},
\end{equation}
and
\begin{equation}
\label{setsetset1}
T^{\infty}=\{\mbox{all terminal nodes}\}.
\end{equation}
Obviously, $T=T^{0}\cup T^{\infty}$, $T^{0}\cap T^{\infty}=\emptyset$ and if $|T^{0}|=j\in\Z_{+}$ we have $|T|=3j+1$ and $|T^{\infty}|=2j+1$. We denote the collection of trees with $j$ parental nodes by
\begin{equation}
\label{setsetset2}
T(j)=\{T \ \mbox{is a tree with}\ |T|=3j+1\}.
\end{equation}

So far, the notation agrees with the tree notation from \cite{GKO}. In addition, we color the trees by assigning a specific color, \textbf{black} or \textbf{red}, to each one of the nodes of such a tree. Let us describe the procedure: The first generation of \textbf{colored} trees, $C(1),$ consists of the following $5$ trees 
\begin{forest}
 [b[b][b][b]]
\end{forest} \quad 
\begin{forest}
[b[r][r][b]]
\end{forest} \quad
\begin{forest}
[b[r][b][r]]
\end{forest} \quad
\begin{forest}
[b[b][b][r]]
\end{forest} \quad
\begin{forest}
[b[b][r][b]]
\end{forest}\newline
These trees describe all possible "patterns" of the non-linearity $G(w,v)$, namely all combinations of 
$|v|^{2}v, |w|^{2}v, w^{2}\bar{v}, |v|^{2}w, v^{2}\bar{w}$ where 
$v$ is black and $w$ is red. There is also the red tree, 
which is not considered to belong to any generation, that 
plays an important role in the construction of the next 
generations and is simply given by 

\begin{forest}
[r[r][r][r]]
\end{forest}\newline
Next we assume that the $J$th generation of colored trees, say 
$C(J),$ has been constructed, and we describe how the new generation 
$C(J+1)$ arises. Thus, let $T_{k}^{J}$ be one of the trees of the 
$C(J)$ family. We look at each of the $2J+1$ terminal nodes of 
$T_{k}^{J}$: 

\begin{itemize}
\item If one of these nodes is red then it gives rise to one new tree where this red node gave birth to three new red nodes. In other words, if a terminal node is red then attach the red tree to the tree $T_{k}^{J}$ at the red node. 
 \item If one of these nodes is black then it gives rise to five new trees where each one of them is born by attaching one of the trees of the first generation to the tree $T_{k}^{J}$ at the black node. 
\end{itemize}
We will denote by 
\begin{equation}
\label{howmany}
N(J)\coloneqq\mbox{card}(C(J)).
\end{equation}
Moreover, for a tree $T=T_{k}^{J}\in C(J)$ let
\begin{equation}
\label{blackandred}
b_{k}^{J}= \mbox{number of black terminal nodes of}\ T_{k}^{J},\ r_{k}^{J}= \mbox{number of red terminal nodes of}\ T_{k}^{J}
\end{equation}
and denote by 
\begin{equation}
\label{bigBR}
B_{k}^{J}=\{a\in T^{\infty}: a\ \mbox{is black}\}, R_{k}^{J}=\{a\in T^{\infty}:a\ \mbox{is red}\}. 
\end{equation}
Obviously we have the relations $B_{k}^{J}\cup R_{k}^{J}=T^{\infty}, B_{k}^{J}\cap R_{k}^{J}=\emptyset, \mbox{card}(B_{k}^{J})=b_{k}^{J}, \mbox{card}(R_{k}^{J})=r_{k}^{J}$ and 
\begin{equation}
\label{properttt}
b_{k}^{J}+r_{k}^{J}=2J+1,\ \max_{1\leq k\leq N(J)} b_{k}^{J}=2J+1,\ \mbox{and}\ \max_{1\leq k\leq N(J)} r_{k}^{J}=2J.
\end{equation}
The last two are true because there is at least one tree $T_{1}^{J}$ that consists of only black nodes. Therefore, for such tree we have $b_{1}^{J}=2J+1, r_{1}^{J}=0$, and there is also at least one tree $T_{2}^{J}$ with only one black terminal node, which implies $b_{2}^{J}=1, r_{2}^{J}=2J$. Also observe that by our construction there is no tree with only red terminal nodes. 

We also define the quantities 
\begin{equation}
\label{allblackred}
b_{J}=\sum_{k=1}^{N(J)}b_{k}^{J},\ r_{J}=\sum_{k=1}^{N(J)}r_{k}^{J},
\end{equation}
which respectively give the total number of black and red terminal nodes of the colored family $C(J)$. Notice that the number of colored trees of the next generation $C(J+1)$ is given by the formula
\begin{equation}
\label{counttrees}
N(J+1)=5b_{J}+r_{J}\, .
\end{equation}
This is because each one of the black nodes gives rise to $5$ new trees and each one of the red nodes gives rise to just $1$ new tree. 

Knowing the numbers $b_{k}^{J}, r_{k}^{J}$ for each tree $T_{k}^{J}\in C(J), 1\leq k\leq N(J)$ allows us to calculate the precise numbers $b_{J+1}$ and $r_{J+1}$ of the next generation by using the formulas
\begin{equation}
\label{blacknext}
b_{J+1}=\sum_{k=1}^{N(J)}\Big((5b_{k}^{J}+4)b_{k}^{J}+r_{k}^{J}b_{k}^{J}\Big)
\end{equation}
\begin{equation}
\label{rednext}
r_{J+1}=6b_{J}+2r_{J}+\sum_{k=1}^{N(J)}\Big(5b_{k}^{J}r_{k}^{J}+(r_{k}^{J})^{2}\Big).
\end{equation}
Indeed, each $b_{k}^{J}$ gives rise to $9+5(b_{k}^{J}-1)$ new black nodes and each red node $r_{k}^{J}$ leaves the number of black nodes the same as before. Also, each black node $b_{k}^{J}$ gives rise to $6+5r_{k}^{J}$ new red nodes and each red node $r_{k}^{J}$ gives rise to $3+r_{k}^{J}-1$ new red nodes. 

For our calculations it is important to know how fast the number $N(J)$ grows as $J$ approaches infinity. Since we have to count trees, 
 one expects a factorial growth and coloring the trees does not change this significantly: 
\begin{lemma}
\label{hopcor}
For every $J\in\mathbb N$ 
$$N(J)\leq\frac{10^{J}\ \Gamma(J+\frac12)}{\sqrt{\pi}},$$
where $\Gamma$ is the Gamma function. 
\end{lemma}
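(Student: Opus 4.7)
The plan is to prove this by induction on $J$, using the crude bound $N(J+1) \le (10J+5)\,N(J)$ which captures exactly the growth rate of the Gamma function in the stated inequality.

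First I would verify the base case $J=1$. The first generation $C(1)$ consists of the five trees displayed earlier in the paper, so $N(1)=5$. On the other hand $\Gamma(3/2)=\frac{\sqrt{\pi}}{2}$, so the right-hand side evaluates to $\frac{10\cdot \Gamma(3/2)}{\sqrt{\pi}}=5$, and in fact equality holds.

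For the inductive step, I would first establish the recursion inequality. From \eqref{counttrees} we have $N(J+1)=5b_{J}+r_{J}$, and since $b_{J}+r_{J}$ is the total number of terminal nodes across all trees in $C(J)$, the identity $b_{k}^{J}+r_{k}^{J}=2J+1$ from \eqref{properttt} gives
\begin{equation*}
b_{J}+r_{J}=\sum_{k=1}^{N(J)}(b_{k}^{J}+r_{k}^{J})=(2J+1)\,N(J).
\end{equation*}
Since each red terminal node contributes only $1$ new tree while each black terminal node contributes $5$, we discard the saving and bound
\begin{equation*}
N(J+1)=5b_{J}+r_{J}\le 5(b_{J}+r_{J})=5(2J+1)\,N(J)=10\!\left(J+\tfrac{1}{2}\right)N(J).
\end{equation*}

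Assuming the induction hypothesis $N(J)\le \frac{10^{J}\,\Gamma(J+\frac{1}{2})}{\sqrt{\pi}}$ and combining with the recursion inequality, I obtain
\begin{equation*}
N(J+1)\le 10\!\left(J+\tfrac{1}{2}\right)\cdot \frac{10^{J}\,\Gamma(J+\frac{1}{2})}{\sqrt{\pi}}
=\frac{10^{J+1}\,(J+\frac{1}{2})\,\Gamma(J+\frac{1}{2})}{\sqrt{\pi}}
=\frac{10^{J+1}\,\Gamma(J+\frac{3}{2})}{\sqrt{\pi}},
\end{equation*}
where the last equality uses the functional equation $\Gamma(x+1)=x\,\Gamma(x)$ with $x=J+\frac{1}{2}$. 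This closes the induction. There is no serious obstacle: the essential observation is that the five first-generation trees match the leading factor of $10$ exactly, and the $J+\frac{1}{2}$ factor in $\Gamma$ is tailored to the $2J+1$ terminal nodes per tree, so the estimate is tight up to the loss of the distinction between red and black terminal nodes.
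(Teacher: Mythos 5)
Your proof is correct and follows essentially the same route as the paper: both derive the recursion bound $N(J+1)\le 5(2J+1)N(J)$ from \eqref{counttrees} and \eqref{properttt}, and then solve the resulting recurrence via $\Gamma(x+1)=x\Gamma(x)$. Your algebra for the recursion step ($5b_J+r_J\le 5(b_J+r_J)$) is marginally more direct than the paper's (which writes $N(J+1)=4b_J+(2J+1)N(J)$ and separately bounds $b_J$), but it is the same estimate.
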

\begin{proof}
By \eqref{counttrees} and \eqref{properttt} we obtain 
$$N(J+1)=4b_{J}+N(J)(2J+1)\leq4(2J+1)N(J)+N(J)(2J+1)=5(2J+1)N(J).$$
Let us define a sequence $A(J)$ by the  recurrence relation $A(J+1)=5(2J+1)A(J), A(1)=5$. This can be solved explicitly in terms of the Gamma function using the equality $\Gamma(x+1)=x\Gamma(x), x>0$ and gives the result
$$A(J)=\frac{10^{J}\ \Gamma(J+\frac12)}{\sqrt{\pi}}.$$
An easy induction argument shows that for all $J\in\mathbb N$ we have $N(J)\leq A(J)$ which finishes the proof.
\end{proof}
Using the equality
\begin{equation}
\label{gammaa}
\Gamma(J+\frac12)=\frac{(2J-1)!!}{2^{J}}\ \sqrt{\pi},
\end{equation}
$J\in\mathbb N,$ where the double factorial $(2J-1)!!=1\cdot3\cdot5\cdot\ldots\cdot(2J-1),$ we obtain the bound
\begin{equation}
\label{growthoftrees}
N(J)\leq 5^{J}(2J-1)!!,
\end{equation} 
for all $J\in\mathbb N$.

Given a colored tree $T=T_{k}^{J}$ of the $C(J)$ family we define an index function $n:T_{k}^{J}\to\Z$ such that
\begin{itemize}
\item If $a$ is a black node in $T^{0}$ then $n_{a}\approx n_{a_{1}}-n_{a_{2}}+n_{a_{3}}$ (see \eqref{approxim}) where $a_{1}, a_{2}, a_{3}$ are the children of $a$.
\item If $a$ is a red node in $T^{0}$ then $n_{a}=n_{a_{1}}-n_{a_{2}}+n_{a_{3}},$ where $a_{1}, a_{2}, a_{3}$ are the children of $a$,
\item $n_{a}\not\approx n_{a_{1}}$ and $n_{a}\not\approx n_{a_{3}}$ for all black nodes $a\in T^{0}$ and $n_{a}\neq n_{a_{1}}$ and $n_{a}\neq n_{a_{3}}$ for all red nodes $a\in T^{0}$.
\item $|\mu_{1}|\coloneqq 2|n_{r}-n_{r_{1}}||n_{r}-n_{r_{3}}|>N$, where $r$ is the root of $T_{k}^{J}$.
\end{itemize}
We denote the collection of all such index functions by $\mathcal R(T_{k}^{J})$.

Similar to what was done in \cite{GKO}, given a colored tree $T$ 
in $C(J)$ and an index function $n\in\mathcal R(T),$ we need to 
keep track of the generations of frequencies. Consider the very 
first tree $T_{1},$ that is, the root $r$ and its children 
$r_{1}, r_{2}, r_{3}$. We define the first generation of 
frequencies by 
$$(n^{(1)},n_{1}^{(1)},n_{2}^{(1)},n_{3}^{(1)})\coloneqq (n_{r},n_{r_{1}},n_{r_{2}},n_{r_{3}}).$$
From the definition of the index function we have
$$n^{(1)}\approx n_{1}^{(1)}-n_{2}^{(1)}+n_{3}^{(1)},\ n_{1}^{(1)}\not\approx n^{(1)}\not\approx n_{3}^{(1)},$$
since the root node is colored black. The tree $T_{2}$ of the second generation is obtained from $T_{1}$ by changing one of its terminal nodes $a=r_{k}\in T_{1}^{\infty}$ for some $k=1,2,3$ into a nonterminal node. Then, the second generation of frequencies is defined by
$$(n^{(2)},n_{1}^{(2)},n_{2}^{(2)},n_{3}^{(2)})\coloneqq(n_{a},n_{a_{1}},n_{a_{2}},n_{a_{3}}).$$
Thus we have $n^{(2)}=n_{k}^{(1)}$ for some $k=1,2,3$ and from the definition of the index function we get
$$n^{(2)}\approx n_{1}^{(2)}-n_{2}^{(2)}+n_{3}^{(2)},\ n_{1}^{(2)}\not\approx n^{(2)}\not\approx n_{3}^{(2)}$$
if $n_{k}^{(1)}$ is black or
$$n^{(2)}= n_{1}^{(2)}-n_{2}^{(2)}+n_{3}^{(2)},\ n_{1}^{(2)}\neq n^{(2)}\neq n_{3}^{(2)}$$
if $n_{k}^{(1)}$ is red. After $j-1$ steps, the tree $T_{j}$ of the $j$th generation is obtained from $T_{j-1}$ by changing one of its terminal nodes $a\in T_{j-1}^{\infty}$ into a nonterminal node. Then, the $j$th generation frequencies are defined as 
$$(n^{(j)},n_{1}^{(j)},n_{2}^{(j)},n_{3}^{(j)})\coloneqq(n_{a},n_{a_{1}},n_{a_{2}},n_{a_{3}})$$
and we have $n^{(j)}=n_{k}^{(m)}(=n_{a})$ for some $m=1,2,\ldots,j-1$ and $k=1,2,3,$ since this corresponds to the frequency of some terminal node in $T_{j-1}$. In addition, from the definition of the index function we have
$$n^{(j)}\approx n_{1}^{(j)}-n_{2}^{(j)}+n_{3}^{(j)},\ n_{1}^{(j)}\not\approx n^{(j)}\not\approx n_{3}^{(j)}$$
if $n_{k}^{(m)}$ is black or
$$n^{(j)}= n_{1}^{(j)}-n_{2}^{(j)}+n_{3}^{(j)},\ n_{1}^{(j)}\neq n^{(j)}\neq n_{3}^{(j)}$$
if $n_{k}^{(m)}$ is red. 

We use $\mu_{j}$ to denote the corresponding phase factor introduced at the $j$th generation. That is,
\begin{equation}
\label{muuu}
\mu_{j}=2(n^{(j)}-n_{1}^{(j)})(n^{(j)}-n_{3}^{(j)}),
\end{equation}
and we also introduce the quantities
\begin{equation}
\label{qqq}
\tilde\mu_{J}=\sum_{j=1}^{J}\mu_{j},\ \hat{\mu}_{J}=\prod_{j=1}^{J}\tilde\mu_{j}.
\end{equation}
We should keep in mind that every time we apply differentiation 
by parts and split the operators, we need to control the new 
frequencies that arise from this procedure. For this reason, 
we need to define the sets
\begin{equation}
\label{sesee}
C_{J}\coloneqq\{|\tilde\mu_{J+1}|\leq(2J+3)^{3}|\tilde\mu_{J}|^{1-\frac1{100}}\}\cup\{|\tilde\mu_{J+1}|\leq(2J+3)^{3}|\mu_{1}|^{1-\frac1{100}}\}.
\end{equation}

Let us denote by $T_{\alpha}$ all the nodes of the tree $T$ that are descendants of the node $\alpha\in T^{0}$, i.e. $T_{\alpha}=\{\beta\in T:\beta\leq\alpha,\ \beta\neq\alpha\}$. 

We also need to define the \textbf{principal and final "signs" of a node} $a\in T$ which are functions from the tree $T$ into the set $\{\pm1\}$:
\begin{equation}
\label{signsign}
\mbox{psgn}(a)=\begin{cases}
+1,\ a\ \mbox{is not the middle child of his parent}\\
+1,\ a=r,\ \mbox{the root node}\\
-1,\ a\ \mbox{is the middle child of his parent}
\end{cases}
\end{equation}
\begin{equation}
\label{signsignsign}
\mbox{fsgn}(a)=\begin{cases}
+1,\ \mbox{psgn}(a)=+1\ \mbox{and}\ a\ \mbox{has an even number of middle predecessors}\\
-1,\ \mbox{psgn}(a)=+1\ \mbox{and}\ a\ \mbox{has an odd number of middle predecessors}\\
-1,\ \mbox{psgn}(a)=-1\ \mbox{and}\ a\ \mbox{has an even number of middle predecessors}\\
+1,\ \mbox{psgn}(a)=-1\ \mbox{and}\ a\ \mbox{has an odd number of middle predecessors},
\end{cases}
\end{equation}
where the root node $r\in T$ is not considered a middle parent. 

Next we define two "prototype" operators in the following way. Suppose that $T\in T(J)$ (see \eqref{setsetset2}) is a tree of only black nodes. Let $\tilde q^{J,t}_{T,\mathbf n}$ and $R^{J,t}_{T,\mathbf n}$ be related as
\begin{equation}
\label{oops}
\mathcal F(\tilde q^{J,t}_{T,\mathbf n}(\{v_{n_\beta}\}_{\beta\in T^{\infty}}))(\xi)=e^{-it\xi^{2}}\mathcal F(R^{J,t}_{T,\mathbf n}(\{e^{-it\partial_{x}^{2}}v_{n_\beta}\}_{\beta\in T^{\infty}}))(\xi),
\end{equation}
where the operator $R^{J,t}_{T,\mathbf n}$ acts on the functions $\{v_{n_\beta}\}_{\beta\in T^{\infty}}$ as
\begin{equation}
\label{oops1}
R^{J,t}_{T,\mathbf n}(\{v_{n_\beta}\}_{\beta\in T^{\infty}})(x)=\int_{\R^{2J+1}}K^{(J)}_{T}(x,\{x_{\beta}\}_{\beta\in T^{\infty}})\Big[\otimes_{\beta\in T^{\infty}}v_{n_\beta}(x_{\beta})\Big]\ \prod_{\beta\in T^{\infty}} dx_{\beta},
\end{equation}
and the Kernel $K^{(J)}_{T,\mathbf n}$ is defined as 
\begin{equation}
\label{oopss}
K^{(J)}_{T,\mathbf n}(x,\{x_{\beta}\}_{\beta\in T^{\infty}})=\mathcal F^{-1}(\rho^{(J)}_{T,\mathbf n})(\{x-x_{\beta}\}_{\beta\in T^{\infty}}),
\end{equation}
where the formula for the function $\rho^{(J)}_{T,\mathbf n}$ with ($|T^{\infty}|=2J+1$)-variables, $\xi_{\beta}$, $\beta\in T^{\infty}$ is
\begin{equation}
\label{jc}
\rho^{(J)}_{T,\mathbf n}(\{\xi_{\beta}\}_{\beta\in T^{\infty}})=\Big[\prod_{\alpha\in T^0}\sigma_{n_{\alpha}}\Big(\sum_{\beta\in T^{\infty}\cap T_{\alpha}}\mbox{fsgn}(\beta)\ \xi_{\beta}\Big)\Big]\frac1{\hat{\mu}_{T}}.
\end{equation}
We denote by 
\begin{equation}
\label{yeah}
\hat{\mu}_{T}=\prod_{\alpha\in T^0}\tilde\mu_{\alpha},\ \tilde\mu_{\alpha}=\sum_{\beta\in T^{0}\setminus T_{\alpha}}\mu_{\beta},
\end{equation}
and for $\beta\in T^{0}$ we have
\begin{equation}
\label{yyeah}
\mu_{\beta}=2(\xi_{\beta}-\xi_{\beta_{1}})(\xi_{\beta}-\xi_{\beta_{3}}),
\end{equation}
where we impose the relation $\xi_{\alpha}=\xi_{\alpha_{1}}-\xi_{\alpha_{2}}+\xi_{\alpha_{3}}$ for every $\alpha\in T^{0}$ that appears in the calculations, until we reach the terminal nodes of $T^{\infty}$. This is due to the fact that  in the definition of the function $ \rho^{J,t}_{T}$ we need the variables "$\xi$" to be assigned only at the terminal nodes of the tree $T$. 
We use the notation $\mu_{\beta}$ similarly to $\mu_{j}$ of equation \eqref{muuu}, because this is the ``continuous" version of the discrete case. In addition, the variables 
$\xi_{\alpha_{1}}, \xi_{\alpha_{2}}, \xi_{\alpha_{3}}$ that appear in expression \eqref{jc} are supported in such a way that 
$\xi_{\alpha_{1}}\approx n_{\alpha_{1}}, \xi_{\alpha_{2}}\approx n_{\alpha_{2}}, \xi_{\alpha_{3}}\approx n_{\alpha_{3}}$,  
due to the support properties of the cut--off functions 
$\sigma_{n_{\alpha}}$. 
Therefore, $|\hat{\mu}_{T}|\sim|\hat{\mu}_{J}|$.

Notice that if $\{\beta_{1},\ldots,\beta_{2J+1}\}=T^{\infty}$, 
then we may rewrite \eqref{oops1} as 
\begin{equation}
\label{newoops1}
R^{J,t}_{T,\mathbf n}(v_{n_{\beta_{1}}},\ldots,v_{n_{\beta_{2J+1}}})(x)=
\end{equation}
$$\int_{\R}e^{ix\xi}\Big(\int_{\R^{2J}}\rho^{(J)}_{T,\mathbf n}(\xi_{\beta_{1}},\ldots,\xi_{\beta_{2J}},\xi-\sum_{k=1}^{2J}\xi_{\beta_{k}})\prod_{k=1}^{2J}\hat{v}_{n_{\beta_{k}}}(\xi_{\beta_{k}})\ \hat{v}_{n_{\beta_{2J+1}}}(\xi-\sum_{k=1}^{2J}\xi_{\beta_{k}})\prod_{k=1}^{2J}d\xi_{\beta_{k}}\Big)d\xi$$
which implies 
\begin{equation}
\label{fourieroops1}
\mathcal F(R^{J,t}_{T,\mathbf n}(v_{n_{\beta_{1}}},\ldots,v_{n_{\beta_{2J+1}}}))(\xi)=
\end{equation}
$$\int_{\R^{2J}}\rho^{(J)}_{T,\mathbf n}(\xi_{\beta_{1}},\ldots,\xi_{\beta_{2J}},\xi-\sum_{k=1}^{2J}\xi_{\beta_{k}})\prod_{k=1}^{2J}\hat{v}_{n_{\beta_{k}}}(\xi_{\beta_{k}})\ \hat{v}_{n_{\beta_{2J+1}}}(\xi-\sum_{k=1}^{2J}\xi_{\beta_{k}})\prod_{k=1}^{2J}d\xi_{\beta_{k}}.$$
Such an operator was studied in \cite{CHKP} Lemma $21$ and in \cite{NP} Lemma $21$. 

Our goal is to define the operators $\tilde q^{J,t}_{T,\mathbf n}$ and $R^{J,t}_{T,\mathbf n}$ for any colored tree $T_{k}^{J}$ of the $C(J)$ family. From \eqref{bigBR} we know that $B_{k}^{J}\cup R_{k}^{J}=(T_{k}^{J})^{\infty}$. If $R_{k}^{J}=\emptyset$ then the tree is black and the operators have already been defined by \eqref{fourieroops1}. Thus, assume $R_{k}^{J}=\{r_{m}\}_{m=1}^{r_{k}^{J}}\neq\emptyset, B_{k}^{J}=\{b_{m}\}_{m=1}^{b_{k}^{J}}$ and consider functions $\{v_{n_{b_{m}}}\}_{m=1}^{b_{k}^{J}}$ and Fourier coefficients $\{w_{n_{r_{\tilde{m}}}}\}_{\tilde{m}=1}^{r_{k}^{J}}$. Let $\{v_{n_{r_{\tilde{m}}}}\}$ be defined by 
\begin{equation}
\label{functionsalle}
\hat{v}_{n_{r_{\tilde{m}}}}(\xi)=w_{n_{r_{\tilde{m}}}}\delta_{n_{r_{\tilde{m}}}}(\xi),
\end{equation}
for all $\tilde{m}\in\{1,\ldots, r_{k}^{J}\}$. Then the operator $R^{J,t}_{T_{k}^{J}, \mathbf n}$ is defined as 
\begin{equation}
\label{coloredoperators1}
\mathcal F(R^{J,t}_{T_{k}^{J}, \mathbf n}(\{v_{n_{b_{m}}}\}_{m=1}^{b_{k}^{J}},\{w_{n_{r_{\tilde{m}}}}\}_{\tilde{m}=1}^{r_{k}^{J}}))(\xi)=
\end{equation}
$$\mathcal F(R^{J,t}_{T, \mathbf n}(\{v_{n_{b_{m}}}\}_{m=1}^{b_{k}^{J}},\{v_{n_{r_{\tilde{m}}}}\}_{\tilde{m}=1}^{r_{k}^{J}}))(\xi)$$
and 
\begin{equation}
\label{coloredoperators2}
\mathcal F(\tilde{q}^{J,t}_{T_{k}^{J}, \mathbf n}(\{v_{n_{b_{m}}}\}_{m=1}^{b_{k}^{J}},\{w_{n_{r_{\tilde{m}}}}\}_{\tilde{m}=1}^{r_{k}^{J}}))(\xi)=
\end{equation}
$$e^{-it\xi^{2}}\mathcal F(R^{J,t}_{T_{k}^{J},\mathbf n}(\{e^{-it\partial_{x}^{2}}v_{n_{b_{m}}}\}_{m=1}^{b_{k}^{J}},\{e^{itn^{2}_{r_{\tilde{m}}}}w_{n_{r_{\tilde{m}}}}\}_{\tilde{m}=1}^{r_{k}^{J}}))(\xi).$$
For these operators the following holds.

\begin{lemma}
\label{indu}
$$\|R^{J,t}_{T_{k}^{J}, \mathbf n}(\{v_{n_{b_{m}}}\}_{m=1}^{b_{k}^{J}},\{w_{n_{r_{\tilde{m}}}}\}_{\tilde{m}=1}^{r_{k}^{J}})\|_{L^{2}(\R)}\lesssim\frac{\prod_{m=1}^{b_{k}^{J}}\|v_{n_{b_{m}}}\|_{2}\ \prod_{\tilde{m}=1}^{r_{k}^{J}}|w_{n_{r_{\tilde{m}}}}|}{|\hat{\mu}_{T_{k}^{J}}|}$$
\end{lemma}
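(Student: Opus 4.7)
The goal is to reduce the colored-tree bound to the all-black case, which has already been handled in \cite[Lemma 21]{CHKP} and \cite[Lemma 21]{NP}. The key observation, built into the very definition \eqref{coloredoperators1} and hinted at in Remark \ref{asbefore}, is that $R^{J,t}_{T_k^J,\mathbf{n}}$ is obtained from the all-black operator $R^{J,t}_{T,\mathbf{n}}$ by feeding the Dirac masses $\hat{v}_{n_{r_{\tilde m}}} = w_{n_{r_{\tilde m}}}\delta_{n_{r_{\tilde m}}}$ at the red terminal nodes. The plan therefore is to exploit this structural identity rather than to redo the combinatorics from scratch.

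First, I would substitute these Dirac masses into the explicit Fourier representation \eqref{fourieroops1}. Each integral attached to a red terminal node collapses to a point evaluation: the variable $\xi_{r_{\tilde m}}$ is pinned to $n_{r_{\tilde m}}$ and a factor $w_{n_{r_{\tilde m}}}$ comes out of the integral. What remains is an integral only over the variables $\xi_{b_m}$ attached to the black terminal nodes, against the Fourier transforms $\hat{v}_{n_{b_m}}$, multiplied by the reduced multiplier $\rho^{(J)}_{T,\mathbf{n}}$ evaluated at the pinned red-node values.

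Next, I would pair against an arbitrary $g \in L^2(\R)$ via Plancherel, as in the proof of Lemma \ref{fir}. The cut-offs $\sigma_{n_\alpha}$ in \eqref{jc} confine each remaining $\xi_{b_m}$ to an interval of length $O(1)$ around $n_{b_m}$ (and similarly $\xi_\alpha \approx n_\alpha$ at every internal node, because the relation $\xi_\alpha = \xi_{\alpha_1}-\xi_{\alpha_2}+\xi_{\alpha_3}$ propagates the support information up the tree). On this support one has $|\hat{\mu}_T| \sim |\hat{\mu}_{T_k^J}|$ uniformly, so the factor $1/|\hat{\mu}_{T_k^J}|$ and the constants $|w_{n_{r_{\tilde m}}}|$ can be pulled out. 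What is left is an expression of exactly the same shape as in the all-black case, to which the estimate of \cite[Lemma 21]{CHKP} and \cite[Lemma 21]{NP} applies and yields the product $\prod_m \|v_{n_{b_m}}\|_2 \cdot \|g\|_2$. Taking the supremum over $\|g\|_2 = 1$ gives the claimed bound.

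I expect the main technical obstacle to lie not in the red nodes — which, as explained above, only collapse some integrations to point evaluations and contribute clean multiplicative factors $|w_{n_{r_{\tilde m}}}|$ — but in the underlying all-black estimate, which is where the iterated $L^2$ bookkeeping over the $2J+1$ terminal variables and the $J$-fold $1/\tilde{\mu}_\alpha$ denominators really happens. Since that estimate is already in the literature, the present lemma becomes essentially a substitution argument plus the observation that the support of $\rho^{(J)}_{T,\mathbf{n}}$ allows the discrete and continuous versions of $\hat{\mu}$ to be identified up to constants. An alternative, if one prefers a self-contained route, is to proceed by induction on the number $r_k^J$ of red terminal nodes, the base case $r_k^J = 0$ being precisely the all-black bound and the inductive step consisting of peeling off one red node via its Dirac mass.
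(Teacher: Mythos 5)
Your proposal correctly reconstructs the paper's intended argument: the paper's own proof is a one-line pointer (``repeated use of duality and H\"older's inequality, similar to Lemma \ref{fir}''), and your plan --- substitute the Dirac masses of \eqref{functionsalle}/\eqref{coloredoperators1} at the red terminals, pair against $g\in L^2$, exploit the support of the cut-offs $\sigma_{n_\alpha}$ to replace $|\hat{\mu}_T|$ by $|\hat{\mu}_{T_k^J}|$, and close with H\"older on the $O(1)$-length frequency intervals --- is precisely that argument, spelled out. The only wording to tighten is ``of exactly the same shape as the all-black case'': after pinning the red frequencies the kernel has $b_k^J$ rather than $2J+1$ free variables, so one cannot literally invoke the statement of \cite[Lemma 21]{CHKP}/\cite[Lemma 21]{NP} (whose hypotheses are $L^2$ inputs at every terminal) but must rerun its duality/H\"older mechanism on the reduced kernel, which is exactly what your concluding ``substitution argument'' remark and the proposed induction on $r_k^J$ amount to.
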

\begin{proof}
  The proof of the above bound is similar to 
  the strategy of the proof of Lemma \ref{fir}: a repeated use of duality and H\"older's inequality.  We leave the details to the reader. 	
\end{proof}

Next, given a colored tree $T=T_{k}^{J}$ of the $C(J)$ family and $\alpha\in T^{\infty}$ we define the operators $\mathbf R_{2}^{t,\alpha}-\mathbf R_{1}^{t\alpha}, \mathbf N_{1}^{t,\alpha}$ by
\begin{equation}
\label{newdef33}
\mathbf R_{2}^{t,\alpha}-\mathbf R_{1}^{t,\alpha}=\begin{cases} R_{2}^{t}-R_{1}^{t} &,\ \alpha\in B_{k}^{J}\\ \mathscr R_{2}^{t}-\mathscr R_{1}^{t} &,\ \alpha\in R_{k}^{J}\\ \end{cases}, \quad\mathbf N_{1}^{t,\alpha}\coloneqq\begin{cases} N_{1}^{t}&,\ \alpha\in B_{k}^{J}\\ \mathscr N_{1}^{t} &,\ \alpha\in R_{k}^{J}\\ \end{cases}.
\end{equation}
Next, for such a tree $T=T_{k}^{J},$ index function 
$\mathbf n\in\mathcal R(T),$ $\alpha\in T^{\infty}$ and set 
of functions 
$\{v_{n_{b_{m}}}\}_{m=1}^{b_{k}^{J}},\{w_{n_{r_{\tilde{m}}}}\}_{\tilde{m}=1}^{r_{k}^{J}}$ 
we define the action of the operator $\mathbf N_{1}^{t,\alpha}$ 
onto the set of functions to be the same set as before but with the difference that we have substituted the function 
$f_{n_{\alpha}}\coloneqq v_{n_{\alpha}}\chi_{B_{k}^{J}}(\alpha)+w_{n_{\alpha}}\chi_{R_{k}^{J}}(\alpha)$ with 
$\mathbf N_{1}^{t,\alpha}(f_{n_{\alpha}})$. 
Similarly, we define the action of the operator 
$\mathbf R_{2}^{t,\alpha}-\mathbf R_{1}^{t,\alpha}$ onto 
the set of functions 
$\{v_{n_{b_{m}}}\}_{m=1}^{b_{k}^{J}},\{w_{n_{r_{\tilde{m}}}}\}_{\tilde{m}=1}^{r_{k}^{J}}$.

The operator of the $J$th step, $J\geq 2$, that we want to estimate, 
is given by the formula
\begin{equation}
\label{fina}
N_{2}^{(J)}(v)(n)\coloneqq\sum_{T\in C(J-1)}\sum_{\alpha\in T^{\infty}}\sum_{\substack{\mathbf n\in\mathcal R(T)\\ \mathbf n_{r}=n}}\tilde q^{J-1,t}_{T,\mathbf n}(\mathbf N_{1}^{t,\alpha}(\{v_{n_{b_{m}}}\}_{m=1}^{b_{k}^{J}},\{w_{n_{r_{\tilde{m}}}}\}_{\tilde{m}=1}^{r_{k}^{J}})).
\end{equation}
Applying differentiation by parts on the Fourier side, keeping in mind that from the splitting procedure we are on the sets 
$A_{N}(n)^{c},C_{1}^{c},\ldots,C_{J-1}^{c}$, 
we obtain the expression
\begin{equation}
\label{fina1}
N_{2}^{(J)}(v)(n)=\partial_{t}(N_{0}^{(J+1)}(v)(n))+N_{r}^{(J+1)}(v)(n)+N^{(J+1)}(v)(n), 
\end{equation}
where
\begin{equation}
\label{fina2}
N_{0}^{(J+1)}(v)(n)\coloneqq\sum_{T\in C(J)}\sum_{\substack{\mathbf n\in\mathcal R(T)\\ \mathbf n_{r}=n}}\tilde q^{J,t}_{T,\mathbf n}(\{v_{n_{b_{m}}}\}_{m=1}^{b_{k}^{J}},\{w_{n_{r_{\tilde{m}}}}\}_{\tilde{m}=1}^{r_{k}^{J}}),
\end{equation}
and
\begin{equation}
\label{fina3}
N_{r}^{(J+1)}(v)(n)\coloneqq\sum_{T\in C(J)}\sum_{\alpha\in T^{\infty}}\sum_{\substack{\mathbf n\in\mathcal R(T)\\ \mathbf n_{r}=n}}\tilde q^{J,t}_{T,\mathbf n}((\mathbf R^{t,\alpha}_{2}-\mathbf R^{t,\alpha}_{1})(\{v_{n_{b_{m}}}\}_{m=1}^{b_{k}^{J}},\{w_{n_{r_{\tilde{m}}}}\}_{\tilde{m}=1}^{r_{k}^{J}})),
\end{equation}
and
\begin{equation}
\label{fina4}
N^{(J+1)}(v)(n)\coloneqq\sum_{T\in C(J)}\sum_{\alpha\in T^{\infty}}\sum_{\substack{\mathbf n\in\mathcal R(T)\\ \mathbf n_{r}=n}}\tilde q^{J,t}_{T,\mathbf n}(\mathbf N_{1}^{t,\alpha}(\{v_{n_{b_{m}}}\}_{m=1}^{b_{k}^{J}},\{w_{n_{r_{\tilde{m}}}}\}_{\tilde{m}=1}^{r_{k}^{J}})).
\end{equation}
We also split the operator $N^{(J+1)}$ as the sum
\begin{equation}
\label{fina5}
N^{(J+1)}=N_{1}^{(J+1)}+N_{2}^{(J+1)},
\end{equation}
where $N_{1}^{(J+1)}$ is the restriction of $N^{(J+1)}$ onto $C_{J}$ and $N_{2}^{(J+1)}$ onto $C_{J}^{c}$. 

First we estimate the operators $N_{0}^{(J+1)}$ and $N_{r}^{(J+1)}$.

\begin{lemma}
\label{finaal}
$$\|N_{0}^{(J+1)}(v)\|_{l^{2}(\Z)L^{2}(\R)}\lesssim N^{-\frac{J}{2}+\frac{(J-1)}{200}+}(\|v\|_{L^{2}(\R)}+\|w\|_{L^{2}(\T)})^{2J+1}$$
and 
$$\|N_{0}^{(J+1)}(v)-N_{0}^{(J+1)}(u)\|_{l^{2}(\Z)L^{2}(\R)}\lesssim N^{-\frac{J}{2}+\frac{(J-1)}{200}+}(\|v\|_{L^{2}(\R)}+\|u\|_{L^{2}(\R)}+\|w\|_{L^{2}(\T)})^{2J}\|v-u\|_{L^{2}(\R)}.$$

$$\|N_{r}^{(J+1)}(v)\|_{l^{2}(\Z)L^{2}(\R)}\lesssim N^{-\frac{J}{2}+\frac{(J-1)}{200}+}(\|v\|_{L^{2}(\R)}+\|w\|_{L^{2}(\T)})^{2J+3}$$
and
$$\|N_{r}^{(J+1)}(v)-N_{r}^{(J+1)}(u)\|_{l^{2}(\Z)L^{2}(\R)}\lesssim N^{-\frac{J}{2}+\frac{(J-1)}{200}+}(\|v\|_{L^{2}(\R)}+\|u\|_{L^{2}(\R)}+\|w\|_{L^{2}(\T)})^{2J+2}\|v-u\|_{L^{2}(\R)}.$$
\end{lemma}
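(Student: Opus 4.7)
The plan is to reduce the bound to the pointwise estimate of Lemma~\ref{indu} and then sum over admissible index functions and colored trees by combining the constraints inherited from restricting to $A_N(n)^c \cap C_1^c \cap \cdots \cap C_{J-1}^c$ with Cauchy--Schwarz, the divisor bound \eqref{num}, and the tree count of Lemma~\ref{hopcor}. By \eqref{coloredoperators2}, \eqref{Sch} and Lemma~\ref{indu}, for each fixed $T=T_k^J \in C(J)$ and each $\mathbf n \in \mathcal R(T)$ with $n_r = n$ one has
\begin{equation*}
\|\tilde q^{J,t}_{T,\mathbf n}\|_{L^{2}(\R)} \lesssim \frac{\prod_{m=1}^{b_k^J}\|v_{n_{b_m}}\|_2\,\prod_{\tilde m=1}^{r_k^J}|w_{n_{r_{\tilde m}}}|}{|\hat\mu_T|}.
\end{equation*}

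To extract the power of $N$, I would use that on $A_N(n)^c$ we have $|\tilde\mu_1|=|\mu_1|>N$, while iterating the condition defining $C_j^c$ for $j=1,\dots,J-1$ forces $|\tilde\mu_{j+1}|>(2j+3)^3|\mu_1|^{1-1/100}$; multiplying these gives $|\hat\mu_T|=\prod_{j=1}^J |\tilde\mu_j|\gtrsim N^{J-(J-1)/100}$. Splitting $|\hat\mu_T|^{-1}=|\hat\mu_T|^{-1/2}|\hat\mu_T|^{-1/2}$, one half supplies the claimed factor $N^{-J/2+(J-1)/200+}$; the other half is spent on summing over $\mathbf n$. Precisely, at each nonterminal $\alpha$ the identity $\mu_\alpha=2(n_\alpha-n_{\alpha_1})(n_\alpha-n_{\alpha_3})$ combined with \eqref{num} shows that the number of admissible tuples with fixed $|\mu_\alpha|$ is $O(|\mu_\alpha|^{0+})$, producing a harmless $N^{0+}$ per node. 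After taking the $\ell^2_n$ norm and applying Cauchy--Schwarz and Young's inequality in $\ell^1$ (exactly as in the argument leading to \eqref{ghg} in Lemma~\ref{lemle}), the remaining function factors collapse into $\|v\|_{L^2(\R)}^{b_k^J}\|w\|_{L^2(\T)}^{r_k^J} \le (\|v\|_{L^2(\R)}+\|w\|_{L^2(\T)})^{2J+1}$. Summing over $T \in C(J)$ costs at most $5^J(2J-1)!!$ by Lemma~\ref{hopcor}, which for the present estimate is absorbed into the $N^{0+}$.

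For $N_r^{(J+1)}$ the operator $\mathbf R_2^{t,\alpha}-\mathbf R_1^{t,\alpha}$ replaces the slot at $\alpha \in T^\infty$ by the resonant image of a cubic expression: Lemma~\ref{lem} (when $\alpha \in B_k^J$) and Remark~\ref{outoftheblue} (when $\alpha \in R_k^J$) both bound this in the appropriate norm by $(\|v\|_{L^2(\R)}+\|w\|_{L^2(\T)})^3$, accounting for the exponent $2J+3$; the sum over the $\le 2J+1$ choices of $\alpha$ is once more absorbed into the $N^{0+}$. The Lipschitz estimates follow from the multilinearity of $\tilde q^{J,t}_{T,\mathbf n}$ in the terminal-node factors, using the standard telescoping decomposition together with the Lipschitz bound in Lemma~\ref{lem} for the resonant case.

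The main obstacle will be the bookkeeping of the phase budget across $J$ iterations: one has to verify that the exponent $1-\tfrac{1}{100}$ in the definition \eqref{sesee} of $C_j$ is strong enough to keep each $|\tilde\mu_j|\gtrsim N^{1-o(1)}$ while the divisor-bound loss at each of the $J$ nonterminal nodes, combined with the Cauchy--Schwarz split of $|\hat\mu_T|^{-1}$, remains an overall $N^{0+}$, so that the exponent $-J/2+(J-1)/200+$ really does emerge and, crucially, is negative for all $J\ge 1$, which is what will permit the final summability in $J$ in the later sections.
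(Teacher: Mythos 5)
Your overall plan mirrors the paper's: apply Lemma \ref{indu}, use Cauchy--Schwarz to split the phase factor from the function norms, invoke the divisor bound \eqref{num} to control the multiplicity in the sum over $\mathbf n$, use Young's inequality in $\ell^1$ after taking the $\ell^2_n$ norm, and, for $N_r^{(J+1)}$, slot in Lemma \ref{lem} and Remark \ref{outoftheblue}. The derivation of $|\hat\mu_T|^{-1/2}\lesssim N^{-J/2+(J-1)/200}$ from the constraints inherited from $A_N(n)^c\cap C_1^c\cap\cdots\cap C_{J-1}^c$ is also essentially the paper's.

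There is, however, one genuine gap, and it concerns the sentence claiming that the tree count ``$\le 5^J(2J-1)!!$ \ldots is absorbed into the $N^{0+}$.'' That is not possible: $N$ is a fixed parameter (chosen at the end depending only on the size of the data), so $N^{0+}$ is a fixed constant and cannot absorb a quantity growing like $(2J-1)!!$. The same remark applies to the $O(J)$ factor from telescoping in the Lipschitz estimate, which you also leave unaddressed. The point is not that the lemma as stated is false for each fixed $J$ (it would still be true with a $J$-dependent implicit constant), but rather that the implicit constants must be summable in $J$ for the later absorption in \eqref{argg}; a proof that lets them grow factorially is not adequate for the way the lemma is used. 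The paper handles this precisely through the polynomial weights $(2j+1)^3$ built into the definition \eqref{sesee} of $C_j$: after Cauchy--Schwarz the constrained sum
$\bigl(\sum \prod_{k}|\tilde\mu_k|^{-2}|\mu_k|^{+}\bigr)^{1/2}$ produces, in addition to $N^{-J/2+(J-1)/200+}$, a factor $\sim\prod_{j=2}^{J}(2j+1)^{-3/2}\sim (2J-1)!!^{-3/2}$, which then dominates the tree count and the telescoping factor, leaving the summable net $5^J/(2J-1)!!^{1/2}$ recorded in \eqref{factori}. This factorial gain is the missing ingredient in your argument, and it is also what makes the ``phase budget'' issue you flag at the end a non-issue: the cubes in the $C_j$ thresholds are there precisely to buy the factorial decay.
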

\begin{proof}
By \eqref{num} for fixed $n^{(j)}$ and $\mu_{j}$ there are at most $o(|\mu_{j}|^{+})$ many choices for $n_{1}^{(j)},n_{2}^{(j)},n_{3}^{(j)}$. In addition, let us observe that $\mu_{j}$ is determined by $\tilde\mu_{1},\ldots,\tilde\mu_{j}$ and $|\mu_{j}|\lesssim\max(|\tilde\mu_{j-1}|,|\tilde\mu_{j}|)$, since $\mu_{j}=\tilde\mu_{j}-\tilde\mu_{j-1}$. Then, for a fixed tree $T=T_{k}^{J}\in C(J)$, by Lemma \ref{indu} the estimate for the operator $\tilde q^{J,t}_{T,\mathbf n}$ is as follows (remember that $|\hat{\mu}_{T}|\sim|\hat{\mu}_{J}|=\prod_{k=1}^{J}|\tilde\mu_{k}|$)
$$\sum_{\substack{\mathbf n\in\mathcal R(T)\\ \mathbf n_{r}=n}}\|\tilde q^{J,t}_{T,\mathbf n}(\{v_{n_{b_{m}}}\}_{m=1}^{b_{k}^{J}},\{w_{n_{r_{\tilde{m}}}}\}_{\tilde{m}=1}^{r_{k}^{J}})\|_{2}\lesssim$$
$$\sum_{\substack{\mathbf n\in\mathcal R(T)\\ \mathbf n_{r}=n}}\Big(\prod_{m=1}^{b_{k}^{J}}\|v_{n_{b_{m}}}\|_{2}\ \prod_{\tilde{m}=1}^{r_{k}^{J}}|w_{n_{r_{\tilde{m}}}}|\Big)\Big(\prod_{k=1}^{J}\frac1{|\tilde\mu_{k}|}\Big),$$
and, by H\"older's inequality, this is bounded from above by
\begin{equation}
\label{hahah}
\Big(\sum_{\substack{|\mu_{1}|>N\\ |\tilde\mu_{j}|>(2j+1)^{3}N^{1-\frac1{100}}\\ j=2,\ldots,J}}\prod_{k=1}^{J}\frac1{|\tilde\mu_{k}|^{2}}|\mu_{k}|^{+}\Big)^{\frac1{2}}\Big(\sum_{\substack{\mathbf n\in\mathcal R(T)\\ \mathbf n_{r}=n}}\prod_{m=1}^{b_{k}^{J}}\|v_{n_{b_{m}}}\|_{2}^{2}\ \prod_{\tilde{m}=1}^{r_{k}^{J}}|w_{n_{r_{\tilde{m}}}}|^{2}\Big)^{\frac1{q}}.
\end{equation}
The first sum behaves like $N^{-\frac{J}{2}+\frac{(J-1)}{200}+}$ and for the remaining part we take the $l^{2}(\Z)$ norm in $n$ and by the use of Young's inequality we obtain the upper bound of
$$N^{-\frac{J}{2}+\frac{(J-1)}{200}+}\|v\|_{L^{2}(\R)}^{b_{k}^{J}}\|w\|_{L^{2}(\T)}^{r_{k}^{J}}.$$
Collecting terms, one sees that this proves the bound for $\|N_{0}^{(J+1)}(v)\|_{l^{2}(\Z)L^{2}(\R)}$.

Note that there is an extra factor $\sim J$ when we estimate the differences $N_{0}^{(J+1)}(v)-N_{0}^{(J+1)}(w)$ since $|a^{2J+1}-b^{2J+1}|\lesssim(\sum_{j=1}^{2J+1}a^{2J+1-j}b^{j-1})|a-b|$ has $O(J)$ many terms. Also, we have $N(J)=\mbox{card}(C(J))$ many summands in the operator $N_{0}^{(J+1)}$ since there are $N(J)$ many trees of the $J$th generation. However, these observations do not cause any problem since the constant that we obtain from estimating the first sum of \eqref{hahah} decays like a fractional power of a double factorial in $J$, or to be more precise, with the use of \eqref{growthoftrees} we have the following behaviour in $J$
\begin{equation}t
\label{factori}
\frac{5^{J}\cdot (2J-1)!!}{(2J-1)!!^{\frac32}}=\frac{5^{J}}{(2J-1)!!^{\frac12}}.
\end{equation}
For the operator $N_{r}^{(J+1)}$ the proof is the same but in addition we use Lemma \ref{lem} and Remark \ref{outoftheblue} for the operator $\mathbf R_{2}^{t}-\mathbf R_{1}^{t}$.
\end{proof}
Then the estimate for the operator $N_{1}^{(J+1)}$ is the following.

\begin{lemma}
\label{finaal2}
$$\|N_{1}^{(J+1)}(v)\|_{l^{2}(\Z)L^{2}(\R)}\lesssim N^{-\frac{J-1}{2}+\frac{(J-2)}{200}+}(\|v\|_{L^{2}(\R)}+\|w\|_{L^{2}(\T)})^{2J+3}$$
and
$$\|N_{1}^{(J+1)}(v)-N_{1}^{(J+1)}(u)\|_{l^{2}(\Z)L^{2}(\R)}\lesssim N^{-\frac{J-1}{2}+\frac{(J-2)}{200}+}(\|v\|_{L^{2}(\R)}+\|u\|_{L^{2}(\R)}+\|w\|_{L^{2}(\T)})^{2J+2}\|v-u\|_{L^{2}(\R)}.$$
\end{lemma}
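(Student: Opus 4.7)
The plan is to mimic closely the proof of Lemma \ref{finaal} for $N_0^{(J+1)}$, but with the crucial difference that, since we are restricted to $C_J$, we no longer gain a full factor of $|\tilde{\mu}_{J+1}|^{-2}$ from an extra differentiation by parts at the $(J+1)$-st level; instead, we must use the $C_J$ constraint to directly bound the number of allowed values of $\tilde{\mu}_{J+1}$. First, I would insert the definition \eqref{fina4} restricted to $C_J$, apply Lemma \ref{indu} to the outer operator $\tilde{q}^{J,t}_{T,\mathbf{n}}$ to obtain the factor $1/|\hat{\mu}_T| \sim \prod_{k=1}^J |\tilde{\mu}_k|^{-1}$, and use Remark \ref{expl} applied to $\mathbf{N}_1^{t,\alpha}$ (which is $N_1^t$ or $\mathscr{N}_1^t$ depending on whether $\alpha$ is black or red) to bound the inner operator by a triple product of $L^2$ norms of the new terminal inputs at level $J+1$, with no denominator.

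Next, for fixed $T \in C(J)$ and $\alpha \in T^\infty$, I would estimate the sum over $\mathbf{n} \in \mathcal{R}(T)$ together with the new frequencies $n_1^{(J+1)}, n_2^{(J+1)}, n_3^{(J+1)}$ introduced at $\alpha$ by exactly the Cauchy--Schwarz trick used in \eqref{hahah}: writing
\eq{
\sum_{\text{frequencies}} \frac{\prod \|f_{n_\gamma}\|_2}{|\hat{\mu}_T|}
\leq \left(\sum \frac{|\mu_{J+1}|^{0+}\prod_{k=1}^{J+1}|\mu_k|^{0+}}{\prod_{k=1}^{J} |\tilde{\mu}_k|^{2}}\right)^{1/2}\left(\sum_{\mathbf{n}}\prod_\gamma \|f_{n_\gamma}\|_2^2\right)^{1/2},
}
where I use the divisor bound \eqref{num} to count the number of $(n_1^{(j)},n_2^{(j)},n_3^{(j)})$ for fixed $n^{(j)}$ and $\mu_j$. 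Young's inequality handles the second factor and yields $\|v\|_{L^2(\R)}^{b_k^J+a}\|w\|_{L^2(\T)}^{r_k^J+b}$ with $a+b=2$, giving an overall $(\|v\|_{L^2(\R)}+\|w\|_{L^2(\T)})^{2J+3}$ after summing over terminal $\alpha$.

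The key calculation is the first factor. For $j=1$ we have $|\mu_1|>N$, for $j=2,\dots,J$ we have $|\tilde{\mu}_j|>(2j+1)^3 N^{1-\frac{1}{100}}$ by the $C_{j-1}^c$ restrictions, and on $C_J$ the number of admissible $\tilde{\mu}_{J+1}$ is $\lesssim (2J+3)^3\max(|\tilde{\mu}_J|^{1-\frac{1}{100}}, |\mu_1|^{1-\frac{1}{100}})$. Splitting this maximum into the two summands, the dominant branch is the one giving $|\mu_1|^{1-\frac{1}{100}}$, and the resulting sum factorizes as
\eq{
\Bigl(\sum_{|\mu_1|>N} |\mu_1|^{-1-\frac{1}{100}+}\Bigr)\prod_{j=2}^{J}\Bigl(\sum_{|\tilde{\mu}_j|>(2j+1)^3 N^{1-\frac{1}{100}}}|\tilde{\mu}_j|^{-2+}\Bigr)\lesssim N^{-(J-1)+\frac{J-2}{100}+},
}
whose square root is exactly $N^{-\frac{J-1}{2}+\frac{J-2}{200}+}$. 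The other branch ($|\tilde{\mu}_J|^{1-\frac{1}{100}}$) is smaller and therefore absorbed.

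Finally, the sum over $T\in C(J)$ and $\alpha\in T^\infty$ introduces a combinatorial factor $\lesssim N(J)\cdot(2J+1)$, and by Lemma \ref{hopcor} and \eqref{growthoftrees} this is $\lesssim 5^J(2J+1)!!$; however the counting factor in the previous display also carries a polynomial weight $\prod_j (2j+1)^{3\cdot(-1+)}$ from the $C_{j-1}^c$ lower bounds, producing an inverse double--factorial type decay as in \eqref{factori} which beats $5^J(2J+1)!!$. The Lipschitz bound follows by the same scheme applied to the multilinear difference $\mathbf{N}_1^{t,\alpha}(v)-\mathbf{N}_1^{t,\alpha}(u)$ and the telescoping identity $\prod v_i - \prod u_i = \sum_k (\prod_{i<k} v_i)(v_k-u_k)(\prod_{i>k} u_i)$, which introduces only an additional polynomial factor in $J$, again absorbed. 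The main technical obstacle is the bookkeeping: ensuring that the combination of the $C_J$ counting bound with the divisor bounds still yields a \emph{convergent} geometric series in each $\tilde{\mu}_k$ (this requires the exponent $-1-\frac{1}{100}+$ on $|\mu_1|$ to be strictly less than $-1$, which is the whole point of having the small $\frac{1}{100}$ gain in the definition \eqref{sesee} of $C_J$), and that the factorial decay from the $\tilde{\mu}_j$ lower bounds is strong enough to beat the factorial growth of $N(J)$.
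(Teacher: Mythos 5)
Your proposal is correct and follows essentially the same strategy as the paper's proof: apply Lemma~\ref{indu} and Remark~\ref{expl} to peel off the $\prod_{k=1}^J|\tilde\mu_k|^{-1}$ weight and the triple of new $L^2$ norms at the $(J+1)$-st node, run Cauchy--Schwarz in the frequency sum, use the divisor bound \eqref{num} together with the $C_J$ cardinality constraint to estimate the weight factor, and observe that the polynomial weight $\prod_{j\ge2}(2j+1)^{-3/2+}$ coming from the $C_{j-1}^{c}$ lower bounds gives the inverse double-factorial decay that beats $N(J)\cdot(2J+1)$. Your exponent bookkeeping also matches: the branch $|\tilde\mu_{J+1}|\lesssim(2J+3)^3|\mu_1|^{1-\frac1{100}}$ dominates and yields $N^{-(J-1)+\frac{J-2}{100}+}$ before the square root, exactly as the lemma states.

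One minor stylistic point: your schematic Cauchy--Schwarz display places $|\mu_{J+1}|^{0+}$ somewhat ambiguously alongside $\prod_{k=1}^{J+1}|\mu_k|^{0+}$. The paper instead absorbs the counting of the $(J+1)$-st generation into the single factor $|\tilde\mu_J|^{1-\frac1{100}+}$ (respectively $|\mu_1|^{1-\frac1{100}}$), using $|\mu_{J+1}|\lesssim|\tilde\mu_J|$ so that the divisor-bound cost $|\mu_{J+1}|^{0+}$ is harmlessly absorbed into the $+$ of that same exponent. Since your subsequent ``key calculation'' does handle the $C_J$ counting correctly, this does not affect the validity of the argument; it is just a notational clean-up that would make the display match the rest of the computation.
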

\begin{proof}
As before, for fixed $n^{(j)}$ and $\mu_{j}$ there are at most $o(|\mu_{j}|^{+})$ many choices for $n_{1}^{(1)}, n_{2}^{(1)}, n_{3}^{(1)}$ and note that $\mu_{j}$ is determined by $\tilde\mu_{1},\ldots,\tilde\mu_{j}$.

Let us assume that 
$|\tilde\mu_{J+1}|=|\tilde\mu_{J}+\mu_{J+1}|\lesssim(2J+3)^{3}|\tilde\mu_{J}|^{1-\frac1{100}}$ 
holds in \eqref{sesee}. Then, 
$|\mu_{J+1}|\lesssim|\tilde\mu_{J}|$ and for fixed 
$\tilde\mu_{J}$ there are at most 
$o(|\tilde\mu_{J}|^{1-\frac1{100}})$ many choices for 
$\tilde\mu_{J+1}$ and therefore, also for $\mu_{J+1}=\tilde\mu_{J+1}-\tilde\mu_{J}$. 
For a fixed tree 
$T=T_{k}^{J}\in C(J),\alpha\in B_{k}^{J}\subset T^{\infty}$, 
Lemma \ref{indu}, Remark \ref{expl} and the definition of the 
operator $N_{1}^{t}(v)$, see \eqref{main10}, we estimate 
$\tilde q^{J,t}_{T,\mathbf n}$ as follows (remember that 
$|\hat{\mu}_{T}|\sim|\hat{\mu}_{J}|=\prod_{k=1}^{J}|\tilde\mu_{k}|$)
$$\sum_{\substack{\mathbf n\in\mathcal R(T)\\ \mathbf n_{r}=n}}\|\tilde q^{J,t}_{T,\mathbf n}(\mathbf N_{1}^{t,\alpha}(\{v_{n_{b_{m}}}\}_{m=1}^{b_{k}^{J}},\{w_{n_{r_{\tilde{m}}}}\}_{\tilde{m}=1}^{r_{k}^{J}}))\|_{2}\lesssim$$
$$\sum_{\substack{\mathbf n\in\mathcal R(T)\\ \mathbf n_{r}=n}}\Big(\Big[\|v_{n_{\alpha_{1}}}\|_{2}\|v_{n_{\alpha_{2}}}\|_{2}\|v_{n_{\alpha_{3}}}\|_{2}+\|v_{n_{\alpha_{1}}}\|_{2}|w_{n_{\alpha_{2}}}|\|v_{n_{\alpha_{3}}}\|_{2}+\|v_{n_{\alpha_{1}}}\|_{2}\|v_{n_{\alpha_{2}}}\|_{2}|w_{n_{\alpha_{3}}}|+$$
$$|w_{n_{\alpha_{1}}}|\|v_{n_{\alpha_{2}}}\|_{2}|w_{n_{\alpha_{3}}}|+|w_{n_{\alpha_{1}}}\|_{2}|w_{n_{\alpha_{2}}}|\|v_{n_{\alpha_{3}}}\|_{2}\Big]\prod_{\beta\in B_{k}^{J}\setminus\{\alpha\}}\|v_{n_\beta}\|_{2}\ \prod_{\tilde{m}=1}^{r_{k}^{J}}|w_{n_{r_{\tilde{m}}}}|\Big)\Big(\prod_{k=1}^{J}\frac1{|\tilde\mu_{k}|}\Big).$$
Then for the $\|v_{n_{\alpha_{1}}}\|_{2}\|v_{n_{\alpha_{2}}}\|_{2}\|v_{n_{\alpha_{3}}}\|_{2}$ term, the same calculations work for the other terms, we apply H\"older's inequality and obtain the upper bound
\begin{equation}
\label{hahah2}
\Big(\sum_{\substack{|\mu_{1}|>N\\ |\tilde\mu_{j}|>(2j+1)^{3}N^{1-\frac1{100}}\\ j=2,\ldots,J}}|\tilde\mu_{J}|^{1-\frac1{100}+}\prod_{k=1}^{J}\frac1{|\tilde\mu_{k}|^{2}}|\mu_{k}|^{+}\Big)^{\frac1{2}}
\end{equation}
$$\Big(\sum_{\substack{\mathbf n\in\mathcal R(T)\\ \mathbf n_{r}=n}}\|v_{n_{\alpha_{1}}}\|_{2}^{2}\|v_{n_{\alpha_{2}}}\|_{2}^{2}\|v_{n_{\alpha_{3}}}\|_{2}^{2}\prod_{\beta\in B_{k}^{J}\setminus\{\alpha\}}\|v_{n_{\beta}}\|_{2}^{2}\ \prod_{\tilde{m}=1}^{r_{k}^{J}}|w_{n_{r_{\tilde{m}}}}|^{2}\Big)^{\frac1{2}}.$$
An easy calculation shows that the first sum behaves like $N^{-\frac{J-1}{2}+\frac{(J-2)}{200}+}$ and then by taking the $l^{2}(\Z)$ norm and use Young's inequality we arrive at 
$$N^{-\frac{J-1}{2}+\frac{(J-2)}{200}+}\ \|v\|_{L^{2}(\R)}^{b_{k}^{J}+2}\|w\|_{L^{2}(\T)}^{r_{k}^{J}}.$$
Similar considerations apply in the case that $\alpha\in R_{k}^{J}\subset T^{\infty}$ and give the upper bound 
$$N^{-\frac{J-1}{2}+\frac{(J-2)}{200}+}\ \|v\|_{L^{2}(\R)}^{b_{k}^{J}}\|w\|_{L^{2}(\T)}^{r_{k}^{J}+2}.$$

If $|\tilde\mu_{J+1}|\lesssim(2J+3)^{3}|\mu_{1}|^{1-\frac1{100}}$ holds in \eqref{sesee}, then for fixed $\mu_{j}$, $j=1,\ldots,J$, there are at most $O(|\mu_{1}|^{1-\frac1{100}})$ many choices for $\mu_{J+1}$. The same argument as above leads us to exactly the same expressions as in \eqref{hahah2} but with the first sum replaced by the following
$$\Big(\sum_{\substack{|\mu_{1}|>N\\ |\tilde\mu_{j}|>(2j+1)^{3}N^{1-\frac1{100}}\\ j=2,\ldots,J}}|\mu_{1}|^{1-\frac1{100}}\prod_{k=1}^{J}\frac1{|\tilde\mu_{k}|^{2}}|\mu_{k}|^{+}\Big)^{\frac1{2}},$$
which again is bounded from above by $N^{-\frac{J-1}{2}+\frac{(J-2)}{200}+}$ and the proof is complete.
\end{proof}

\begin{remark}
\label{reme}
For $s>0$ we have to observe that all previous lemmata hold true 
if we replace the $l^{2}L^{2}$ norm by the $l^{2}_{s}L^{2}$ 
norm and the $L^{2}(\R)$ norm by the $H^{s}(\R)$ norm. 
To see this, consider $n^{(j)}$ large. Then there exists at 
least one of $n_{1}^{(j)},n_{2}^{(j)},n_{3}^{(j)}$ such that 
$|n_{k}^{(j)}|\geq\frac13|n^{(j)}|$, $k\in\{1,2,3\}$, since 
we have the relation 
$n^{(j)}\approx n_{1}^{(j)}-n_{2}^{(j)}+n_{3}^{(j)}$. 
Therefore, in the estimates of the $J$th generation, there 
exists at least one frequency $n_{k}^{(j)}$ for some $j\in\{1,\ldots,J\}$ with the property
$$\langle n\rangle^{s}\leq 3^{js}\langle n_{k}^{(j)}\rangle^{s}\leq 3^{Js}\langle n_{k}^{(j)}\rangle ^{s}.$$
This exponential growth does not affect our calculations due to the double factorial decay in the denominator of \eqref{factori}.
\end{remark}

Before we finish this section let us state a lemma about the behaviour of the remainder operator $N_{2}^{(J)}$ as $J\to\infty$.

\begin{lemma}
\label{dankda}
Suppose that $w$ is a smooth periodic solution of \eqref{perpe} in $L^{2}(\T)$ such that its Fourier coefficients $\{w_{m}\}_{m\in\Z}\in l^{1}(\Z)$ and $v$ is a smooth solution of \eqref{nonperpe} such that $v\in M_{2,1}(\R)\subset L^{2}(\R)$. Then 
$$\lim_{J\to\infty}\|N_{2}^{(J+1)}(v)\|_{l^{2}(\Z)L^{2}(\R)}=0.$$
\end{lemma}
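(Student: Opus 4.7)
The plan is to show decay by combining the structural estimates from Lemmas \ref{indu}, \ref{finaal}, \ref{finaal2}, Remark \ref{expl}, and the periodic analogue of Lemma \ref{lem} for $\mathscr N_1^t$, while crucially exploiting the \emph{additional} smoothness $v\in M_{2,1}(\R)$ and $\{w_m\}\in l^1(\Z)$ to sum over the frequencies introduced by $\mathbf N_1^{t,\alpha}$. The key distinction from the estimate for $N_1^{(J+1)}$ in Lemma \ref{finaal2} is that $N_2^{(J+1)}$ lives on $C_J^c$, where we have only a \emph{lower} bound on $|\tilde\mu_{J+1}|$; consequently the divisor-bound trick from \eqref{hahah2} that limits $\mu_{J+1}$ to $O(\max(|\tilde\mu_J|,|\mu_1|)^{1-1/100})$ choices is no longer available, and the new index sum at the terminal $\alpha$ must be closed by the stronger norms.

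For each tree $T=T_k^J\in C(J)$, each $\alpha\in T^\infty$ and each index function $\mathbf n\in\mathcal R(T)$ with $\mathbf n_r=n$, I first apply Lemma \ref{indu} to bound $\|\tilde q^{J,t}_{T,\mathbf n}(\cdots)\|_{L^2(\R)}$ by $|\hat\mu_T|^{-1}$ times a product of $\|v_{n_{b_m}}\|_2$ and $|w_{n_{r_{\tilde m}}}|$ over the terminal nodes. At the node $\alpha$, the factor corresponding to $\mathbf N_1^{t,\alpha}$ is estimated pointwise: if $\alpha\in B_k^J$ I use the five bounds of Remark \ref{expl} summed over the new children $(n_{\alpha_1},n_{\alpha_2},n_{\alpha_3})$ subject to $|\Phi|>N$; if $\alpha\in R_k^J$, I use the analogous $l^1$-convolution estimate on $\mathscr N_1^t(w)$. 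This yields a control of the integrand by a product over a larger set of terminal indices.

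Next I sum over all terminal indices, \emph{decoupling} them via
\begin{equation*}
\sum_{n\in\Z}\|v_n\|_{L^2(\R)}\le \|v\|_{M_{2,1}(\R)},\qquad \sum_{n\in\Z}|w_n|=\|w\|_{l^1(\Z)}<\infty,
\end{equation*}
which absorbs the $\mathbf N_1^{t,\alpha}$-introduced sums without any extra phase cost. For the remaining tree denominator $|\hat\mu_T|=\prod_{k=1}^J|\tilde\mu_k|$, I apply Cauchy--Schwarz in $\mathbf n$, the divisor bound \eqref{num} giving $o(|\mu_k|^+)$ choices of $(n_1^{(k)},n_2^{(k)},n_3^{(k)})$ per fixed $(n^{(k)},\mu_k)$, and the constraints $|\mu_1|>N$ together with $|\tilde\mu_j|>(2j+1)^3 N^{1-1/100}$ for $2\le j\le J$ valid on $C_1^c\cap\cdots\cap C_{J-1}^c$. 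Exactly as in \eqref{hahah}, the resulting phase sum is of size $N^{-J/2+(J-1)/200+}$, and more importantly carries the factorial decay $(2J-1)!!^{-3/2}$ that dominates the tree count.

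Finally I sum over $C(J)$: by Lemma \ref{hopcor} and \eqref{growthoftrees} there are $N(J)\le 5^J(2J-1)!!$ trees and $(2J+1)$ choices of $\alpha$, so the total bound is majorised by
\begin{equation*}
\|N_2^{(J+1)}(v)\|_{l^2(\Z)L^2(\R)}\lesssim (2J+1)\,\frac{5^J}{(2J-1)!!^{1/2}}\,N^{-\tfrac{J}{2}+\tfrac{J}{200}+}\,\bigl(\|v\|_{M_{2,1}(\R)}+\|w\|_{l^1(\Z)}\bigr)^{2J+3},
\end{equation*}
as in \eqref{factori}, and $(2J-1)!!^{-1/2}$ beats all the other factors as $J\to\infty$, yielding the vanishing limit. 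The main technical obstacle is the careful bookkeeping of Step 2: distributing the $M_{2,1}$ and $l^1$ norms correctly between the original $2J+1$ terminal nodes of $T$ and the $2$ extra frequencies produced by $\mathbf N_1^{t,\alpha}$, so that every free index is absorbed by a summable norm and no stray $l^2$ structure prevents the factorial decay from closing the argument.
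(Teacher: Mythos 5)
Your proposal is correct and follows essentially the same route as the paper's proof: triangle inequality over trees, nodes, and index functions; Lemma \ref{indu} for $\tilde q^{J,t}_{T,\mathbf n}$; Remark \ref{expl} (and its periodic analogue) to expand $\mathbf N_1^{t,\alpha}$; closing the freshly introduced child frequencies using the $l^1$-type norms $\|v\|_{M_{2,1}}$ and $\|\{w_m\}\|_{l^1}$ via Young's inequality; and letting the double-factorial decay from the phase constraints beat the tree count $5^J(2J-1)!!$. You also correctly identify the key structural reason the $M_{2,1}/l^1$ hypotheses are needed, namely that on $C_J^c$ the last phase $\tilde\mu_{J+1}$ is only bounded below, so the divisor count used for $N_1^{(J+1)}$ is unavailable and the extra child sum must be absorbed by a summable norm. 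The only (harmless) difference is that you retain an explicit $N^{-J/2+}$ factor in the final bound, whereas the paper discards it, keeping only $\frac{5^J}{(2J-1)!!^{1/2}}$, since the factorial decay alone already yields the vanishing limit for fixed $N$.
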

\begin{proof}
Obviously,
$$\|N_{2}^{(J+1)}(v)\|_{2}\leq \sum_{T\in C(J)}\sum_{\alpha\in T^{\infty}}\sum_{\substack{\mathbf n\in\mathcal R(T)\\ \mathbf n_{r}=n}}\|\tilde{q}^{J,t}_{T,\mathbf n}(\mathbf N_{1}^{t,\alpha}(\{v_{n_{b_{m}}}\}_{m=1}^{b_{k}^{J}},\{w_{n_{r_{\tilde{m}}}}\}_{\tilde{m}=1}^{r_{k}^{J}}))\|_{2}.$$
For a fixed tree $T=T_{k}^{J}\in C(J)$ assume that $\alpha\in B_{k}^{J}$. Using Lemma \ref{indu} we have the upper bound 
$$\sum_{\substack{\mathbf n\in\mathcal R(T)\\ \mathbf n_{r}=n}}\prod_{\beta\in B_{k}^{J}\setminus\{\alpha\}}\|v_{n_{\beta}}\|_{2}\ \frac{\|N_{1}^{t}(v)(n_{\alpha})\|_{2}}{\prod_{k=1}^{J}|\tilde{\mu}_{k}|}\ \prod_{\tilde{m}=1}^{r_{k}^{J}}|w_{n_{r_{\tilde{m}}}}|.$$
By the definition of the operator $N_{1}^{t}(v)$, see \eqref{main10},  and Remark \ref{expl}, we bound this further 
$$\sum_{\substack{\mathbf n\in\mathcal R(T)\\ \mathbf n_{r}=n}}\prod_{\beta\in B_{k}^{J}\setminus\{\alpha\}}\|v_{n_{\beta}}\|_{2}\ \prod_{\tilde{m}=1}^{r_{k}^{J}}|w_{n_{r_{\tilde{m}}}}|\Big(\sum_{\substack{n_{\alpha}\approx n_{\alpha_{1}}-n_{\alpha_{2}}+n_{\alpha_{3}} \\ n_{\alpha_{1}}\not\approx n_{\alpha}\not\approx n_{\alpha_{3}}}}\|v_{n_{\alpha_{1}}}\|_{2}\|v_{n_{\alpha_{2}}}\|_{2}\|v_{n_{\alpha_{3}}}\|_{2}+$$
$$\|v_{n_{\alpha_{1}}}\|_{2}|w_{n_{\alpha_{2}}}|\|v_{n_{\alpha_{3}}}\|_{2}+\|v_{n_{\alpha_{1}}}\|_{2}\|v_{n_{\alpha_{2}}}\|_{2}|w_{n_{\alpha_{3}}}|+|w_{n_{\alpha_{1}}}|\|v_{n_{\alpha_{2}}}\|_{2}|w_{n_{\alpha_{3}}}|+$$
$$|w_{n_{\alpha_{1}}}\|_{2}|w_{n_{\alpha_{2}}}|\|v_{n_{\alpha_{3}}}\|_{2}\Big)\ \frac1{\prod_{k=1}^{J}|\tilde{\mu}_{k}|}.$$
Let us treat only the sum that contains the quantity 
$\|v_{n_{\alpha_{1}}}\|_{2}|w_{n_{\alpha_{2}}}|\|v_{n_{\alpha_{3}}}\|_{2}$, 
the remaining terms can be treated in a similar manner. As in the proof of Lemma \ref{finaal}, H\"older's inequality implies the upper bound 
$$\frac1{(2J-1)!!^{\frac32}}\Big(\sum_{\substack{\mathbf n\in\mathcal R(T)\\ \mathbf n_{r}=n}}\prod_{\beta\in B_{k}^{J}\setminus\{\alpha\}}\|v_{n_{\beta}}\|_{2}^{2}\prod_{\tilde{m}=1}^{r_{k}^{J}}|w_{n_{r_{\tilde{m}}}}|^{2}\Big(\sum_{\substack{n_{\alpha}\approx n_{\alpha_{1}}-n_{\alpha_{2}}+n_{\alpha_{3}} \\ n_{\alpha_{1}}\not\approx n_{\alpha}\not\approx n_{\alpha_{3}}}}\|v_{n_{\alpha_{1}}}\|_{2}|w_{n_{\alpha_{2}}}|\|v_{n_{\alpha_{3}}}\|_{2}\Big)^{2}\Big)^{\frac1{2}}.$$
Then by taking the $l^{2}(\Z)$ norm we arrive at 
$$\frac1{(2J-1)!!^{\frac32}}\Big(\sum_{n\in\Z}\ \sum_{\substack{\mathbf n\in\mathcal R(T)\\ \mathbf n_{r}=n}}\prod_{\beta\in B_{k}^{J}\setminus\{\alpha\}}\|v_{n_{\beta}}\|_{2}^{2}\prod_{\tilde{m}=1}^{r_{k}^{J}}|w_{n_{r_{\tilde{m}}}}|^{2}(\{\|v_{n_{\alpha_{1}}}\|_{2}\}\ast\{|w_{n_{\alpha_{2}}}|\}\ast\{\|v_{n_{\alpha_{3}}}\|_{2}\})^{2}(n_{\alpha})\Big)^{\frac1{2}} ,$$
applying Young's inequality in $l^{1}(\Z)$ for $2J+1$ sequences we get
$$\frac1{(2J-1)!!^{\frac32}}\ \|v\|_{L^{2}(\R)}^{b_{k}^{J}-1}\ \|w\|_{L^{2}(\T)}^{r_{k}^{J}}\ \|\{\|v_{n_{\alpha_{1}}}\|_{2}\}\ast\{|w_{n_{\alpha_{2}}}|\}\ast\{\|v_{n_{\alpha_{3}}}\|_{2}\}\|_{l^{2}},$$
and, again using Young's inequality together with the embedding $M_{2,1}(\R)\hookrightarrow L^{2}(\R)$ and the assumption that the Fourier coefficients of $w$ are in $l^{1}(\Z)$, this  implies the upper bound  
$$\frac1{(2J-1)!!^{\frac32}}\ \|v\|_{M_{2,1}}^{b_{k}^{J}-1}\ \|\{w_{m}\}_{m\in\Z}\}\|^{r_{k}^{J}}_{l^{1}(\Z)}\ \|v\|_{M_{2,1}}^{2}\ \|\{w_{m}\}_{m\in\Z}\|_{l^{1}(\Z)}=\frac{\|v\|_{M_{2,1}}^{b_{k}^{J}+1}\|\{w_{m}\}_{m\in\Z}\|_{l^{1}}^{r_{k}^{J}+1}}{(2J-1)!!^{\frac32}}.$$
Similar estimates apply in the case  $\alpha\in R_{k}^{J}$.

Finally, by adding up all these expressions for every different colored tree $T\in C(J)$, see \eqref{growthoftrees}, we get 
$$\|N_{2}^{(J+1)}(v)\|_{l^{2}(\Z)L^{2}(\R)}\lesssim\frac{5^{J}}{(2J-1)!!^{\frac12}}\ (\|v\|_{M_{2,1}}+\|\{w_{m}\}_{m\in\Z}\|_{l^{1}(\Z)})^{2J+3},$$
which goes to zero as $J\to\infty$. So the proof is complete.  
\end{proof}

\end{section}

\begin{section}{existence of weak solutions in the extended sense}
\label{weakextended}
In this subsection we prove Theorem \ref{th1}. The calculations are the similar as in \cite{GKO}, \cite{CHKP}, \cite{NP}, however, with the additional difficulty that we have to handle mixed continuous and discrete variables. 
For this reason we only mention the basic steps of the argument, concentrating mainly on the important differences. 

We start by defining the partial sum operator $\Gamma_{v_{0}}^{(J)}$ as
\begin{equation}
\label{gamaa}
\Gamma_{v_{0}}^{(J)}v(t)=v_{0}+\sum_{j=2}^{J}N_{0}^{(j)}(v)(n)-\sum_{j=2}^{J}N_{0}^{(j)}(v_{0})(n)
\end{equation}
$$+\int_{0}^{t}R_{1}^{\tau}(v)(n)+R_{2}^{\tau}(v)(n)+\sum_{j=2}^{J}N_{r}^{(j)}(v)(n)+\sum_{j=1}^{J}N_{1}^{(j)}(v)(n)\ d\tau,$$
where we have $N_{1}^{(1)}\coloneqq N_{11}^{t}$ from \eqref{main13}, $N_{0}^{(2)}\coloneqq N_{21}^{t}$ from \eqref{nex} and $v_{0}\in H^{s_{1}}(\R)$ is our initial data. Here we assume that we have smooth solutions (see Section \ref{smoothsection} so that all calculations of Sections \ref{firststeps} and \ref{treenotation} are applicable. Moreover, let us state that all operators appearing in the definition of $\Gamma_{v_{0}}^{(J)}v(t)$ depend also on the fixed function $w\in X_{T_{0}}(\T)=C([0,T_{0}],H^{s_{2}}(\T))$ that is the solution of \eqref{perpe} with initial data $w_{0}\in H^{s_{2}}(\T)$. For this $w$ we know that 
\begin{equation}
\label{maybeineed}
\|w\|_{X_{T_{0}}(\T)}\lesssim\|w_{0}\|_{H^{s_{2}}(\T)}.
\end{equation}

In the following we will denote by $X_{T}(\R)=C([0,T],H^{s_{1}}(\R))$. Our goal is to show that the series appearing on the RHS of \eqref{gamaa} converge absolutely in $X_{T}(\R)$ for sufficiently small $T>0,$ if $v\in X_{T}(\R),$ even for $J=\infty$. Indeed, by Lemmata \ref{lem}, \ref{lemle}, \ref{finaal}, and \ref{finaal2} we obtain 
\begin{equation}
\label{argg}
\|\Gamma_{v_{0}}^{(J)}v\|_{X_{T}(\R)}\leq\|v_{0}\|_{H^{s_{1}}(\R)}+C\sum_{j=2}^{J}N^{-\frac{j-1}{2}+\frac{j-2}{200}+}(\|v\|_{X_{T}(\R)}^{2j-1}+\|v_{0}\|_{H^{s_{1}}(\R)}^{2j-1}+\|w\|_{X_{T_{0}}(\T)}^{2j-1})
\end{equation}
$$+CT\Big[\|v\|^{3}_{X_{T}(\R)}+\|w\|_{X_{T_{0}}(\T)}^{3}+\sum_{j=2}^{J}N^{-\frac{j-1}{2}+\frac{j-2}{200}+}(\|v\|_{X_{T}(\R)}^{2j+1}+\|w\|_{X_{T_{0}}(\T)}^{2j+1})+$$
$$N^{\frac1{2}+}(\|v\|_{X_{T}(\R)}^{3}+\|w\|_{X_{T_{0}}(\T)}^{3})+\sum_{j=2}^{J}N^{-\frac{j-2}{2}+\frac{j-3}{200}+}(\|v\|_{X_{T}(\R)}^{2j+1}+\|w\|_{X_{T_{0}}(\T)}^{2j+1})\Big].$$
From \eqref{maybeineed} we estimate $\|w\|_{X_{T_{0}}(\T)}$ by $\|w_{0}\|_{H^{s_{2}}(\T)}$ and assuming that the sum $\|v_{0}\|_{H^{s_{1}}(\R)}+\|w_{0}\|_{H^{s_{2}}(\T)}\leq R$ and $\|v\|_{X_{T}(\R)}\leq\tilde R$, with $\tilde R\geq R\geq1$ we may continue from \eqref{argg} in exactly the same way as in \cite{GKO}, \cite{CHKP}, \cite{NP} to show that for sufficiently large $N$ and sufficiently small $T=T(\|v_{0}\|_{H^{s_{1}}(\R)}+\|w_{0}\|_{H^{s_{2}}(\T)})>0$ the partial sum operators $\Gamma_{v_{0}}^{(J)}$ are well defined in $X_{T}(\R)$, for every $J\in\mathbb N\cup\{\infty\}$. We will write $\Gamma_{v_{0}}$ for $\Gamma_{v_{0}}^{(\infty)}$.

Our next step is, given an initial data 
$u_{0}=v_{0}+w_{0}\in H^{s_{1}}(\R)+H^{s_{2}}(\T)$, to construct 
a solution $u$ with the properties claimed in Theorem \ref{th1}. 
We start with the periodic part $w_{0}$. As it was done in 
\cite{GKO} we approximate $w_{0}$ by smooth initial data 
$w_{0}^{(m)}\in H^{\infty}(\T)$ with
\begin{equation}
\label{wapprox}
\lim_{m\to\infty}w_{0}^{(m)}=w_{0},\ \mbox{in}\ H^{s_{2}}(\T).
\end{equation}
For such initial data $w_{0}^{(m)}$ we know that we can find smooth solution $w^{(m)}$ of NLS \eqref{perpe} in $C([0,T],H^{s_{2}}(\T))$ that satisfies Duhamel's formulation
\begin{equation}
\label{wduhamel}
w^{(m)}=w_{0}^{(m)}\pm\int_{0}^{t}S(-\tau)[|S(\tau)w^{(m)}|^{2}S(\tau)w^{(m)}]\ d\tau
\end{equation}
and from \cite{GKO} it follows that there is a common time of existence $T_{0}=T_{0}(\|w_{0}\|_{H^{s_{2}}(\T)})$ for all solutions $w^{(m)}$. In addition, they show that the sequence $\{w^{(m)}\}_{m\in\mathbb N}$ is Cauchy in $X_{T_{0}}(\T)=C([0,T_{0}],H^{s_{2}}(\T))$ and that the limit function $w\in X_{T_{0}}(\T)$ satisfies NLS \eqref{perpe} in the sense of Definition \ref{def6}.   

We also approximate $v_{0}$ by smooth functions $v_{0}^{(m)}\in H^{s_{1}}(\R),$ so that
\begin{equation}
\label{vapprox}
\lim_{m\to\infty}v_{0}^{(m)}=v_{0},\ \mbox{in}\ H^{s_{1}}(\R),
\end{equation}
and by Section \ref{smoothsection} we may find smooth solutions $v^{(m)}$ of \eqref{nonperpe} in $X_{T}(\R)=C([0,T], H^{s_{1}}(\R))$ that satisfy Duhamel's formulation
\begin{equation}
\label{argg7}
v^{(m)}=v_{0}^{(m)}\pm\int_{0}^{t}S(-\tau)[G(S(\tau)w^{(m)},S(\tau)v^{(m)})]\ d\tau=
\end{equation}
$$v_{0}^{(m)}+\sum_{j=2}^{\infty}N_{0}^{(j)}(v^{(m)})(n)-\sum_{j=2}^{\infty}N_{0}^{(j)}(v_{0}^{(m)})(n)$$
$$+\int_{0}^{t}R_{1}^{\tau}(v^{(m)})(n)+R_{2}^{\tau}(v^{(m)})(n)+\sum_{j=2}^{\infty}N_{r}^{(j)}(v^{(m)})(n)+\sum_{j=1}^{\infty}N_{1}^{(j)}(v^{(m)})(n)\ d\tau=\Gamma_{v_{0}^{(m)}}v^{(m)},$$
where we used Lemma \ref{dankda}, namely that the remainder operator goes to zero as $J\to\infty$. From this, following exactly the same arguments as in \cite{GKO}, \cite{CHKP}, \cite{NP} we can prove that \eqref{argg7} holds in $X_{T_{0}}(\R)$ for the same time $T_{0}=T_{0}(R)>0$ independent of $m\in\mathbb N$ and also that 
\begin{equation}
\label{argg10}
\|v^{(m_{1})}-v^{(m_{2})}\|_{X_{T_{0}(\R)}}=\|\Gamma_{v_{0}^{(m_{1})}}v^{(m_{1})}-\Gamma_{v_{0}^{(m_{2})}}v^{(m_{2})}\|_{X_{T_{0}}(\R)}\leq c\ \|v_{0}^{(m_{1})}-v_{0}^{(m_{2})}\|_{H^{s_{1}}(\R)}
\end{equation}
for some constant $c>0$. 
Therefore, the sequence $\{v^{(m)}\}_{m\in\mathbb N}$ is Cauchy in the Banach space $X_{T_{0}}(\R)$, we denote by $v^{\infty}$ its limit in $X_{T_{0}}(\R)$. 

We will show that 
$V^{\infty}=S(t)v^{\infty}$ satisfies NLS \eqref{nonperpe} in the sense of Definition \ref{def3}. For convenience, we drop the superscript $\infty$ and write $V$, and $v$. In addition, let $V^{(m)}\coloneqq S(t)v^{(m)}, W^{(m)}=S(t)w^{(m)}$ and 
$W=S(t)w$. Obviously, $V^{(m)}\to V$ in $X_{T_{0}}(\R)$, because $v^{(m)}\to v$ in $X_{T_{0}}(\R)$, and similarly 
$W^{(m)}\to W$ in $X_{T_{0}}(\T)$ since $w^{(m)}\to w$ in 
$X_{T_{0}}(\T)$. 
Thus, $\partial_{x}V^{(m)}\to\partial_{x} V, \partial_{t}V^{(m)}\to\partial_{t}V$ and $\partial_{x}W^{(m)}\to\partial_{x}W$, 
 $\partial_{t}W^{(m)}\to\partial_{t}W$ in the sense of distributions. Since $V^{(m)}$ satisfies \eqref{nonperpe} and $W^{(m)}$ satisfies \eqref{perpe} for every $m\in\mathbb N$, 
 we have that 
\begin{equation}
\label{perconvergdist}
\mathcal N(W^{(m)})=|W^{(m)}|^{2}W^{(m)}=-i\partial_{t}W^{(m)}+\partial_{x}^{2}W^{(m)}
\end{equation}
converges to some distribution $\tilde{w}$, which is equal to 
$\mathcal N(W)$ interpreted in the sense of 
Definition \ref{def6}, as it was shown in \cite{GKO}.
and
\begin{equation}
\label{convergdist}
G(W^{(m)},V^{(m)})=|W^{(m)}+V^{(m)}|^{2}(W^{(m)}+V^{(m)})-|W^{(m)}|^{2}W^{(m)}=-i\partial_{t}V^{(m)}+\partial_{x}^{2}V^{(m)}
\end{equation}
converges to some distribution $\tilde{v}$. Our claim is the following.

\begin{proposition}
\label{argg12}
Let $\tilde{v}$ be the limit of $G(W^{(m)},V^{(m)})$ in the sense of distributions as $m\to\infty$. Then $\tilde{v}=G(W,V)$ where $G(W,V)$ is to be interpreted in the sense of Definition \ref{def3}.
\end{proposition}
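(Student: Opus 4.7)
The plan is to reduce the statement to a triangle-inequality argument against the smooth approximants $(W^{(m)},V^{(m)})$ and then exploit the tree/normal form expansion. Fix an arbitrary sequence $\{T_N\}_{N\in\Nat}$ of Fourier cutoff operators and a test function $\phi\in C_c^{\infty}((0,T_0)\times\R)$. I would decompose
$$\langle G(T_N W,T_N V)-\tilde v,\phi\rangle = A_{N,m}+B_{N,m}+C_m,$$
with $A_{N,m}=\langle G(T_N W,T_N V)-G(T_N W^{(m)},T_N V^{(m)}),\phi\rangle$, $B_{N,m}=\langle G(T_N W^{(m)},T_N V^{(m)})-G(W^{(m)},V^{(m)}),\phi\rangle$, and $C_m=\langle G(W^{(m)},V^{(m)})-\tilde v,\phi\rangle$, and dispatch the three pieces in turn. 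The term $C_m$ tends to zero as $m\to\infty$ by the very definition of $\tilde v$. For $B_{N,m}$, for each fixed $m$ the functions $W^{(m)},V^{(m)}$ are smooth, and because the multipliers $m_N$ are uniformly bounded and converge pointwise to $1$, dominated convergence on the Fourier side yields $T_N W^{(m)}\to W^{(m)}$ and $T_N V^{(m)}\to V^{(m)}$ in every $L^{p}_{\mathrm{loc}}$; since $G$ is a cubic polynomial, $B_{N,m}\to 0$ as $N\to\infty$ for each fixed $m$.

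The key step is to control $A_{N,m}$ uniformly in $N$, for which I would invoke the tree expansion developed in Sections \ref{firststeps}--\ref{treenotation}. The idea is to rewrite, for both pairs $(v,w)$ and $(v^{(m)},w^{(m)})$, the time-integrated quantities $\int_{0}^{t}G(\cdot,\cdot)\,d\tau$ in terms of the boundary contributions $N_{0}^{(j)}$ plus integrals of the resonant operators $R_{1}^{\tau},R_{2}^{\tau}$ and the tree terms $N_{r}^{(j)},N_{1}^{(j)}$, starting from the Duhamel-tree identity \eqref{argg7} together with the vanishing of the remainder $N_{2}^{(J+1)}$ as $J\to\infty$ (Lemma \ref{dankda}). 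By Lemmas \ref{lem}, \ref{lemle}, \ref{finaal}, \ref{finaal2}, every tree operator is multilinear with operator bounds that depend only on the $L^{2}$ norms of its inputs, so after applying $T_N$ termwise, using that $T_N$ commutes with every operator appearing in the expansion and is uniformly bounded on $L^{2}(\R)$ and on $L^{2}(\T)$, a cubic-polynomial difference estimate yields
$$|A_{N,m}|\lesssim \|\phi\|_{L^{1}_{t,x}}\bigl(\|v-v^{(m)}\|_{X_{T_0}(\R)}+\|w-w^{(m)}\|_{X_{T_0}(\T)}\bigr)\bigl(\|v_0\|_{H^{s_1}(\R)}+\|w_0\|_{H^{s_2}(\T)}+1\bigr)^{2},$$
with constants independent of $N$; hence $\sup_N |A_{N,m}|\to 0$ as $m\to\infty$.

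Combining the three steps, for every $\varepsilon>0$ I would first fix $m$ large enough that $\sup_N|A_{N,m}|+|C_m|<\varepsilon$, then send $N\to\infty$ so that $|B_{N,m}|<\varepsilon$. This proves $G(T_N W,T_N V)\to\tilde v$ in $\mathcal D'((0,T_0)\times\R)$, and hence, by Definition \ref{def2}, $G(W,V)$ exists and equals $\tilde v$. The main obstacle is precisely the $N$-uniformity of the bound on $A_{N,m}$: one must verify that the differentiation-by-parts manipulations of Section \ref{firststeps}, originally performed on the smooth Fourier-picture equation, survive the termwise application of $T_N$, and that the estimates depend only on $\|v\|_{L^{2}(\R)}$ and $\|w\|_{L^{2}(\T)}$. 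This is guaranteed by the explicit form of the tree estimates, which rely only on $l^{2}$-based H\"older and Young inequalities together with the divisor bound \eqref{num}, all of which are unaffected by the insertion of a uniformly bounded Fourier multiplier.
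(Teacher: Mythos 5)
Your three-term decomposition of $\langle G(T_N W,T_N V)-\tilde v,\phi\rangle$ and the order in which you take limits ($m$ first, uniformly in $N$, then $N$) is exactly the scheme used in the paper, and your treatment of $C_m$ and $B_{N,m}$ is in essence the paper's argument, though for $B_{N,m}$ the paper is more careful about the periodic/non-periodic mix: since $W^{(m)}\in L^{2}(\T)\not\subset L^{2}(\R)$, one must pair $\|W^{(m)}\|_{L^{\infty}_{T,x}}$ with local $L^{2}$ norms of $\phi$ on unit intervals and then sum, rather than appeal to a generic $L^{p}_{\mathrm{loc}}$ dominated-convergence statement.

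The genuine gap is in your treatment of $A_{N,m}$. The assertion that ``$T_N$ commutes with every operator appearing in the expansion'' is false in the sense you need: the issue is not whether $T_N$ commutes with each fixed multilinear symbol (it does not, since each tree operator carries a nontrivial multiplier acting on the output variable), but that $T_N G(W,V)\neq G(T_N W,T_N V)$ because $G$ is a cubic nonlinearity. More fundamentally, the Duhamel-tree identity \eqref{argg7} and the normal-form expansion of $\int_0^t G\,d\tau$ were derived by repeated differentiation by parts using the equations \eqref{nperpe} and \eqref{mainmain} for $\partial_t w_n$ and $\partial_t v_n$; the truncated functions $T_N W$, $T_N V$ do not satisfy those equations, so there is no tree expansion for $\int_0^t G(T_N W,T_N V)\,d\tau$ into which you can substitute. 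Your claimed bound on $A_{N,m}$ therefore does not follow from the cited lemmata.

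The paper avoids this entirely: since $T_N$ has compactly supported symbol, $T_N V\in C([0,T],H^{\infty}(\R))$ and $T_N W\in C([0,T],H^{\infty}(\T))$, so $G(T_N W,T_N V)$ is an honest function, and the difference $G(T_N W^{(m)},T_N V^{(m)})-G(T_N W,T_N V)$ is estimated directly as a polynomial difference, term by term across the five monomials, using H\"older with mixed $L^{\infty}(\T)$, $L^{4}$, and local $L^{2}$ norms and Parseval's identity. The tree machinery is invoked by the paper only at the level of a soft Cauchy-in-$S'$ statement for $G(W^{(m)},V^{(m)})$, not as a substitute for these concrete estimates of the $T_N$-truncated expressions. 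You would need to replace your tree-expansion argument for $A_{N,m}$ with a direct multilinear estimate of this kind.
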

\begin{proof}
Consider a sequence of Fourier cutoff multipliers $\{T_{N}\}_{N\in\mathbb N}$ as in Definition \ref{def1}. We will prove that 
$$\lim_{N\to\infty} G(T_{N}W,T_{N}V)=\tilde{v}$$
in the sense of distributions. Let $\phi$ be a test function and $\epsilon>0$ a fixed given number. Our goal is to find $N_{0}\in\mathbb N$ such that for all $N\geq N_{0}$ we have
\begin{equation}
\label{argg13}
|\langle \tilde{v}- G(T_{N}W,T_{N}V), \phi\rangle|<3\epsilon.
\end{equation}
The LHS can be estimated by 
\begin{equation}
\label{nownowg}
|\langle \tilde{v}-G(W^{(m)},V^{(m)}),\phi\rangle|+|\langle G(W^{(m)},V^{(m)})-G(T_{N}W^{(m)},T_{N}V^{(m)}),\phi\rangle|+
\end{equation}
$$|\langle G(T_{N}W^{(m)},T_{N}V^{(m)})-G(T_{N}W,T_{N}V),\phi\rangle|.$$
The first term is estimated very easily since by the definition of $\tilde{v}$ we have that
\begin{equation}
\label{argg14}
|\langle \tilde{v}- G(W^{(m)},V^{(m)}), \phi\rangle|< \epsilon,
\end{equation}
for sufficiently large $m\in\mathbb N$.

To continue, we consider the second summand of \eqref{nownowg} 
for fixed $m$. Writing out the difference, we see that we have 
to estimate five expressions
\begin{multline*}
\langle |V^{(m)}|^{2}V^{(m)}-|T_{N}V^{(m)}|^{2}T_{N}V^{(m)},\phi\rangle+\langle (W^{(m)})^{2}\overline{V^{(m)}}-(T_{N}W^{(m)})^{2}\overline{T_{N}V^{(m)}},\phi\rangle \\
+\langle (V^{(m)})^{2}\overline{W^{(m)}}-(T_{N}V^{(m)})^{2}\overline{T_{N}W^{(m)}},\phi\rangle+2\ \langle |W^{(m)}|^{2}V^{(m)}-|T_{N}W^{(m)}|^{2}T_{N}V^{(m)},\phi\rangle\\
+2\ \langle |V^{(m)}|^{2}W^{(m)}-|T_{N}V^{(m)}|^{2}T_{N}W^{(m)},\phi\rangle.
\end{multline*}
The first was estimated in \cite{CHKP} and \cite{NP}. For the second term we note 
\begin{multline*} 
\Big|\int\int (W^{(m)})^{2}\overline{(Id-T_{N})V^{(m)}}\phi+\overline{T_{N}V^{(m)}}(W^{(m)}-T_{N}W^{(m)})(W^{(m)}+T_{N}W^{(m)})\phi\Big| \\
\le \|W^{(m)}\|_{L^{\infty}_{T,x}}^{2}\|(Id-T_{N})V^{(m)}\|_{L^{2}_{T,x}}\|\phi\|_{L^{2}_{T,x}} 
	+\|T_{N}V^{(m)}\|_{L^{\infty}_{T,x}}\|W^{(m)}\\
+T_{N}W^{(m)}\|_{L^{\infty}_{T,x}}\int\int|W^{(m)}-T_{N}W^{(m)}||\phi|.
\end{multline*}
The integral term can be written as 
\begin{align*}
\int_{0}^{T}\sum_{k\in\Z}\ \int_{k}^{k+1}|W^{(m)}-T_{N}W^{(m)}||\phi|
	&\leq\int_{0}^{T}\sum_{k\in\Z}\|(Id-T_{N})W^{(m)}\|_{L^{2}(k,k+1)}\|\phi\|_{L^{2}(k,k+1)}\\ 
&= \int_{0}^{T}\|(Id-T_{N})W^{(m)}\|_{L^{2}(\T)}\sum_{k\in\Z}\|\phi\|_{L^{2}(k,k+1)}\, ,
\end{align*}
which is bounded from above by
$$\|(Id-T_{N})W^{(m)}\|_{L^{2}_{T,x}}\|t\to\sum_{k\in\Z}\|\phi(t,\cdot)\|_{L^{2}(k,k+1)}\|_{L^{2}(0,T)}.$$
Therefore, for the second term we have the estimate
\begin{equation}
\label{perandcont}
C_{\phi,m}\Big(\|(Id-T_{N})V^{(m)}\|_{L^{2}([0,T],L^{2}(\R))}+\|(Id-T_{N})W^{(m)}\|_{L^{2}([0,T],L^{2}(\T))}\Big)
\end{equation}
which tends to zero as $N\to\infty$ by the definition of the Fourier cutoff operators and the Dominated Convergence Theorem. For the third term we have to consider the quantities
$$\Big|\int\int(V^{(m)})^{2}\overline{W^{(m)}-T_{N}W^{(m)}}\phi+\overline{T_{N}W^{(m)}}(V^{(m)}-T_{N}V^{(m)})(V^{(m)}+T_{N}V^{(m)})\phi\Big|.$$
Doing the same as for the previous term, we obtain an expression 
analog  to  \eqref{perandcont}. We treat the forth and fifth terms similarly. This allows us to choose $N_{0}=N_{0}(m)>0$ with the property
\begin{equation}
\label{argg15}
C_{\phi, m}\Big(\|(Id-T_{N})V^{(m)}\|_{L^{2}([0,T],L^{2}(\R))}+\|(Id-T_{N})W^{(m)}\|_{L^{2}([0,T],L^{2}(\T))}\Big)< \epsilon,
\end{equation}
for all $N\geq N_{0}$. 

For the last term of \eqref{nownowg} we need to observe two things. Firstly, by applying the iteration process (see also \cite{GKO}, \cite{CHKP} and \cite{NP}) that we described in Sections \ref{firststeps} and \ref{treenotation} we see that $\{G(W^{(m)},V^{(m)})\}_{m\in\mathbb N}$ is Cauchy in $S'((0,T)\times\R)$ as $m\to\infty$ for each fixed $N,$ since the sequences $V^{(m)},W^{(m)}$ are Cauchy in $C([0,T],H^{s_{1}}(\R))$ and $C([0,T],H^{s_{2}}(\T))$ respectively. Because the multipliers $m_{N}$ of $T_{N}$ are uniformly bounded we conclude that this convergence is uniform in $N$. 

Secondly, for fixed $N,$ $T_{N}V\in C([0,T],H^{\infty}(\R))$ and $T_{N}W\in C([0,T],H^{\infty}(\T))$ since $V\in H^{s_{1}}(\R), W\in H^{s_{2}}(\T)$ and the multiplier $m_{N}$ of $T_{N}$ is compactly supported. Hence 
$$G(T_{N}W,T_{N}V)=|T_{N}V|^{2}T_{N}V+(T_{N}W)^{2}\overline{T_{N}V}+(T_{N}V)^{2}\overline{T_{N}W}+2|T_{N}W|^{2}T_{N}V+2|T_{N}V|^{2}T_{N}W$$
makes sense as a function. Then we have to estimate the following five summands
$$\langle |T_{N}V^{(m)}|^{2}T_{N}V^{(m)}-|T_{N}V|^{2}T_{N}V,\phi\rangle+\langle (T_{N}W^{(m)})^{2}\overline{T_{N}V^{(m)}}-(T_{N}W)^{2}\overline{T_{N}V},\phi\rangle+$$
$$\langle (T_{N}V^{(m)})^{2}\overline{T_{N}W^{(m)}}-(T_{N}V)^{2}\overline{T_{N}W},\phi\rangle+2\langle |T_{N}W^{(m)}|^{2}T_{N}V^{(m)}-|T_{N}W|^{2}T_{N}V,\phi\rangle+$$
$$2\langle |T_{N}V^{(m)}|^{2}T_{N}W^{(m)}-|T_{N}V|^{2}T_{N}W,\phi\rangle.$$
The first term was estimated in \cite{CHKP} and \cite{NP}. For the second term we have to bound  
$$\Big|\int\int(T_{N}W^{(m)})^{2}\overline{(T_{N}(V^{(m)}-V))}\phi+\overline{T_{N}V}(T_{N}W^{(m)}-T_{N}W)(T_{N}W^{(m)}+T_{N}W)\phi\Big|\leq$$
$$\|T_{N}W^{(m)}\|^{2}_{L^{\infty}_{T,x}}\|T_{N}(V^{(m)}-V)\|_{L^{2}_{T,x}}\|\phi\|_{L^{2}_{T,x}}+$$
$$\int_{0}^{T}\sum_{k\in\Z}\ \int_{k}^{k+1}\Big|T_{N}V(T_{N}W^{(m)}-T_{N}W)(T_{N}W^{(m)}+T_{N}W)\phi\Big|.$$
The second expression is bounded from above by
$$\int_{0}^{T}\sum_{k\in\Z}\|T_{N}V\|_{L^{4}(k,k+1)}\|T_{N}(W^{(m)}-W)\|_{L^{4}(k,k+1)}\|T_{N}W^{(m)}+T_{N}W\|_{L^{4}(k,k+1)}\|\phi\|_{L^{4}(k,k+1)}$$
which is less than
$$\int_{0}^{T}\|T_{N}V\|_{L^{4}(\R)}\|T_{N}(W^{(m)}-W)\|_{L^{4}(\T)}\|T_{N}W^{(m)}+T_{N}W\|_{L^{4}(\T)}\sum_{k\in\Z}\|\phi\|_{L^{4}(k,k+1)}\leq$$
$$\|T_{N}V\|_{L^{4}_{T,x}}\|T_{N}W^{(m)}+T_{N}W\|_{L^{4}_{T,x}}\|T_{N}(W^{(m)}-W)\|_{L^{4}_{T,x}}\|t\to\sum_{k\in\Z}\|\phi\|_{L^{4}(k,k+1)}\|_{L^{4}(0,T)}.$$
Then we use H\"older's inequality in the interval $(0,T)$ to pass 
from the $L^{4}$ norm to the $L^{\infty}$ norm and in the space 
variable an application of Parseval's identity, together with the 
fact that the multiplier operators $T_{N}$ have compactly 
supported symbols $m_{N}$, implies the bound
$$C_{\phi,\|V\|_{X_{T}(\R)},\|W\|_{X_{T}(\T)}}M^{\frac34}T^{\frac34}\|W^{(m)}-W\|_{X_{T}(\T)}<\epsilon, $$
where the number $M=M(N)>0$ is chosen so that $\mbox{supp}m_{N}\subset[-M,M]$. For the third term we have to estimate the quantity
$$\Big|\int\int(T_{N}V^{(m)})^{2}\overline{(T_{N}(W^{(m)}-W))}\phi+\overline{T_{N}W}(T_{N}(V^{(m)}-V))(T_{N}V^{(m)}+T_{N}V)\phi\Big|,$$
for which similar bounds apply as for the previous term. The same holds for the forth and fifth terms. 

From these observations we derive that $G(T_{N}W^{(m)},T_{N}V^{(m)})\to G(T_{N}W,T_{N}V)$ in the space $S'((0,T)\times\R)$ as $m\to\infty$ uniformly in $N$. Equivalently, 
\begin{equation}
\label{argg16}
|\langle G(T_{N}W^{(m)},T_{N}V^{(m)})- G(T_{N}W,T_{N}V), \phi\rangle|<\epsilon,
\end{equation}
for all large $m$, uniformly in $N$. Therefore, \eqref{argg13} follows by choosing $m$ sufficiently large so that \eqref{argg14} and \eqref{argg16} hold, and then choosing $N_{0}=N_{0}(m)$ such that \eqref{argg15} holds. 

\end{proof}
Finally, we have shown that the function $V=V^{\infty}$ is a solution of NLS \eqref{nonperpe} in the sense of Definition \ref{def3}. 

\end{section}

\begin{section}{unconditional uniqueness of solutions}
\label{uniquenessofsol}
In this section we prove Theorem \ref{finth2}. Let us assume that the initial condition 
$u_{0}=v_{0}+w_{0}\in H^{s}(\R)+H^{\frac12+\epsilon}(\T)$ where 
$\frac16\leq s\leq\frac12$ and $\epsilon>0$. Notice that for such 
$s$ we have the embeddings $H^{s}(\R)\hookrightarrow L^{3}(\R)$ and 
$H^{\frac12+\epsilon}(\T)\hookrightarrow L^{\infty}(\T)$. 
Therefore, if $V$ is a solution of NLS \eqref{nonperpe} in 
$C([0,T], H^{s}(\R))$, then $V$ and hence 
$v=e^{it\partial_{x}^{2}}V$ are elements of 
$C([0,T], H^{s}(\R))\hookrightarrow C([0,T], L^{3}(\R))$. 
Similarly, for $W$ being a solution of NLS \eqref{perpe} in 
$C([0,T], H^{\frac12+\epsilon}(\T))$, we have  
$w=e^{it\partial_{x}^{2}}W\in C([0,T], H^{\frac12+\epsilon}(\T))\hookrightarrow C([0,T], L^{\infty}(\T))$. 

Therefore, the nonlinearity $G(w,v)$ makes sense as a function in $L^{1}(\R)+L^{2}(\R)$ since $|v|^{2}v\in L^{1}(\R), w^{2}\bar{v}, |w|^{2}v\in L^{3}(\R)\cap L^{2}(\R)$ and $v^{2}\bar{w}, |v|^{2}w\in L^{1}(\R)\cap L^{\frac32}(\R)$. 

As a consequence of this,  its box operator $\Box_{n}G(w,v)\in L^{2}(\R)$ and from the PDE  
\begin{equation}
\label{godd}
i\partial_{t}v_{n}=S(t)\Box_{n}G(S(-t)w,S(-t)v)\, ,
\end{equation}
which is true in the sense of distributions $(C^{\infty}([0,T],S(\R)))'$, we infer $\partial_{t}v_{n}\in C([0,T],L^{2}(\R))$. 

This, together with $v_{n}\in C([0,T], L^{2}(\R))$,  
already implies $v_{n}\in C^{1}([0,T],L^{2}(\R))$. Indeed, to obtain this it suffices to know that if two space--time distributions 
$S$ and $T\in (C^{\infty}([0,T],S(\R)))'$ have the same time derivatives, $\partial_{t}S=\partial_{t}T$, then there is distribution $c$, acting only on the space variable, such that $S=T+c$. This can be found, for example,  in \cite[Section 3.3]{Vlad}. 

Thus, we can rewrite the the PDE in the integral form 
\begin{equation}
\label{allell}
v_{n}=v_{n}(0)+i\int_{0}^{t}S(\tau)\Box_{n}G(S(-\tau)w,S(-\tau)v)\ d\tau\, ,
\end{equation}
which means that we can continue with the differentiation by parts technique, as it was described in Sections \ref{firststeps} and \ref{treenotation}, directly for the function $v$ without having to approximate it by smooth solutions, as done in the previous Section \ref{weakextended}. The next lemma justifies the interchange of time differentiation and space integration

\begin{lemma}
\label{didi}
Let $f(t,x),\partial_{t}f(t,x)\in C([0,T],L^{1}(\R^{d}))$ and define the distribution $\int_{\R^{d}}f(\cdot,x)dx$ by
$$\Big\langle \int_{\R^{d}}f(\cdot, x)dx, \phi\Big\rangle=\int_{\R}\int_{\R^{d}}f(t,x)\phi(t)dxdt,$$
with $\phi\in C^{\infty}_{c}(\R)$. Then, $\partial_{t}\int_{\R^{d}}f(\cdot,x)dx=\int_{\R^{d}}\partial_{t}f(\cdot,x)dx$.
\end{lemma}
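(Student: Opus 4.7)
The plan is to unfold the distributional derivative on the left--hand side, use Fubini to move the space integral to the outside, and then reduce the time--derivative statement to the one-dimensional Bochner-valued fundamental theorem of calculus, which the hypothesis $f,\partial_t f\in C([0,T],L^1(\R^d))$ furnishes automatically. The main point is that under these hypotheses $f:[0,T]\to L^1(\R^d)$ is genuinely $C^1$ in the Bochner sense, so
\eq{
f(t)=f(0)+\int_0^t \partial_\tau f(\tau)\, d\tau
}
holds as an identity in $L^1(\R^d)$. Therefore no difficulty arises from pointwise differentiability of $f(\cdot,x)$ in $t$ for fixed $x$; everything happens at the level of $L^1$-valued functions.

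Concretely, I would take an arbitrary $\phi\in C^\infty_c((0,T))$ and write
\eq{
\Big\langle \partial_t\!\!\int_{\R^d} f(\cdot,x)\,dx,\phi\Big\rangle
=-\int_0^T\!\!\int_{\R^d} f(t,x)\,\phi'(t)\,dx\,dt.
}
Since $t\mapsto \|f(t,\cdot)\|_{L^1}$ is continuous on the compact support of $\phi$, the integrand is in $L^1([0,T]\times\R^d)$, so Fubini permits interchanging the two integrations. Thus it suffices to identify
\eq{
\int_0^T f(t,\cdot)\phi'(t)\,dt = -\int_0^T \partial_t f(t,\cdot)\,\phi(t)\,dt
}
as an equality of elements of $L^1(\R^d)$.

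For this identity, I would substitute $f(t)=f(0)+\int_0^t\partial_\tau f(\tau)d\tau$ into the left--hand side. The $f(0)$ term contributes zero because $\int_0^T\phi'(t)\,dt=0$. For the remaining piece, a further application of Fubini (legitimate for Bochner integrals because $\partial_\tau f\in C([0,T],L^1)$ is uniformly bounded in norm on $\text{supp}(\phi)$) yields
\eq{
\int_0^T\phi'(t)\int_0^t\partial_\tau f(\tau)\,d\tau\,dt
=\int_0^T\partial_\tau f(\tau)\big(\phi(T)-\phi(\tau)\big)d\tau
=-\int_0^T\partial_\tau f(\tau)\,\phi(\tau)\,d\tau,
}
using $\phi(T)=0$. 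Plugging this back, integrating over $x$, and once more invoking Fubini on the right, one recovers $\langle\int_{\R^d}\partial_t f(\cdot,x)\,dx,\phi\rangle$, which is the desired conclusion.

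The only place where one could stumble is verifying the Bochner-Fubini step, but this is standard once one records that $\partial_\tau f$ is continuous with values in $L^1(\R^d)$, hence strongly measurable and essentially bounded on compact time intervals; in particular $(\tau,t,x)\mapsto\partial_\tau f(\tau,x)\phi'(t)\mathbf 1_{\{\tau\le t\}}$ is integrable on $[0,T]^2\times\R^d$. No heavy machinery beyond this is needed, so the lemma follows essentially from the definition of the distributional derivative together with a clean application of Fubini and the Bochner fundamental theorem of calculus.
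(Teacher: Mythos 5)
Your proof is correct and follows essentially the same route as the paper: unfold the distributional derivative, apply Fubini to swap the $t$- and $x$-integrations, integrate by parts in $t$, and apply Fubini once more. The only difference is one of explicitness: where the paper simply invokes ``the definition of the weak derivative of $f$'' to justify $-\int_{\R}f(t,x)\phi'(t)\,dt=\int_{\R}\partial_{t}f(t,x)\phi(t)\,dt$ pointwise in $x$, you derive that identity from the Bochner-valued fundamental theorem of calculus $f(t)=f(0)+\int_0^t\partial_\tau f(\tau)\,d\tau$, which is a cleaner justification of the same step.
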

\begin{proof}
By definition
$$\Big\langle\partial_{t}\int_{\R^{d}}f(\cdot,x)dx,\phi\Big\rangle=-\Big\langle\int_{\R^{d}}f(\cdot,x)dx,\phi'\Big\rangle=-\int_{\R}\int_{\R^{d}}f(t,x)\phi'(t)dxdt$$
and, since $f\in C([0,T],L^{1}(\R^{d}))$, we can change the order of integration by Fubini's Theorem to obtain
$$-\int_{\R^{d}}\int_{\R}f(t,x)\phi'(t)dtdx=\int_{\R^{d}}\int_{\R}\partial_{t}f(t,x)\phi(t)dtdx=\int_{\R}\int_{\R^{d}}\partial_{t}f(t,x)\phi(t)dxdt,$$
where in the first equality we used the definition of the weak derivative of $f$ and in the second equality Fubini's Theorem with the fact that $\partial_{t}f\in C([0,T],L^{1}(\R^{d}))$. The last integral is equal to 
$$\Big\langle\int_{\R^{d}}\partial_{t}f(\cdot,x)dx,\phi\Big\rangle$$
and the proof is complete.
\end{proof}
Consider now the expressions \eqref{ttr1}, \eqref{ttr4} and \eqref{ttr5} for fixed $n$ and $\xi$. We want to apply Lemma \ref{didi} to each one of the following functions
$$f_{1}(t,\xi_{1},\xi_{3})=\sigma_{n}(\xi)\frac{e^{-2it(\xi-\xi_{1})(\xi-\xi_{3})}}{-2i(\xi-\xi_{1})(\xi-\xi_{3})}\ \hat{v}_{n_{1}}(\xi_{1})\hat{\bar{v}}_{n_{2}}(\xi-\xi_{1}-\xi_{3})\hat{v}_{n_{3}}(\xi_{3}),$$
$$f_{2}(t, \xi_{1})=\sigma_{n}(\xi)w_{n_{3}}\frac{e^{-2it(\xi-n_{3})(\xi-\xi_{1})}}{-2i(\xi-n_{3})(\xi-\xi_{1})}\hat{v}_{n_{1}}(\xi_{1})\hat{\bar{v}}_{n_{2}}(\xi-\xi_{1}-n_{3}),$$
$$f_{3}(t,\xi_{1})=\sigma_{n}(\xi)\bar{w}_{n_{2}}\frac{e^{-2it(\xi-\xi_{1})(\xi_{1}-n_{2})}}{-2i(\xi-\xi_{1})(\xi_{1}-n_{2})}\hat{v}_{n_{1}}(\xi_{1})\hat{v}_{n_{3}}(\xi-\xi_{1}+n_{2}),$$
where $\xi\approx n, \xi_{1}\approx n_{1},\xi_{3}\approx n_{3}, \xi-\xi_{1}-\xi_{3}\approx -n_{2}$ and $(n,n_{1},n_{2},n_{3})\in A_{N}(n)^{c}$ 
given by \eqref{idid}. With the use of Young's inequality and the fact that for all 
$n$, $\hat{v}_{n}, \partial_{t}\hat{v}_{n}$ are compactly supported functions in $L^{2}(\R)$, 
it is not hard to obtain that $f_{1},\partial_{t}f_{1}\in C([0,T],L^{1}(\R^2))$ and $f_{2},f_{3},\partial_{t}f_{2},\partial_{t}f_{3}\in C([0,T],L^{1}(\R))$. Thus, for $f_{1}$, and  similarly for $f_{2},f_{3}$,  
\begin{multline*}
\partial_{t}\Big[\int_{\R^2}\sigma_{n}(\xi)\frac{e^{-2it(\xi-\xi_{1})(\xi-\xi_{3})}}{-2i(\xi-\xi_{1})(\xi-\xi_{3})}\ \hat{v}_{n_{1}}(\xi_{1})\hat{\bar{v}}_{n_{2}}(\xi-\xi_{1}-\xi_{3})\hat{v}_{n_{3}}(\xi_{3})d\xi_{1}d\xi_{3}\Big] \\
=\int_{\R^2}\sigma_{n}(\xi)\partial_{t}\Big[\sigma_{n}(\xi)\frac{e^{-2it(\xi-\xi_{1})(\xi-\xi_{3})}}{-2i(\xi-\xi_{1})(\xi-\xi_{3})}\ \hat{v}_{n_{1}}(\xi_{1})\hat{\bar{v}}_{n_{2}}(\xi-\xi_{1}-\xi_{3})\hat{v}_{n_{3}}(\xi_{3})\Big]d\xi_{1}d\xi_{3} \\
=\int_{\R^2}\sigma_{n}(\xi)\partial_{t}\Big[\frac{e^{-2it(\xi-\xi_{1})(\xi-\xi_{3})}}{-2i(\xi-\xi_{1})(\xi-\xi_{3})}\Big]\hat{v}_{n_{1}}(\xi_{1})\hat{\bar{v}}_{n_{2}}(\xi-\xi_{1}-\xi_{3})\hat{v}_{n_{3}}(\xi_{3})d\xi_{1}d\xi_{3} \\
+\int_{\R^2}\sigma_{n}(\xi)\frac{e^{-2it(\xi-\xi_{1})(\xi-\xi_{3})}}{-2i(\xi-\xi_{1})(\xi-\xi_{3})}\partial_{t}\Big[\hat{v}_{n_{1}}(\xi_{1})\hat{\bar{v}}_{n_{2}}(\xi-\xi_{1}-\xi_{3})\hat{v}_{n_{3}}(\xi_{3})\Big]d\xi_{1}d\xi_{3}.
\end{multline*}
In the second equality we used the product rule which is applicable since $\hat{v}_{n}\in C^{1}([0,T],L^{2}(\R))$.

Finally it remains to justify the interchange of differentiation in time and summation in the discrete variable but this is done in exactly the same way as in \cite{GKO} (Lemma $5.1$). Similar arguments justify the interchange on the $J$th step of the infinite iteration procedure. 

Thus, we obtain the following expression in $C([0,T],H^{s}(\R))$ for the solution $v$ of NLS \eqref{godd} with initial data $v_{0}$
\begin{equation} 
\label{antt1}
v=\Gamma_{v_{0}}v+\lim_{J\to\infty}\int_{0}^{t}N_{2}^{(J+1)}(v)d\tau,
\end{equation}
where the limit is an element of $C([0,T],H^{s}(\R))$. Its existence follows from the fact that the operators $\Gamma_{v_{0}}^{(J)}v$ converge to $\Gamma_{v_{0}}v$ in the norm of $C([0,T],H^{s}(\R))$ as $J\to\infty$. The important estimate about the remainder operator $N_{2}^{(J)}$ is the following

\begin{lemma}
\label{finafinafina}
$$\lim_{J\to\infty}\|N_{2}^{(J)}(v)\|_{l^{\infty}L^{2}(\R)}=0.$$
\end{lemma}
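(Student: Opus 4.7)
The plan is to adapt the scheme of Lemma \ref{dankda} to the weaker outer norm $l^\infty_n$. The key gain is that, without an outer sum over $n$, we avoid the Young-type convolution bounds which in Lemma \ref{dankda} forced the stronger assumptions $v\in M_{2,1}(\R)$ and $\{w_m\}\in l^1(\Z)$; here the embeddings $H^s(\R)\hookrightarrow L^2(\R)$ (for $s\ge\frac16$) and $H^{\frac12+\epsilon}(\T)\hookrightarrow L^2(\T)$ will suffice.

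For each fixed $n\in\Z$, I would use the representation \eqref{fina4} of $N_2^{(J)}(v)(n)$ as a sum over colored trees $T\in C(J-1)$, terminals $\alpha\in T^\infty$, and index functions $\mathbf n\in\mathcal R(T)$ with $\mathbf n_r=n$. Lemma \ref{indu} applied to $\tilde q^{J-1,t}_{T,\mathbf n}$ together with Remark \ref{expl} applied to $\mathbf N_1^{t,\alpha}$ dominates each summand by $\frac{1}{|\hat\mu_{T_k^{J-1}}|}\prod_\beta f_\beta(n_\beta)$, where the product runs over the $2J+1$ effective leaves (the terminals of $T_k^{J-1}$, with $\alpha$ split into the three children from $\mathbf N_1^{t,\alpha}$) and $f_\beta$ equals $\|v_{n_\beta}\|_{L^2(\R)}$ or $|w_{n_\beta}|$ according to colour. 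Cauchy-Schwarz in the $\mathbf n$-sum then yields
\eq{
\sum_{\substack{\mathbf n\in\mathcal R(T)\\ \mathbf n_r=n}}\frac{\prod f_\beta(n_\beta)}{\prod_{k=1}^{J-1}|\tilde\mu_k|}\le\Big(\sum\prod_{k=1}^{J-1}|\tilde\mu_k|^{-2}\Big)^{1/2}\Big(\sum\prod_\beta f_\beta(n_\beta)^2\Big)^{1/2},
}
and the first factor is controlled by $\lesssim(2J-1)!!^{-3/2}$ via the lower bounds on $|\tilde\mu_j|$ encoded in $\mathcal R(T)\cap C_j^c$ together with the number-of-divisors bound \eqref{num}, in exactly the manner of Lemma \ref{dankda}.

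For the second factor, the key observation is that fixing $\mathbf n_r=n$ leaves $2J$ independent terminal indices (the $3J+1$ total indices are cut down by the $J$ internal-node relations and the one root constraint). Choosing one ``dependent'' terminal $\beta_0$ and applying $\|f_{\beta_0}\|_{l^\infty}\le\|f_{\beta_0}\|_{l^2}$, the remaining $2J$ sums factorise and give
\eq{
\Big(\sum_{\substack{\mathbf n\in\mathcal R(T)\\ \mathbf n_r=n}}\prod_\beta f_\beta(n_\beta)^2\Big)^{1/2}\lesssim\prod_\beta\|f_\beta\|_{l^2}\lesssim\big(\|v\|_{L^2(\R)}+\|w\|_{L^2(\T)}\big)^{2J+1}.
}
Summing over the $N(J-1)\le 5^{J-1}(2J-3)!!$ colored trees (Lemma \ref{hopcor}) and the $2J-1$ terminals $\alpha$ in each, one arrives at
\eq{
\|N_2^{(J)}(v)\|_{l^\infty L^2(\R)}\lesssim\frac{C^J}{(2J-1)!!^{1/2}}\big(\|v\|_{L^2(\R)}+\|w\|_{L^2(\T)}\big)^{2J+3},
}
which tends to zero as $J\to\infty$ by Stirling's formula.

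The main obstacle will be the bookkeeping in the third step: one must parameterise the index set $\mathcal R(T)$ with $\mathbf n_r=n$ fixed using $2J$ independent terminal indices in a way that respects both the ``$\approx$''-constraints at black internal nodes and the ``$=$''-constraints at red internal nodes, in order to legitimise the factorisation $\sum\prod f_\beta^2\lesssim\prod\|f_\beta\|_{l^2}^2$ without either over-counting or losing the factorial decay. Once this is set up correctly, all remaining manipulations are routine $L^2$ estimates and combinatorial bookkeeping of trees.
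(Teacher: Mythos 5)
Your high-level strategy is sound: passing to $l^\infty_n$ does eliminate the Young-type convolution step that in Lemma~\ref{dankda} forced the $M_{2,1}$ and $l^1$ hypotheses, and the observation that $\mathbf n_r = n$ makes one terminal dependent—so that one $l^2\hookrightarrow l^\infty$ bound plus a product of $l^2$ norms closes the factorisation—is exactly the right device for the first $J-1$ levels. The gap is at the $J$-th level, where $\mathbf N_1^{t,\alpha}$ sits. Once you expand $\mathbf N_1^{t,\alpha}$ via Remark~\ref{expl} into three new leaves $\alpha_1,\alpha_2,\alpha_3$, you are effectively summing over an extended tree with $J$ internal nodes, but Lemma~\ref{indu} supplies only the denominator $\prod_{k=1}^{J-1}|\tilde\mu_k|$: there is no $1/\tilde\mu_J$ factor, because $\mathbf N_1^{t,\alpha}$ is a $Q$-operator, not a $\tilde Q$-operator. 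The Cauchy--Schwarz first factor $\sum\prod_{k=1}^{J-1}|\tilde\mu_k|^{-2}$, taken over all $2J$ free indices including the two new ones at the $\alpha$-level, therefore diverges: the constraint from $C_{J-1}^c$ is only a \emph{lower} bound on $|\tilde\mu_J|$ and gives no summability in $(n_{\alpha_1},n_{\alpha_3})$. Regrouping the other way does not help either: you would then need $\big\|\sum_{n_\alpha\approx n_{\alpha_1}-n_{\alpha_2}+n_{\alpha_3}}f_{\alpha_1}f_{\alpha_2}f_{\alpha_3}\big\|_{l^\infty_{n_\alpha}}$, and a three-fold convolution of $l^2$ sequences is not in $l^\infty$ in general, so your claimed final bound $\lesssim C^J(2J-1)!!^{-1/2}(\|v\|_{L^2}+\|w\|_{L^2})^{2J+3}$ cannot be obtained from $L^2$ data alone.

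This is precisely where the hypotheses $s\ge\tfrac16$ and $\epsilon>0$ must enter, and your claim that the embeddings $H^s(\R)\hookrightarrow L^2(\R)$, $H^{\frac12+\epsilon}(\T)\hookrightarrow L^2(\T)$ suffice is not correct. The paper's route (following \cite{NP}, Lemma~28) is to \emph{not} expand $\mathbf N_1^{t,\alpha}$: one keeps $h_\alpha(n_\alpha)\coloneqq\|\mathbf N_1^{t,\alpha}(\ldots)(n_\alpha)\|_{L^2}$ (resp.\ $|\mathscr N_1^t(w)(n_\alpha)|$ in the red case), uses \eqref{mainmain} (resp.\ \eqref{nperpe}) to write it as $\partial_t v_{n_\alpha}$ minus resonant terms, and then observes that $\|\partial_t v_{n_\alpha}\|_{L^2}=\|\Box_{n_\alpha}G(\cdot,\cdot)\|_{L^2}$ is bounded uniformly in $n_\alpha$ because $G(w,v)\in L^1(\R)+L^2(\R)$; this last fact requires $H^s(\R)\hookrightarrow L^3(\R)$ and $H^{\frac12+\epsilon}(\T)\hookrightarrow L^\infty(\T)$, i.e.\ $s\ge\tfrac16$ and $\epsilon>0$. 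With a uniform bound on $h_\alpha$ in hand, the Cauchy--Schwarz is taken only over the index set of the original tree $T\in C(J-1)$, the first factor is finite and delivers the factorial decay, and in the second factor $\alpha$ is made the dependent terminal and $h_\alpha$ bounded in $l^\infty$. The resulting estimate therefore carries three powers of $\|v\|_{H^s}+\|w\|_{H^{1/2+\epsilon}}$ from $G$, rather than $\|v\|_{L^2}+\|w\|_{L^2}$ as you have written.
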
 
The proof is very similar to the one given in \cite{NP}, Lemma 28, where we have to consider the cases $\partial_{t}v_{n}, \partial_{t}w_{n}$ with similar arguments. This lemma implies that $\lim_{J\to\infty}\int_{0}^{t}N_{2}^{(J+1)}(v)d\tau$ is equal to $0$ in $X(T)=C([0,T],H^{s}(\R))$. From this we obtain the uniqueness of NLS \eqref{godd} since if there are two solutions $v_{1}$ and $v_{2}$ with the same initial datum $v_{0}$ we obtain by \eqref{argg10}
$$\|v_{1}-v_{2}\|_{X_{T}}=\|\Gamma_{v_{0}}v_{1}-\Gamma_{v_{0}}v_{2}\|_{X_{T}}\lesssim\|v_{0}-v_{0}\|_{H^{s}(\R)}=0.$$

\textbf{Acknowledgments}: The authors gratefully acknowledge financial support by the Deu\-tsche Forschungs\-gemeinschaft (DFG) through CRC 1173. Wholehearted thanks also go to Vadim Zharnitsky for piquing our interest in the (missing) tooth problem for NLS. 

\end{section}

\end{document}